\title{Random walks on stochastic hyperbolic half planar triangulations}
\author{Omer Angel \and Asaf Nachmias \and Gourab Ray}
\date{\small \today}
  \crefname{theorem}{Theorem}{Theorems}
  \crefname{thm}{Theorem}{Theorems}
  \crefname{lemma}{Lemma}{Lemmas}
  \crefname{lem}{Lemma}{Lemmas}
  \crefname{remark}{Remark}{Remarks}
  \crefname{prop}{Proposition}{Propositions}
  \crefname{defn}{Definition}{Definitions}
  \crefname{corollary}{Corollary}{Corollaries}
  \crefname{section}{Section}{Sections}
  \crefname{figure}{Figure}{Figures}
\newtheorem{thm}{Theorem}[section]
\newtheorem{lemma}[thm]{Lemma}
\newtheorem{corollary}[thm]{Corollary}
\newtheorem{prop}[thm]{Proposition}
\newtheorem{defn}[thm]{Definition}
\numberwithin{equation}{section}
\renewcommand{\P}{\mathbb P}
\newcommand{\Z}{\mathbb Z}
\newcommand{\E}{\mathbb E}
\newcommand{\R}{\mathbb R}
\newcommand{\eps}{\varepsilon}
\newcommand{\wH}{\widetilde{\mathbb H}}
\renewcommand{\H}{\mathbb H}
\newcommand{\F}{\mathbb F}
\newcommand{\bP}{\mathbf{P}}
\newcommand{\cE}{\mathcal{E}}
\newcommand{\cP}{\mathcal{P}}
\newcommand{\lf}{\lfloor}
\newcommand{\rf}{\rfloor}
\newcommand{\A}{\mathcal{A}}
\newcommand{\Cut}{\operatorname{Cut}}
\begin{document}
\maketitle
\abstract{We study the simple random walk on stochastic hyperbolic half planar triangulations constructed in \cite{AR13}. We show that almost surely the walker escapes the boundary of the map in positive speed and that the return probability to the starting point after $n$ steps scales like $\exp(-cn^{1/3})$.}

\section{Introduction}\label{sec:intro}

In this paper we study the behavior of the simple random walk on random half planar hyperbolic triangulations. The latter are probability measures on rooted half planar maps satisfying two natural properties, translation invariance and the domain Markov property. In \cite{AR13} these measures were constructed and characterized as a one parameter family $\H_\alpha$ where $\alpha\in[0,1)$ is the probability that the face containing the root edge has an internal vertex. See further definitions below.

In \cite{Ray13} it is proved that the geometry of the map exhibits a phase transition at the value $\alpha=2/3$. When $\alpha<2/3$ the map almost surely has quadratic volume growth, infinitely many cut-sets of bounded size and the random walker is in the ``Alexander-Orbach'' regime, that is, after $t$ steps it is typically at distance $t^{1/3}$ (the same behavior as in random trees \cite{kestensubdiff, BK06} and in high-dimensional critical percolation \cite{GN09}). When $\alpha>2/3$ the map is almost surely ``hyperbolic'' in the sense that it has exponential volume growth and positive anchored expansion. The hyperbolic regime is the focus of the current paper.

Since these maps are not sufficiently regular (namely, they are not transitive and have unbounded degrees) one cannot apply standard tools (such as \cite{B00}) to study basic properties the random walk such as its speed and return probabilities. Indeed, a special treatment, which employs the inherent randomness of the map, is needed. In this paper we show that in the hyperbolic phase $\alpha\in (2/3,1)$ the distance of the random walker from the boundary grows linearly (in particular, its speed is positive) and that the return probabilities follow a stretched exponential law $\exp(-cn^{1/3})$.

\begin{thm}\label{T:speed}
  Fix $\alpha\in(2/3,1)$ and let $H$ be a random half planar triangulation
  with law $\H_\alpha$ with boundary $\partial H$. Consider the simple random walk
  $X_n$ on $H$ and write $d^H(x,y)$ for the graph distance between $x$ and
  $y$ in $H$. Then almost surely we have
  \begin{equation}\label{posspeed}
    \liminf_n \frac{d^H(X_n,\partial H)}{n} > 0.\nonumber
  \end{equation}
\end{thm}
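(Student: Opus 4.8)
The plan is to exploit the domain Markov property of $\H_\alpha$ together with the known hyperbolic features from \cite{Ray13} (exponential growth and positive anchored expansion) to show that the walk, viewed through its distance from $\partial H$, has a positive drift that cannot be overcome by the fluctuations. The first step is to set up a suitable notion of ``peeling'' or exploration revealing the map layer by layer away from the boundary, and to observe that the distance process $D_n := d^H(X_n,\partial H)$ is a submartingale-like quantity up to controllable corrections: at each step of the walk the number of neighbors of the current vertex that lie strictly closer to the boundary is, on average over the randomness of the environment, a definite fraction smaller than the number of neighbors lying strictly farther, because the local picture around a typical vertex in the bulk looks like $\H_\alpha$ re-rooted and the hyperbolicity forces an outward bias. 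Making this precise requires a stationarity statement for the environment seen from the walker; I would either invoke an existing reversibility/stationarity result for these maps or build the environment-seen-from-the-particle chain directly, using unimodularity of the local limit.

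\medskip

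Second, I would turn the one-step outward bias into a linear lower bound on $D_n$ by a standard argument: either (a) show $D_n - \delta n$ is a submartingale for some $\delta>0$ once we pass to the stationary environment, and conclude by the ergodic theorem / Azuma-type concentration that $\liminf D_n/n \geq \delta$ almost surely; or (b) compare the walk to a biased random walk on $\Z_{\geq 0}$ by a coupling that uses the positive anchored expansion to guarantee that from any ``interface'' separating the walker from the boundary there is a uniformly positive conductance leading outward. Approach (a) is cleaner if the stationarity of the environment is available; approach (b) is more robust but requires careful bookkeeping of the conductances across level sets of the distance-to-boundary function, which by \cite{Ray13} grow exponentially, so that the effective resistance from $\partial H$ to infinity is finite and hence the walk is transient away from the boundary with positive speed.

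\medskip

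A subtle point to address is the unbounded degrees: a single step of the simple random walk can move the walker by only $1$ in graph distance, but the \emph{time} spent near the boundary could a priori be inflated by high-degree traps. Here I would use that in $\H_\alpha$ the degree of the walker's current vertex has an exponential (or at least sufficiently light) tail under the stationary measure — this follows from the explicit description of the peeling process and the geometric nature of the face-degree / vertex-degree distributions — so that the Cesàro averages of the local drift are well-defined and the walk does not get stuck. Combined with the fact that the boundary $\partial H$ has, in the hyperbolic phase, a vanishing ``density'' as seen from a deep bulk vertex (the horodistance to $\partial H$ is typically large and the return to $\partial H$ is exponentially unlikely once the walker is deep), this rules out the pathological scenario $\liminf D_n/n = 0$.

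\medskip

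The main obstacle, I expect, is establishing the quantitative outward drift: there is no transitivity, so ``the environment seen from the walker'' must be constructed and shown to be stationary (and ergodic) for these half-planar maps, and one must extract from \cite{Ray13}'s anchored expansion a genuinely \emph{local} (one-step, in-expectation) bias rather than merely an isoperimetric inequality about large sets. Bridging ``large-scale isoperimetry $\Rightarrow$ positive speed'' typically goes through transience and an entropy or Kaimanovich-type argument, but to get $\liminf D_n/n>0$ (linear escape from the boundary, not just from the root) one really wants the reversibility of the walk on the map together with the exponential growth of the level sets of $d^H(\cdot,\partial H)$; assembling these ingredients — stationarity of the environment, exponential growth of distance-levels, light-tailed degrees — into a single submartingale argument is where the real work lies.
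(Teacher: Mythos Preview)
Your outline has a genuine gap, and you have in fact located it yourself: the ``environment seen from the walker'' is not known to be stationary for the \emph{half}-plane maps $\H_\alpha$, and without that, your submartingale/drift argument for $D_n$ does not get off the ground. Translation invariance of $\H_\alpha$ is only along the boundary, and the domain Markov property lets you re-root on $\partial H$, not at an interior vertex; so neither of your proposed routes to stationarity (``invoke reversibility/stationarity'' or ``unimodularity of the local limit'') is available here. Your fallback (b), extracting a one-step outward bias from anchored expansion, also fails in this generality: as the introduction explicitly warns, there exist graphs with anchored expansion, upper exponential growth and unbounded degrees but zero speed, so large-scale isoperimetry alone cannot produce the local drift you want.

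The paper circumvents exactly this obstacle by importing stationarity from the \emph{full}-plane hyperbolic triangulation $\F_\kappa$ of Curien, for which positive speed is already known. The argument runs: (i) using Vir\'ag's island/ocean decomposition together with exponential tails on island sizes, obtain the weak bound $d^H(X_n,\rho)\gtrsim n/\log^3 n$ (this part uses only anchored expansion and needs no stationarity); (ii) feed this into the Carne--Varopoulos inequality to show the walk visits $\partial H$ only finitely often; (iii) couple $H$ as a submap of $F$ so that the walks on $H$ and on $F$ coincide until $\partial H$ is hit, and conclude from positive speed on $F$ that $\liminf d^H(X_n,\rho)/n\ge s>0$; (iv) apply Carne--Varopoulos once more, against an exponentially-small strip near $\partial H$, to upgrade ``distance from $\rho$'' to ``distance from $\partial H$''. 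None of your proposed ingredients --- environment-seen-from-the-particle, Azuma, or a biased-walk comparison on $\Z_{\ge0}$ --- appears; the coupling with $\F_\kappa$ is the missing idea.
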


\begin{thm}\label{T:return_prob}
  Fix $\alpha \in (2/3,1)$. Given the map $H$ with law $\H_\alpha$, let $\bP_H$ denote the law of a simple
  random walk on $H$ starting from $\rho$. There are positive constants $c,C$ depending only on
  $\alpha$ so that $\H_\alpha$-a.s.\ for all large enough $n$
  \begin{equation}\label{returnprob}
    e^{-C n^{1/3}} \leq
    \bP_H(X_n = \rho)
    \leq e^{-c n^{1/3}}.\nonumber
  \end{equation}
\end{thm}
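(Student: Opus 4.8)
The plan is to prove the two inequalities by quite different means. The upper bound is a consequence of the positive anchored expansion of $H$ (established in \cite{Ray13}), via the circle of ideas around Vir\'ag's theorem that a graph with positive anchored expansion has return probability at most $\exp(-cn^{1/3})$ — the exponent $1/3$ being the one generic to anchored expanders. Writing $p_{2m}(\rho,\rho)=\deg(\rho)\sum_x p_m(\rho,x)^2/\deg(x)$, one controls the isoperimetric (``evolving set'') profile seen by the walk: anchored expansion forces the relevant sets to grow, and because the bound on $|\partial S|/|S|$ only kicks in for large $S$, the accounting produces a factor $m^{1/3}$ rather than $m$ in the exponent. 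Two points need attention beyond the classical bounded-degree statement. First, $\H_\alpha$ has unbounded degrees; I would handle this using the exponentially decaying degree tails of $\H_\alpha$ (from \cite{AR13}), controlling the contribution of atypically large degrees, and using Theorem~\ref{T:speed} to restrict attention to the part of $H$ the walk actually sees. Second, one wants deterministic constants and the conclusion for all large $n$: the anchored expansion constant of $H$ is a.s.\ equal to a constant depending only on $\alpha$, and the ``for all large $n$'' part is then automatic since a bound $p_n(\rho,\rho)\le C(\omega)e^{-cn^{1/3}}$ with a random constant already suffices.

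For the lower bound I would build, for every scale $k$, a trapping structure near $\rho$ out of the peeling exploration and then optimise over $k$. With probability at least $e^{-ck}$ the perimeter process of the peeling performs a long ``sub-hyperbolic'' excursion near the root, which forces the exploration to reveal, within distance $O(k)$ of $\rho$, a finite submap $P$ of metric diameter $\asymp k$ whose internal geometry is that of a near-critical (subcritical half-planar / Boltzmann) map of scale $k$; in particular $P$ has volume of order $k^2$, and the simple random walk inside $P$ is in the Alexander--Orbach regime, travelling distance of order $t^{1/3}$ in time $t$ and returning to its starting point with only polynomially small probability — this is exactly the behaviour proved for the $\alpha<2/3$ maps in \cite{Ray13}, which one must transfer to $P$. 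On this event the walk started at $\rho$, with probability at least $e^{-Ck}$, reaches $P$ in $O(k)$ steps, then spends $\asymp k^3$ steps inside $P$ (which happens with probability bounded below, since $k^3$ is precisely the time at which a $t^{1/3}$-displacement walk first reaches distance $k$, hence the boundary of $P$), and is back at $\rho$ at time $n$ with probability at least a negative power of $n$. Choosing $k\asymp n^{1/3}$ makes $k^3\asymp n$ and balances everything, yielding $\bP_H(X_n=\rho)\ge e^{-C'n^{1/3}}$. To upgrade this to the quenched statement for all large $n$ I would run the construction at dyadic scales and apply a second-moment / Borel--Cantelli argument over the $\asymp e^{ck}$ disjoint candidate sites for such a pocket inside $B^H(\rho,O(k))$ (their number outgrows the reciprocal of the per-site probability), concluding that a.s.\ a pocket of scale $\asymp r$ occurs within distance $O(r)$ of $\rho$ for every large $r$.

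The hard part is the lower bound, and within it the peeling estimate: one must show that the relevant excursion of the perimeter process has probability $e^{-\Theta(k)}$ — exponentially small in the \emph{scale} $k$, not in the volume $\asymp k^2$ of the pocket it creates, this being exactly what turns the exponent into $1/3$ rather than $2/3$ — and that it genuinely produces a region with Alexander--Orbach geometry, to which the quantitative random walk estimates of \cite{Ray13} then have to be ported (including the lower bound on the probability of staying inside $P$ long enough, and control of the walk's excursions between $P$ and the hyperbolic bulk). The passage from the annealed bound to the a.s.\ statement is a further, more routine, step, as is the unbounded-degree adaptation needed for the upper bound.
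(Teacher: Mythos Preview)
Your upper bound is in the right spirit but less direct than the paper's. The paper does not use evolving sets or degree-tail control; it exploits Vir\'ag's island decomposition together with the sinking lemma (\cref{prop:sinking}). With $i=i(\alpha)$ as in \cref{lem:i-isolated} and $\eps=n^{-1/3}\le i$, every $i$-island of size at most $1/\eps=n^{1/3}$ lies in the $\eps$-ocean, so on the event $B_n^c$ that the walk meets no $i$-island of size exceeding $n^{1/3}$ before time $n$, the walks on $H$ and on the induced weighted graph $H_\eps$ agree up to time $n$. Since $H_\eps$ has Cheeger constant at least $\eps$ (\cref{prop:cheeger_weighted}), its $n$-step return probability is at most $(1-\eps^2/2)^n\le e^{-n^{1/3}/2}$, while $\P(B_n)\le Cne^{-cn^{1/3}}$ by \cref{lem:largest_island}. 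This handles unbounded degrees for free and needs neither \cref{T:speed} nor any degree-tail input; Markov and Borel--Cantelli then give the quenched statement.

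Your lower bound plan is considerably more involved than the paper's, and has a soft spot you should be aware of. The regions swallowed by a peeling step in $\H_\alpha$ are free triangulations with parameter $\alpha\beta<2/27$, i.e.\ strictly \emph{subcritical} Boltzmann maps: a swallowed $(k{+}1)$-gon typically has volume of order $k$ and small diameter, not volume $k^2$ with Alexander--Orbach geometry. Producing a genuinely near-critical pocket would require a tilted-measure argument on the peeling process and then a transfer of the $\alpha<2/3$ random-walk estimates to that pocket --- conceivable, but far from routine, and you have correctly flagged it as the hard part.

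The paper bypasses all of this with a one-dimensional trap: $n{+}1$ nested triangles, each inside the previous, with prescribed edges between consecutive levels (\cref{fig:trap}). This trap has only $O(n)$ vertices and faces, so by \cref{lem:prob} the probability that a given peeling step of type $(R,2)$ encloses exactly this configuration is $e^{-\Theta(n)}$ with no excursion estimate needed --- there is simply no tension between ``scale $n$'' and ``volume $n^2$'' because the trap has linear volume. The walk inside projects to a lazy simple random walk on $\{0,\dots,n\}$, for which staying in $[1,n]$ for $t\asymp n^3$ steps and returning to level $1$ has probability $e^{-ct/n^2}=e^{-\Theta(n)}$ by an elementary computation (\cref{SRWestimate}). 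Exponential growth of $|B_r|$ then guarantees such a trap within $B_{Cn}$ with probability $1-e^{-cn}$ (\cref{lem:revealing_trap}), and a separate lemma controlling average degrees along paths (\cref{lem:gamma_bad}, proved via a cut-open change of measure) gives probability at least $e^{-cn}$ of walking from $\rho$ to the trap and back. Borel--Cantelli finishes.
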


\subsection{Half planar maps}

Recall that a \textbf{planar map} is a proper embedding (that is, with no crossing edges) of a connected (multi) graph on the sphere viewed up to orientation preserving homeomorphisms from the sphere to itself.
Connected components of the complement of the embedding are called \textbf{faces}.
We shall focus maps with a \textbf{boundary}, that is one face is marked as the external face and the edges and vertices incident to it form the boundary of the map. Vertices that are not on the boundary are called \textbf{internal vertices}. In this paper the boundary will always be simple, that is, the boundary edges and vertices form a simple cycle or a bi-infinite simple path. A \textbf{triangulation} is a map where every face has precisely three edges except possibly an external face. If the external face of a triangulation is a simple cycle with $p$-edges, we say it is a \textbf{triangulation of a $p$-gon.} \textbf{Half planar triangulations} are triangulations which are locally finite, one-ended (that is, the removal of any finite set of vertices results in precisely one infinite cluster) and have a bi-infinite simple boundary. In other words, these triangulations can be embedded such that the union of all vertices, edges and faces which are not the external face equals $\R \times \R_+$.
All our maps are \textbf{rooted}, that is, an oriented edge is specified as the root. In a half planar map, the root is always on the boundary and is oriented in a way such that the external face is to the right of the root.

\begin{figure}[t]
\centering{\includegraphics[scale=1]{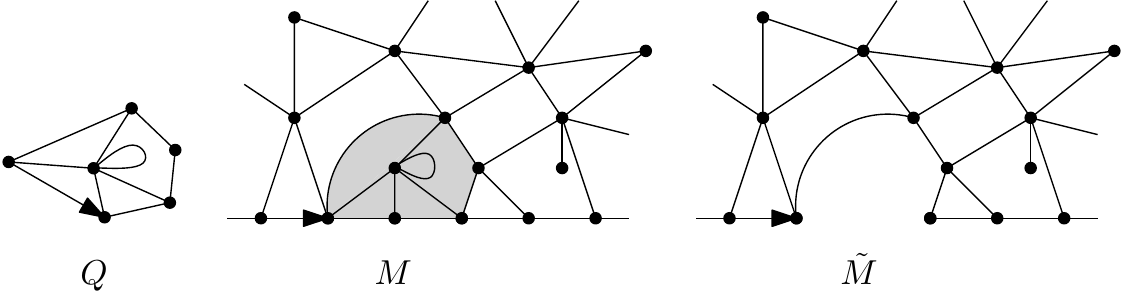} }
\caption{An illustration of domain Markov Property. Left: A finite simply connected map $Q$. 	Centre: A part of $M$ containing $Q$ with $2$ edges along the boundary. Right: The resulting map $\tilde{M}$ after removal of $Q$. Domain Markov property states that the law of $\tilde{M}$ is the same as that of $M$.}\label{fig: dmp}
\end{figure}

In \cite{AR13}, measures on half planar maps were considered which satisfy two natural properties: \textbf{translation invariance} and \textbf{domain Markov property}. See \cite[Section 1.1]{AR13} for the precise definitions. Roughly, the first property states that the law of the map is invariant to translation of the root edge along the boundary and the second states that if we condition that the map contains a fixed simply connected finite map which contains the root edge, then the law of the remaining map is the same. The latter is illustrated in Figure \ref{fig: dmp}. The following theorem of \cite{AR13} characterizes all such probability measures.

\begin{thm}[\cite{AR13}]\label{thm:main}
  All translation invariant and domain Markov measures supported on half planar triangulations without self-loops form a one parameter family $\H_{\alpha}$ where the parameter $\alpha \in [0,1)$. Furthermore $\alpha$ denotes the probability of the event that the triangle adjacent to the root edge is incident to an internal vertex.
\end{thm}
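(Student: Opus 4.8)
The plan is to analyze any such measure through the \emph{peeling procedure}: revealing the triangulation one triangle at a time from the root edge, and using the domain Markov property together with translation invariance to force the law of each step into a one-parameter family.

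First I would make one peeling step precise. Given a translation invariant, domain Markov measure $\mu$ on half planar triangulations without self-loops, consider the triangle $t$ on the inner side of the root edge $(\rho,\rho')$. Its apex is either (a) a vertex not currently on the boundary --- a \emph{free} step --- or (b) a boundary vertex $k\ge 1$ steps to the left of $\rho$, or (c) a boundary vertex $k\ge 1$ steps to the right of $\rho'$. By translation invariance the probabilities of these events are independent of the root's location; call them $\alpha$, $q^-_k$, $q^+_k$. In case (b), one-endedness forces the $k$ boundary edges together with the two edges of $t$ to enclose a finite triangulation $T$ of a $(k+1)$-gon all of whose non-boundary vertices are internal. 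The domain Markov property says that removing $t$ and this enclosed region leaves a fresh sample of $\mu$; peeling the enclosed region \emph{from outside} and re-applying the domain Markov property then identifies the conditional law of $T$, given event (b) with parameter $k$, as a Boltzmann measure on triangulations of the $(k+1)$-gon whose weight is proportional to $\alpha$ raised to the number of internal vertices, times a fixed per-triangle constant. Hence $q^-_k$ equals a fixed constant times the partition function $Z_{k+1}$ of such triangulations, and likewise for $q^+_k$, so the step law is left--right symmetric.

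Next I would turn this into equations. Decomposing $\mu(\text{reveal exactly }Q)$ for a finite revealed region $Q$ by its first peeling step gives a Tutte-type recursion for the $Z_p$ in terms of $\alpha$ and one unknown per-triangle constant. Euler's formula --- a triangulation of a $p$-gon with $v$ internal vertices has $p+2v-2$ triangles --- collapses the apparent freedom: the weight of $Q$ can be rewritten as $C\,a^{p}b^{v}$, with $p$ the exposed perimeter and $v$ the number of internal vertices, so a priori only two parameters remain, and the normalization $\alpha+\sum_{k\ge1}(q^-_k+q^+_k)=1$ --- equivalently the requirement that the generating function of the $Z_p$ solve its fixed-point equation at the critical radius, so that $\mu$ is supported on locally finite one-ended maps --- imposes one relation, leaving the single parameter $\alpha$. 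Solving the resulting algebraic equation (this is where the genuine computation sits) yields an admissible solution exactly for $\alpha\in[0,1)$, and the probability that $t$ has a free apex is by construction $\alpha$. Conversely, running the peeling process with these weights produces a consistent family of finite-dimensional distributions, hence a measure $\H_\alpha$ by Kolmogorov extension, which one verifies directly to be translation invariant and domain Markov.

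I expect the main difficulty to be twofold. First, showing the consistency relations \emph{close up}: an arbitrary revealed region has an inner boundary that need not be a simple cycle and may pinch, and proving it is still assigned a weight of the clean product form $C\,a^{p}b^{v}$ requires care with one-endedness and with the gluing operations in the peeling decomposition. Second, the generating-function analysis --- existence, uniqueness, nonnegativity of the weights, and the fact that admissibility forces exactly $\alpha\in[0,1)$ --- is the technical heart; everything the present paper needs is downstream of it, namely the explicit peeling law of $\H_\alpha$.
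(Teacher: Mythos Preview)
The paper does not prove this theorem at all: it is stated as background, attributed to \cite{AR13}, and used without proof. So there is no ``paper's own proof'' to compare your proposal against.

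For what it is worth, your outline is in the spirit of the argument actually carried out in \cite{AR13}: one peels at the root edge, uses translation invariance and the domain Markov property to see that the law of a single peeling step, together with the Boltzmann law of any enclosed finite region, determines the measure; Euler's formula then shows that the weight of a revealed configuration has the product form $\alpha^{V}\beta^{F-V}$ (cf.\ \cref{lem:prob} here), and the normalization of the peeling step fixes $\beta$ in terms of $\alpha$ (giving $\beta=\alpha(1-\alpha)/2$), leaving the single parameter $\alpha\in[0,1)$. Your identification of the two delicate points --- consistency of the product-form weights under arbitrary peeling sequences, and the generating-function analysis pinning down admissible $(\alpha,\beta)$ --- matches where the work lies in \cite{AR13}. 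But since the present paper simply quotes the result, there is nothing further to compare.
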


\subsection{About the proofs}
Let us now give some brief intuition behind \cref{T:speed,T:return_prob}.
 It can be shown (see \cite{LP:book}, Proposition 6.9) that graphs with positive Cheeger constant and at most exponential edge volume growth (meaning number of edges in the combinatorial ball of radius $r$ is at most $C^r$ for some $C>1$) has positive liminf speed. It can also be shown that on graphs with positive cheeger constant, the return probability is roughly $e^{-cn}$ for some $c>0$ (see \eqref{eq:exp_return}).

Our maps $H$ however are random and any given fixed configuration occurs somewhere in the map with probability $1$. Hence we need to consider a parameter more robust to such small perturbations called anchored expansion introduced by Benjamini, Lyons and Schramm \cite{BLS99}.\ Roughly, a graph has positive anchored expansion if, instead of all finite sets, all connected sets containing a fixed vertex in the graph have large boundary compared to its volume (details in \cref{sec:anchored}). It was shown by Vir\'{a}g in \cite{B00} that graphs with anchored expansion and bounded degree has positive speed away from the root and the return probability is at most $\exp(-cn^{1/3})$ for some constant $c>0$ (for details see \cref{sec:anchored}). Our maps $H$ do have anchored expansion almost surely as was illustrated in \cite{Ray13}, Theorem 2.3. However, since the maps $H$ do not have uniform bounded degree, even positive speed away from the starting point in $H$ do not follow directly from Vir\'{a}g's result in \cite{B00}. It is not difficult to obtain graphs with anchored expansion, exponential edge volume growth, unbounded degree and zero speed. However Theorem 3.5 of \cite{T92} does ensure that $H$ is transient almost surely just because $H$ has anchored expansion. Similar results for return probabilities on Cayley graphs were obtained in \cite{V91,HSC93}.

Our proof methodology however relies on the techniques used by Vir\'{a}g in \cite{B00}. In \cite{B00}, it was shown in Proposition 3.3 that every graph with anchored expansion contains a subgraph with positive Cheeger constant. The complement of this subgraph are sets with small boundary which Vir\'{a}g called islands. The trick is to control the distribution of these islands and make sure the walker does not spend too much time on the islands. We aim to follow a similar approach here and use the geometry of $H$ to arrive at the proposed results.

Recently a full-plane version of $H$ was constructed by Curien in \cite{PSHIT}. Our second main tool is a coupling which realizes $H$ as a submap of its full plane version. Hence we can use the properties of random walk on the full-plane version to our advantage. For example, it was shown in \cite{PSHIT} that random walk such full plane hyperbolic triangulations have positive speed almost surely.

Let us finish by mentioning that there has been growing interest in studying simple random walk on
random planar maps in recent years (see
\cite{GN12,bjornberg2013recurrence,BC11,BeSc}). For example, it was an
open question for about a decade whether uniform infinite planar
triangulations given by \cite{UIPT1} was recurrent or transient and
only very recently it was resolved in \cite{GN12}. However, many
questions do remain open about the behaviour of simple random walk on
the uniform infinite planar maps. For example, it is not known what is
the speed of the simple random walk on these maps, and only an upper
bound is provided in \cite{BC11}.

\paragraph{Disclaimer:} In the computations that follow, the constants might change from one line to next but we shall still denote them by the same letter $c$ for clarity. Also, we fix an $\alpha \in (2/3,1)$ throughout the rest of the paper and we shall drop the subscript $\alpha$ from the notation $\H_\alpha$ unless otherwise stated. Throughout, $H$ will denote a half plane map with law $\H$.

\paragraph{Organization:}
In \cref{sec:expansion,sec:anchored} we recall some of the background
results about expansion, anchored expansion and the paper of Vir\'{a}g
\cite{B00}.  In \cref{sec:dmp,sec:peeling,sec:geom}, we recall some of the
results about the geometry of domain Markov triangulations, and derive some
of their consequences.  In \cref{sec:PSHIT}, we recall the stochastic
hyperbolic full plane triangulations of Curien \cite{PSHIT} and prove the
coupling of $\H$ as its sub-map.  The knowledgeable reader could skim these
and proceed to \cref{sec:3}, where we prove \cref{T:speed} and \cref{sec:4}
where we prove \cref{T:return_prob}.  We end with some comments and open
questions in \cref{sec:open}.

\section{Background}\label{sec:back}

We begin by reviewing various definitions we use. The reader familiar with
anchored expansion and the work of Vir\'ag \cite{B00} may skip to
\cref{sec:dmp}. The reader familiar with the domain Markov property and
\cite{AR13} may skip \cref{sec:dmp,sec:peeling}.

\subsection{Graph notations and expansion}
\label{sec:expansion}

In this section we review the notion of expansion and some properties of
graphs with positive Cheeger constant. Let us start with a few definitions.
A \textbf{weighted graph} $G$ is a graph along with a positive weight
$w(u,v)$ assigned to every edge $(u,v)$. The weight of a vertex $u$ is the
sum of the weights of the edges incident to it and is denoted $w(u)$. We
can view an unweighted graph also as a weighted graph with every edge
having weight $1$. The random walk on a weighted graph is a Markov chain on
its vertices with transition probabilities given by
\[
p(x,y) =\frac{ w(x,y)\mathbbm{1}_{x\sim y}}{w(x)}.
\]

A sequence $(X_n)$ in a graph $G$ is said to have {\bf positive liminf
  speed} if
\[
\liminf_{n\to\infty} \frac{d^G(X_n,\rho)}{n} > 0,
\]
where $d^G$ denotes the graph distance, ignoring weights, and $\rho$ is
some fixed vertex (this condition is clearly independent of $\rho$).

We consider the Hilbert space of functions defined on the vertices of
$G$ with the inner product and norm
\[
\langle f,g \rangle = \sum_{v \in V(G)}f(v)g(v)w(v), \quad \text{ and }
\quad \|f\|= \langle f,f \rangle^{1/2}.
\]
The Markov transition kernel $P$ for the random walk $\{X_n\}_{n\ge 1}$ on
$G$ is the operator defined by $Pf(v) = \E(f(X_1)|X_0=v)$.
\
For a set $S \subset V(G)$, the {\bf edge boundary} of $S$, denoted
$\partial S$ is defined to be the set of edges which have one endpoint in
$S$ and the other in $V(G)\setminus S$. The {\bf Cheeger constant} of $G$
is defined to be
\[
i = i(G) := \inf_{S \subset V(G)} \frac{|\partial S|}{|S|}
\]
where the infimum is over all finite subsets of vertices of $G$ and for any
set $X$ of vertices or edges, $|X|$ denotes the sum of weights over $X$.
Notice that the weight of a set of vertices $X$ in an unweighted graph is
nothing but the sum of the degrees of the vertices in $X$.

Recall Cheeger's classical inequality (see \cite{Dodziuk84,Mohar88,LP:book}
or \cite[Proposition 4.1]{B00}):
\begin{equation}
  \|P\| \le1-\frac {i^2}{2}. \label{prop:DM}
\end{equation}
Inequality \eqref{prop:DM} implies
\begin{equation}
  \P(X_n =\rho) = \frac{1}{w(\rho)} \langle\mathbbm{1}_\rho, P^n
  \mathbbm{1}_\rho \rangle
  \le \frac{1}{w(\rho)}\|\mathbbm{1}_\rho\|^2.
  \|P\|^n\le(1-i^2/2)^n\label{eq:exp_return}
\end{equation}
Thus on a graph with positive Cheeger constant, the return probabilities decay
exponentially in the number of random walk steps.

\subsection{Anchored expansion}\label{sec:anchored}

In the context of random graphs, frequently it is the case that expansion
holds on average but not uniformly. In many such cases we have a weaker
form knows as {\bf anchored expansion}. We follow the terminology and
reasoning of Vir\'ag \cite{B00}, and repeat some definitions and results
from there. We say a weighted graph $G$ has anchored expansion if
$i^*(G)>0$, where the anchored expansion constant is defined by
\[
i^*(G) := \liminf_{n \to \infty} \left\{ \frac{|\partial S|}{|S|} \ :\
  \text{$S\subset V(G)$ is connected, $|S|=n$ and $\rho\in S$}\right\}
\]

As mentioned in the introduction, it was proved by Vir\'{a}g (\cite{B00})
that for bounded degree unweighted graphs, anchored expansion implies
positive liminf speed. This result is not applicable in our setting because
there is no uniform upper bounds on the degrees of the vertices of the
graphs we consider. However, some information about the geometry of graphs
with anchored expansion as established by Vir\'{a}g will be useful in our
subsequent analysis. A key idea is to prove that a graph with anchored
expansion contains a subgraph (called the ocean) with positive Cheeger
constant. The complement of this subgraph has only finite components which
Vir\'{a}g calls islands. The main idea of Vir\'{a}g was to show that the
random walk cannot spend too much time in the islands.

For a finite set $S$ of vertices and a number $i \in (0,1)$, define the
\textbf{$i$-isolation} of a finite set $S$ to be
\[
\Delta_iS = i|S| - |\partial S|
\]
A set of vertices with positive $i$-isolation is called $i$-isolated.  A
finite vertex set $S$ is called an \textbf{$i$-isolated core} if $\Delta_i
S > \Delta_i A$ for every $A \subsetneq S$. Putting $A$ to be the empty
set, we see that an $i$-isolated core is always $i$-isolated.

\begin{prop}[{\cite[Corollary 3.2]{B00}}]
  \label{prop:isolated core}
  The union of finitely many $i$-isolated cores is an $i$-isolated core.
\end{prop}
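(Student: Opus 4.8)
The plan is to derive the statement from the \emph{supermodularity} of the isolation functional $S\mapsto\Delta_i S$ together with an induction on the number of cores, in the same spirit as the standard fact that a union of optimal Cheeger sets is again optimal.

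First I would record the relevant modularity facts. For finite vertex sets $A,B$ the vertex weight is modular, $|A\cup B|+|A\cap B|=|A|+|B|$, since the weight of a vertex set is simply the sum of the weights of its vertices; and the edge boundary is submodular,
\[
  |\partial(A\cup B)|+|\partial(A\cap B)|\le|\partial A|+|\partial B|.
\]
The latter I would verify one edge at a time: an edge $(u,v)$ lies in $\partial X$ exactly when $\mathbbm{1}_{u\in X}\neq\mathbbm{1}_{v\in X}$, and since $\mathbbm{1}_{\cdot\in A\cup B}=\max(\mathbbm{1}_{\cdot\in A},\mathbbm{1}_{\cdot\in B})$ and $\mathbbm{1}_{\cdot\in A\cap B}=\min(\mathbbm{1}_{\cdot\in A},\mathbbm{1}_{\cdot\in B})$, the contribution of a fixed edge reduces to an elementary inequality among the four numbers $\mathbbm{1}_{u\in A},\mathbbm{1}_{v\in A},\mathbbm{1}_{u\in B},\mathbbm{1}_{v\in B}\in\{0,1\}$, which is immediate by inspection of the finitely many cases. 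Subtracting, the functional $\Delta_i=i|\cdot|-|\partial\cdot|$ is supermodular:
\[
  \Delta_i(A\cup B)+\Delta_i(A\cap B)\ge\Delta_i A+\Delta_i B.
\]

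Next, by induction on the number $k$ of cores it is enough to treat $k=2$: granting that the union of any $k-1$ cores is a core, $S_1\cup\dots\cup S_{k-1}$ is a core, and the two-set case applied to it and $S_k$ completes the step (the case $k=1$ being trivial). So let $S_1,S_2$ be $i$-isolated cores, set $S=S_1\cup S_2$, and fix $A\subsetneq S$; I must show $\Delta_i S>\Delta_i A$. Since $A$ cannot contain both $S_1$ and $S_2$, relabel so that $S_1\not\subseteq A$, i.e.\ $A\cap S_1\subsetneq S_1$. Applying supermodularity to $A$ and $S_1$ and using that $S_1$ is a core (so $\Delta_i S_1>\Delta_i(A\cap S_1)$),
\[
  \Delta_i(A\cup S_1)\ge\Delta_i A+\Delta_i S_1-\Delta_i(A\cap S_1)>\Delta_i A.
\]
If $A\cup S_1=S$ we are done. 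Otherwise any $v\in S\setminus(A\cup S_1)$ lies in $S_2$ (as $S=S_1\cup S_2$ and $v\notin S_1$), so $(A\cup S_1)\cap S_2\subsetneq S_2$; applying supermodularity to $A\cup S_1$ and $S_2$, noting that $(A\cup S_1)\cup S_2=S$ and using that $S_2$ is a core, we obtain
\[
  \Delta_i S\ge\Delta_i(A\cup S_1)+\Delta_i S_2-\Delta_i((A\cup S_1)\cap S_2)>\Delta_i(A\cup S_1)>\Delta_i A,
\]
which is what we wanted.

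Everything here is routine; the one point that requires a little care is to keep \emph{both} applications of the ``core'' hypothesis strict, which is precisely why we arranged $A\cap S_1$ and $(A\cup S_1)\cap S_2$ to be \emph{proper} subsets of $S_1$ and of $S_2$ respectively. I do not anticipate a genuine obstacle — the content of Vir\'ag's corollary is exactly this supermodularity bookkeeping.
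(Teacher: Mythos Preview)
Your argument is correct. The supermodularity of $\Delta_i$ (from modularity of vertex weight and submodularity of the edge boundary) together with the two-set case and induction is exactly the standard route, and your care in keeping both intersections proper so that the strict inequalities from the core hypothesis survive is the only delicate point; you handled it cleanly.

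As for comparison: the present paper does not actually prove this proposition at all --- it is quoted verbatim as \cite[Corollary~3.2]{B00} and used as a black box. Vir\'ag's original proof is likewise the supermodularity computation you have written out, so your proposal simply fills in what the paper outsources to the reference.
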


Suppose $0<i<i^*(G)$ and let $A_i$ be the union of all $i$-isolated cores
in $G$. Anchored expansion and \cref{prop:isolated core} imply that
$i$-isolated cores containing any vertex have a bounded size, and so all
connected components of $A_i$ are finite unions of $i$-isolated cores and
hence are finite. The connected components of $A_i$ are called {\bf
  $i$-islands}. Components of the complement $G \setminus A_i$ are
called \textbf{$i$-oceans} (note $G \setminus A_i$ is not necessarily
connected, since cutsets need not be connected). The following proposition
shows that if $i'<i$ and $G$ is an unweighted graph, all small $i$-islands
``sink'' in the $i'$-ocean.

\begin{prop}[{\cite[Lemma~3.4]{B00}}]
  \label{prop:sinking}
  Suppose $G$ is an unweighted graph and $0<i'<i<i^*(G)$. Then $A_{i'}
  \subset A_i$. Further if $B$ is an $i$-island of size at most $1/i'$.
  Then $B \subset G \setminus A_{i'}$
\end{prop}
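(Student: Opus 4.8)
The plan is to prove the two assertions in turn, with the first feeding into the second; both are soft.

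For the inclusion $A_{i'}\subset A_i$: since $A_{i'}$ is the union of all $i'$-isolated cores and $A_i$ the union of all $i$-isolated cores, it suffices to check that every $i'$-isolated core $S$ is also an $i$-isolated core. Fix such an $S$ and a proper subset $A\subsetneq S$. As $G$ is unweighted and has no isolated vertices, $|S|-|A|=\sum_{v\in S\setminus A}\deg(v)\ge 1>0$. Rearranging the defining inequality $\Delta_{i'}S>\Delta_{i'}A$ yields $|\partial S|-|\partial A|<i'(|S|-|A|)$, and hence
\[
\Delta_iS-\Delta_iA=i(|S|-|A|)-\bigl(|\partial S|-|\partial A|\bigr)>(i-i')(|S|-|A|)>0
\]
because $i>i'$. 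Thus $\Delta_iS>\Delta_iA$ for every $A\subsetneq S$, so $S$ is an $i$-isolated core and therefore $S\subset A_i$; this gives $A_{i'}\subset A_i$ (and uses only $i'<i$).

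For the sinking statement: let $B$ be an $i$-island with $|B|\le 1/i'$ and suppose, for contradiction, that $B\cap A_{i'}\ne\emptyset$. Pick an $i'$-island $C$ — that is, a connected component of $A_{i'}$ — with $C\cap B\ne\emptyset$. By the first part $C\subset A_{i'}\subset A_i$, and since $C$ is connected it lies in a single connected component of $A_i$, which must be $B$; hence $C\subset B$ and $|C|\le|B|\le 1/i'$. On the other hand, since $i'<i<i^*(G)$, the discussion just before the statement shows that the connected component $C$ of $A_{i'}$ is a finite union of $i'$-isolated cores, hence an $i'$-isolated core by \cref{prop:isolated core}, hence $i'$-isolated (take $A=\emptyset$): $|\partial C|<i'|C|\le i'\cdot(1/i')=1$. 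As $|\partial C|$ is a nonnegative integer this forces $|\partial C|=0$, so no edge leaves $C$; but $C$ is finite and nonempty while $G$ is infinite and connected, so $C\ne V(G)$ and $\partial C\ne\emptyset$ — a contradiction. Therefore $B\cap A_{i'}=\emptyset$, i.e.\ $B\subset G\setminus A_{i'}$.

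I do not expect a genuine obstacle here. The only points that want a little care are the nondegeneracy $|S|>|A|$ in the first part (which is why one wants $G$ to have no isolated vertices), and in the second part the identification of a connected component of $A_{i'}$ with a single $i'$-isolated core — this is exactly where \cref{prop:isolated core} and the finiteness consequence of anchored expansion enter — together with the trivial monotonicity $C\subset B\Rightarrow|C|\le|B|$ that propagates the size bound from $B$ to $C$.
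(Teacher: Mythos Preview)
Your argument is correct. The paper does not supply its own proof of this proposition --- it simply quotes the statement from Vir\'ag \cite[Lemma~3.4]{B00} --- so there is nothing to compare against beyond noting that your proof is self-contained, uses only the definitions and \cref{prop:isolated core} already recalled in the paper, and is essentially the argument one finds in Vir\'ag. One small remark: the nondegeneracy $|S|-|A|>0$ is not actually needed in the first part, since if $|S|=|A|$ then $\Delta_{i'}S>\Delta_{i'}A$ already gives $|\partial A|>|\partial S|$ and hence $\Delta_iS>\Delta_iA$ directly; your caution about isolated vertices is harmless but superfluous.
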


We now describe an {\bf induced random walk}.  Let $\{X_n\}_{n \ge 1}$ be a
random walk on a graph $G$ and suppose $0<i<i^*(G)$. Let
$(\tau_j)_{j\geq0}$ be the ordered set of times at which $X_t$ is in an
$i$-ocean. Then $(X_{\tau_j})$ is a reversible Markov chain on $G\setminus
A_i$, and in particular is equivalent to a random walk on a graph $G_i$,
which is closely related to the oceans, defined as follows. The vertices of
$G_i$ are $G \setminus A_i$. For any vertex $v$, its weight $w_i(v) = w(v)$
is inherited from $G$. For any pair of vertices $u,v$, put a weight
\[
w_i(u,v) = w_i(u) \P(X_{\tau_1} = v | X_0=u)
\]
Note that $w_i$ is a symmetric function on the edges because of the
reversibility of the walk. The following is proved in \cite{B00}, following
Lemma~3.4.

\begin{prop}[\cite{B00}]
  \label{prop:cheeger_weighted}
  The graph $G^i$ has cheeger constant at least $i$.
\end{prop}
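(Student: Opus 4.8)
The plan is to fix a finite $S\subseteq V(G^i)=V(G)\setminus A_i$ and prove $|\partial_{G^i}S|_{w_i}\ge i\,|S|_{w_i}$. Since the vertex weights of $G^i$ are inherited, $|S|_{w_i}=\sum_{v\in S}w(v)=:w(S)$, and unwinding the definition of $w_i$ (self-loops do not contribute to the edge boundary) together with $\sum_v\P_u(X_{\tau_1}=v)=1$ gives
\[
|\partial_{G^i}S|_{w_i}=\sum_{u\in S}w(u)\,\P_u(X_{\tau_1}\notin S),
\]
the rate at which the induced walk leaves $S$. Because at the times $1,\dots,\tau_1-1$ the walk lies in $A_i$, the event $\{X_{\tau_1}\notin S\}$ is exactly $\{$the $G$-walk reaches $V(G)\setminus A_i\setminus S$ before returning to $S\}$, so by the standard hitting-probability identity this quantity equals the effective conductance $\mathcal C_{\mathrm{eff}}\bigl(S\leftrightarrow V(G)\setminus A_i\setminus S\bigr)$ in $G$ with $S$ and its ocean-complement each contracted to a point and the islands $A_i$ left intact. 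Thus the goal becomes $\sum_{u\in S}w(u)\,\P_u(X_{\tau_1}\notin S)\ge i\,w(S)$.

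First I would record the isoperimetric bound that the ocean satisfies inside $G$: every finite nonempty $T\subseteq V(G)$ containing no $i$-isolated core satisfies $|\partial_G T|\ge i\,|T|_w$. Indeed, if $|\partial_G T|<i\,|T|_w$ then $\Delta_i T>0$; taking $A\subseteq T$ of smallest size among subsets maximizing $\Delta_i$, any $A'\subsetneq A$ has $\Delta_i A'\le\Delta_i A$, and equality would contradict minimality, so $\Delta_i A'<\Delta_i A$ and $A$ is a nonempty $i$-isolated core inside $T$ — impossible. In particular, since all $i$-isolated cores lie in $A_i$, the set $S$ itself (and $S$ together with any finite piece of the ocean) obeys $|\partial_G T|\ge i\,|T|_w$; this already yields $|\partial_G S|\ge i\,w(S)$.

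It remains to see that the induced-walk escape rate is not much smaller than $|\partial_G S|$. The only loss is edge-weight that leaves $S$ into an $i$-island $B$ and, after an excursion inside $B$, returns to $S$ rather than emerging in the ocean outside $S$; each such excursion stays in a single island, since distinct islands are non-adjacent in $G$. To control this one exploits that every $i$-island is an $i$-isolated \emph{core}, not merely $i$-isolated: the inequalities $\Delta_i A<\Delta_i B$ for all $A\subsetneq B$ bound how strongly a sub-piece of $B$ can pull the walk back toward $S$, and combined with the isoperimetric bound above — applied to $S$ enlarged by the pieces of the adjacent islands on which the walk is more likely to return to $S$ than to escape — one recovers $\sum_{u\in S}w(u)\,\P_u(X_{\tau_1}\notin S)\ge i\,w(S)$. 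This is precisely the content of \cite[Lemma~3.4]{B00}, and the cleanest route is to follow that argument verbatim. The reduction in the first paragraph and the ocean isoperimetry in the second are routine; I expect the main obstacle to be this last step — the careful bookkeeping of the weight leaking into boundary islands and back, and checking that the $i$-isolated-\emph{core} property (rather than mere $i$-isolation) exactly compensates for it.
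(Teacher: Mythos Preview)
The paper gives no proof of its own here --- it simply cites \cite{B00}, noting that the argument appears there in the passage following Lemma~3.4. Your first two paragraphs correctly set up the problem (the identification of $|\partial_{G^i}S|_{w_i}$ with the escape rate $\sum_{u\in S}w(u)\,\P_u(X_{\tau_1}\notin S)$, and the ``ocean isoperimetry'' $|\partial_G T|\ge i\,|T|_w$ for any finite $T$ containing no $i$-isolated core) and are exactly the ingredients Vir\'ag uses. Your third paragraph, however, is not a proof but a plan: you correctly locate the remaining difficulty and the shape of its resolution (enlarge $S$ by the ``attracted'' portion of each adjacent island, check the enlarged set contains no $i$-isolated core, then apply the isoperimetry), but you defer the actual work to \cite{B00} --- which is precisely what the paper does too. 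One correction: Lemma~3.4 of \cite{B00} is the \emph{sinking} lemma (restated in this paper as Proposition~\ref{prop:sinking}), not the Cheeger estimate; the argument you want is the one appearing just \emph{after} Lemma~3.4, as the paper's attribution indicates.
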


We say a graph $G$ has \textbf{upper exponential growth} if $|B_r| \leq
e^{Cr}$ for some $C<\infty$, where $B_r=\{x:d^{G}(x,\rho) \le r\}$ is the
ball. As usual, this does not depend on the choice of $\rho$. This is
connected to the random walk behaviour through the following:

\begin{prop}\label{lem:island_walk}
  Let $G$ be a graph with anchored expansion and upper exponential growth.
  Fix $i<i^*(G)$ and let $(\tau_m)$ be as above. Then
  \[
  \liminf_{m\to\infty} \frac{ d^G(X_{\tau_m},\rho) }{m} >0
  \]
\end{prop}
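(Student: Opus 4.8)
The plan is to apply, to the \emph{induced} walk, the classical fact that a graph with positive Cheeger constant and at most exponential volume growth has positive liminf speed (\cite[Proposition 6.9]{LP:book}), but re-done by hand because of two features of our situation. By \cref{prop:cheeger_weighted} the induced walk $(X_{\tau_m})_{m\ge0}$ is the random walk on the weighted graph $G^i$, which has Cheeger constant at least $i$; however $G^i$ is \emph{weighted}, with vertex weights $w_i(v)=w(v)$ that are unbounded (since $G$ has unbounded degrees), and moreover we must control the displacement $d^G(X_{\tau_m},\rho)$ measured in $G$ rather than in $G^i$. So one cannot black-box \cite[Proposition 6.9]{LP:book}; instead one reruns its proof keeping the weights explicit.

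First I would record a heat-kernel bound for $G^i$. Letting $P$ be the transition operator of the walk on $G^i$, Cheeger's inequality \eqref{prop:DM} gives $\|P\|\le 1-i^2/2<1$. Then, by Cauchy--Schwarz in the weighted Hilbert space of $G^i$ and $\|P^m\|\le\|P\|^m$, for all $u,v\in G\setminus A_i$ and $m\ge1$,
\[
\P\big(X_{\tau_m}=v \mid X_{\tau_0}=u\big)=\frac{\langle\mathbbm{1}_u,P^m\mathbbm{1}_v\rangle}{w_i(u)}\le\frac{\|\mathbbm{1}_u\|\,\|P\|^m\,\|\mathbbm{1}_v\|}{w_i(u)}=\sqrt{\frac{w(v)}{w(u)}}\Big(1-\tfrac{i^2}{2}\Big)^m .
\]
I would also note two minor points. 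The time $\tau_m$ is a.s.\ finite for every $m$: distinct $i$-islands are non-adjacent, so from a vertex of an $i$-island the walk either stays in that island or steps into an ocean, and since islands are finite the walk a.s.\ leaves each island and thus visits oceans infinitely often. And writing $u:=X_{\tau_0}$, we have $d^G(u,\rho)\le M$ for some finite constant $M$: either $\rho\notin A_i$ (then $M=0$), or $\rho$ lies in a finite island (finite by anchored expansion and \cref{prop:isolated core}) and $u$ is a neighbour of that island.

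Next, fix $\delta>0$ and sum the displayed bound over the ball $B^G_{\delta m}(u)=\{v:d^G(v,u)\le\delta m\}$. Since $G$ is unweighted, $w(v)=\deg_G(v)\ge1$ and hence $\sqrt{w(v)}\le w(v)$; combined with upper exponential growth, $\sum_{v\in B^G_{\delta m}(u)}w(v)\le\sum_{v\in B^G_{\delta m+M}(\rho)}w(v)\le e^{C(\delta m+M)}$. Therefore
\[
\P\big(d^G(X_{\tau_m},u)\le\delta m\big)\le\frac{1}{\sqrt{w(u)}}\Big(1-\tfrac{i^2}{2}\Big)^m\sum_{v\in B^G_{\delta m}(u)}\sqrt{w(v)}\le e^{CM}\Big(\big(1-\tfrac{i^2}{2}\big)e^{C\delta}\Big)^m .
\]
Choosing $\delta$ small enough that $\big(1-i^2/2\big)e^{C\delta}<1$ makes the right-hand side summable in $m$, so Borel--Cantelli gives that a.s.\ $d^G(X_{\tau_m},u)>\delta m$ for all large $m$; since $d^G(X_{\tau_m},\rho)\ge d^G(X_{\tau_m},u)-M$, this yields $\liminf_{m\to\infty}d^G(X_{\tau_m},\rho)/m\ge\delta>0$.

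The main obstacle is precisely the step from the heat kernel to the tail bound. It is tempting to bound $\P\big(d^G(X_{\tau_m},u)\le\delta m\big)$ by $(1-i^2/2)^m$ times the \emph{number} of vertices in $B^G_{\delta m}(u)$, but this is false for a weighted graph, whose heat kernel is not bounded by $\|P\|^m$. The resolution is to retain the factor $\sqrt{w(v)}$ in the heat-kernel bound and absorb it into the weighted (i.e.\ edge-)volume growth via $\sqrt{w(v)}\le w(v)$ --- which is exactly the form in which upper exponential growth is assumed. Everything else (finiteness of $\tau_m$, the constant $M$, the Borel--Cantelli step) is routine bookkeeping.
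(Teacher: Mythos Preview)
Your proof is correct and is essentially the same argument as the paper's, just written out in full: the paper's one-line proof invokes \cite[Lemma~4.2]{B00} applied to the weighted graph $G_i$ with the function $f(x)=d^G(x,\rho)$, and what you have done is precisely reprove that lemma from scratch (Cheeger's inequality $\Rightarrow$ spectral gap $\Rightarrow$ heat-kernel decay $\Rightarrow$ sum over the sublevel set using exponential volume growth $\Rightarrow$ Borel--Cantelli). Your observation that one must keep the factor $\sqrt{w(v)}$ and absorb it via $\sqrt{w(v)}\le w(v)$ into the \emph{weighted} volume bound is exactly the point of using $f=d^G(\cdot,\rho)$ rather than the $G_i$-distance, and is implicit in the paper's citation.
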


\begin{proof}
  This is a straightforward application of Lemma~4.2 of \cite{B00} to
  $G_i$, with $f(x) = d^G(x,\rho)$ (as opposed to the distance in $G_i$).
\end{proof}

\subsection{Domain Markov triangulations}
\label{sec:dmp}

We now recall some definitions and properties of domain Markov half planar
triangulations, which are the main object in this paper. We refer the
reader to \cite{AR13} for further details.

As mentioned in \cref{thm:main}, $\alpha$ is the probability under
$\H_\alpha$ of the event that the triangle incident to the any given
boundary edge contains the edge and one internal vertex.
The other possibilities are referred to as steps of the form $(L,i)$ or
$(R,i)$. A step of form $(L,i)$ (resp.\ $(R_i)$), is the event that the
triangle incident to some fixed boundary edge has its third vertex on the
boundary at a distance $i$ to the left (resp.\ right) of the given edge
along the boundary (see \cref{fig:forms}). We shall also talk about such
events with the root edge replaced any fixed edge on the boundary of the
map.

\begin{figure}
  \centering{\includegraphics[width=.8\textwidth]{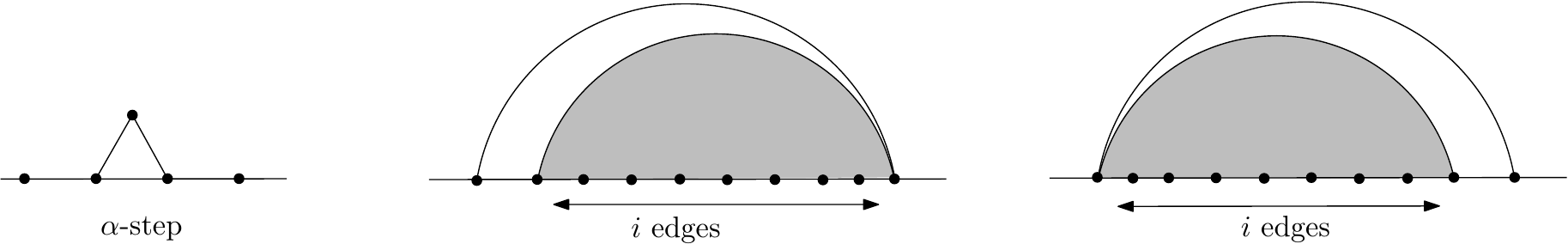}}
  \caption{Left: An $\alpha$-step. Centre: A step of the form
    $(R,i)$. Right: A step of the form $(L,i)$. The gray area denotes some
    unspecified triangulation.}
  \label{fig:forms}
\end{figure}

Because of translation invariance, the measures of such events do not
depend on the choice of the boundary edge. It was also shown in \cite{AR13}
that for any fixed $i \ge 1$, the probability of $(L,i)$ and $(R,i)$ are
the same (denoted $p_i$). Let $p_{i,k}$ denote the probability of the event
that $(L,i)$ (or $(R,i)$) occurs and the triangle incident to the root edge
separates $k$ internal vertices from infinity. Let $\beta=p_{1,0}$ be the
probability of the event of the form $(R,1)$ with no internal vertex in the
$2$-gon enclosed by the triangle incident to the root edge. The following
formulas were derived in \cite{AR13}:
\begin{align}
  \beta & = \frac{\alpha(1-\alpha)}{2}&
  p_{i,k} & = \phi_{k,i+1} \beta^i(\alpha \beta)^k&
  p_i & \sim c \left(\frac{2}{\alpha}-2\right)^{i} i^{-3/2}\label{eq:super_pi}
\end{align}
for some constant $c>0$, where $\phi_{k,i+1}$ denotes the number of
triangulations of an $(i+1)$-gon with $k$ internal vertices.
Notice that $p_i$ has an exponential tail for any $\alpha>2/3$.

\subsection{Peeling}\label{sec:peeling}

Let us briefly describe the concept of peeling, originating in work of
Watabiki \cite{Wat} and which was introduced in its present form in
\cite{UIPT2}. For further background, history and numerous applications of
this very useful tool, we refer to
\cite{UIPT2,AC13,curien13glimpse,menard2013perc,AR13,BC11}.

Given a map $T$, the idea is to construct a growing sequence of simply
connected sub-maps $P_n$ with complements $T_n = T\setminus P_n$ as
follows: at every step, we pick an edge in the boundary of $T_n$. We then
construct $P_{n+1}$ by adding to $P_n$ the face $f$ of $T_n$ incident to
the chosen edge along with any finite components of $T\setminus (P_n \cup
f)$.  This can be carried out for full plane maps, half plane maps, and
other topologies as well.

If $T$ is a domain Markov half plane triangulation with law $\H$ and the
edge chosen at each step is independent of $T_n$, then $T_n$ has the same
law $\H$ for all $n$, and is independent of the revealed map $P_n$. Notice
that every peeling step is of the form $(L,i)$ or $(R,i)$ as discussed in
\cref{sec:dmp} and if $T$ is a full-plane map and $P_n$ is a triangulation
of a $p$-gon, then necessarily $i\le p-2$ since we do not allow self-loops
in our triangulation.

The nice thing about peeling is that we are free to choose the edge on which we perform the next peeling step, the only constraint being the choice should depend only upon $P_n$ (along with possibly another source of randomness). We discuss two such algorithms which we shall need.

\paragraph{Peeling to reveal hulls:} The aim of this algorithm is to sequentially reveal the graph distance balls of radius $r$ around the root (or any finite subset of the boundary) along with the finite components of the complement. The hull of the ball of radius $r$ is completed at some random time $N_r$. Having obtained $P_{N_r}$, we peel continuously on the edges incident to vertices on the boundary of $P_{N_r}$ which is common with $T_{N_r}$ until every vertex on the boundary of $P_{N_r}$ is no longer in the boundary. We refer to \cite{Ray13}, Section 4.1 for more details.

\paragraph{Peeling along random walk:} This was introduced in \cite{BC11}. The idea is to run a random walk simultaneously with the peeling procedure. Having known $P_n$ and the vertex $v$ the random walker is in, there are two possible steps. If $v$ not in the boundary of $P_n$, we just perform a random walk step. Otherwise we continue peeling until we reveal the hull of the ball of radius $1$ around $v$ in $T_n$. Then we perform a random walk step. For details we refer to \cite{BC11}, Section 1.4.

\subsubsection{Free triangulations}

If we perform peeling steps in $H$, then it turns out via the calculations done in \cite{AR13} that the finite triangulation enclosed by the revealed triangle after a step of the form $(L,i)$ or $(R,i)$ is distributed as a \textbf{free triangulation} (defined below) of an $(i+1)$-gon with parameter $\alpha \beta$ with $\beta$ given by \eqref{eq:super_pi}. For details we refer to \cite{UIPT2,Ray13}.

\begin{defn}\label{def:free}
  The \textbf{free} distribution on rooted triangulations
  of an $m$-gon with parameter $q\leq\frac{2}{27}$ is the probability
  measure that assigns weight $q^n / Z_m(q)$ to each rooted
  triangulation of the $m$-gon having $n$ internal vertices, where
  \[
  Z_m(q) = \sum_n \phi_{n,m} q^n .
  \]
and $\phi_{n,m}$ is the number of triangulations of an $m$-gon with $n$ internal vertices.
\end{defn}

The partition function in \cref{def:free} can be explicitly computed:
\begin{prop}[\cite{GFbook}]\label{prop:Z}
  Let $q = \theta(1-2\theta)^2$. Then for $m
\ge 0$, $Z_{m+2}(q)$ is finite if and only if $q \in [0, 2/27]$ (equivalently $\theta \in [0,1/6]$) and is given by
  \[
  Z_{m+2}(q) = ((1-6\theta) m +2-6\theta) \frac{(2m)!}{m!(m+2)!}
  (1-2\theta)^{-(2m+2)}.
  \]
\end{prop}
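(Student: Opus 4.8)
The plan is to peel the root edge of a triangulation of a polygon so as to obtain a functional equation for the generating function $F(x):=\sum_{p\ge 2}Z_p(q)\,x^p$, to solve it by the quadratic method, and then to read off $Z_{m+2}=[x^{m+2}]F$; the change of variables $q=\theta(1-2\theta)^2$ is precisely what makes the algebraic function that appears explicit. First I would set up a recursion: in a rooted triangulation of a $p$-gon, the triangle incident to the root edge either has an internal apex --- and deleting it leaves a triangulation of a $(p+1)$-gon with one fewer internal vertex, contributing $q\,Z_{p+1}$ --- or has its apex on the boundary, in which case it cuts the $p$-gon into a $j$-gon and a $(p+1-j)$-gon, each triangulated, with the internal vertices split between them, contributing $\sum_{a+b=p+1,\ a,b\ge 2}Z_aZ_b$; the degenerate $p=2$ term accounts for the empty triangulation of the $2$-gon and contributes $1$. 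As identities of formal power series in $q$ this reads
\[
  Z_p\;=\;\mathbbm{1}_{\{p=2\}}+q\,Z_{p+1}+\!\!\sum_{\substack{a+b=p+1\\ a,b\ge 2}}\!\!Z_aZ_b\qquad(p\ge 2),
\]
and multiplying by $x^p$ and summing gives the quadratic equation $F(x)^2+(q-x)F(x)+x^2\bigl(x-q\,Z_2\bigr)=0$, whose only unknown besides $F$ is the constant $Z_2=Z_2(q)$.

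Next I would apply the quadratic method. Writing $\bigl(2F+q-x\bigr)^2=D(x)$ with
\[
  D(x):=(q-x)^2-4x^2(x-qZ_2)=-4x^3+(1+4qZ_2)x^2-2qx+q^2,
\]
the requirement that $F$ be a power series in $x$ whose coefficients are themselves power series in $q$ forces the cubic $D(x)$ to have a double root in $x$, i.e.\ $D(x)=-4(x-a)^2(x-b)$ for some power series $a=a(q)$ and $b=b(q)$. Matching coefficients gives three relations among $a,b$ and $Z_2$; eliminating $b$ leaves the cubic $2a^3-q\,a+q^2=0$, and here the substitution $q=\theta(1-2\theta)^2$ pays off, since one checks at once that $a=\theta(1-2\theta)$ is a root, whence $b=\tfrac14(1-2\theta)^2$ and $Z_2=(1-3\theta)(1-2\theta)^{-2}$. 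Choosing the branch of the square root for which $F(0)=0$ then yields the closed form
\[
  F(x)=\frac{x-q}{2}+(a-x)\sqrt{\,b-x\,}.
\]

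It remains to extract the coefficient $Z_{m+2}=[x^{m+2}]F$. Expanding $\sqrt{b-x}=\sqrt b\,(1-x/b)^{1/2}$ by the generalized binomial theorem and using $\binom{1/2}{k}=\tfrac{(-1)^{k-1}}{4^k(2k-1)}\binom{2k}{k}$ for $k\ge 1$, the coefficient reduces to a short combination of the central binomial coefficients $\binom{2m+2}{m+1}$ and $\binom{2m+4}{m+2}$ with rational prefactors; inserting $\sqrt b=\tfrac12(1-2\theta)$ and $a=\theta(1-2\theta)$ and simplifying collapses this to $\bigl((1-6\theta)m+2-6\theta\bigr)\tfrac{(2m)!}{m!(m+2)!}(1-2\theta)^{-(2m+2)}$, which is the claimed formula. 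I expect this last binomial/factorial manipulation --- routine but unenlightening --- to be the only laborious step of the argument.

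Finally, for the range of $q$: since $dq/d\theta=(1-2\theta)(1-6\theta)\ge 0$ on $[0,1/6]$, the map $\theta\mapsto q$ is a bijection from $[0,1/6]$ onto $[0,2/27]$ on which the displayed formula is finite and positive (note $(1-6\theta)m+2-6\theta\ge 1$ there). For $q<2/27$ the equality $Z_{m+2}(q)=\sum_n\phi_{n,m+2}q^n$ holds because both sides are analytic in $q$ near $0$ and agree there, and at $q=2/27$ it holds by monotone convergence since all $\phi_{n,m+2}\ge 0$. For $q>2/27$ the sum diverges: the formula for $Z_3$ is analytic in $\theta$ near $\theta=1/6$ while $\theta-\tfrac16\sim c\,\sqrt{\tfrac2{27}-q}$, so $Z_3$ has a square-root singularity at $q=2/27$, whence $\phi_{n,3}$ grows at exponential rate $27/2$; and a fan-gluing comparison gives $\phi_{n,m+2}\ge \phi_{n,3}$ up to a constant factor, so $\sum_n\phi_{n,m+2}q^n=\infty$ whenever $q>2/27$.
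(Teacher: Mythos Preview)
The paper does not prove this proposition at all: it is quoted as a known result from \cite{GFbook} and used as a black box. So there is no in-paper proof to compare against.

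Your argument is the standard one and is correct. The Tutte-type root-face decomposition you write down does give
\[
F(x)^2+(q-x)F(x)+x^2\bigl(x-qZ_2\bigr)=0,
\]
the quadratic method forces the cubic discriminant $D(x)$ to have a double root, and the substitution $q=\theta(1-2\theta)^2$ indeed rationalizes everything, yielding $a=\theta(1-2\theta)$, $b=\tfrac14(1-2\theta)^2$, $Z_2=(1-3\theta)(1-2\theta)^{-2}$, and hence the closed form $F(x)=\tfrac{x-q}{2}+(a-x)\sqrt{b-x}$. The coefficient extraction via the generalized binomial series is routine and gives exactly the stated formula. Your finiteness argument is also fine: the bijection $\theta\mapsto q$ from $[0,1/6]$ onto $[0,2/27]$ covers the convergent range, and for $q>2/27$ the square-root singularity of $Z_3$ at $q=2/27$ forces $\phi_{n,3}$ to grow at exponential rate $27/2$, while the fan-gluing injection (adding $m-1$ boundary triangles, no new internal vertices) gives $\phi_{n,m+2}\ge\phi_{n,3}$, so $\sum_n\phi_{n,m+2}q^n$ diverges. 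The only cosmetic point is that ``up to a constant factor'' in the fan comparison is in fact ``$\ge$'' on the nose, which you may as well say.
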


Let $Y=0$ with probability $\alpha$ and $Y=i$ with probability $p_i$ (as given in \eqref{eq:super_pi}) for $i \ge 2$. Let $I_m$ be the number of internal vertices of a free triangulation of an $m$-gon with parameter $\alpha \beta$ for $m\ge 2$ and $I_1=1$. We say a variable $X$ has \textbf{exponential tail} if there exists a constant $c>0$ such that $\P(|X|>i) <e^{-ci}$. We next show that $Y+I_{Y+1}$ has exponential tail.

\begin{lemma}\label{lem:exp_tail}
Let $Y,I_{Y+1}$ be as above. There exists a $\lambda>0$ such that
\[
 \E\left(e^{\lambda(Y+I_{Y+1})}\right) <\infty
\]
 In particular, the number of edges added in each peeling step has exponential tail.
\end{lemma}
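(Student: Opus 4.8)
The plan is to condition on the value of $Y$ and bound separately the exponential moments of $Y$ and the conditional exponential moments of $I_{Y+1}$. The key structural input is that for $\alpha\in(2/3,1)$ we sit \emph{strictly} inside the hyperbolic phase in two senses at once. First, $q:=\alpha\beta=\alpha^2(1-\alpha)/2$ satisfies $q<2/27$ strictly, since $\alpha\mapsto\alpha^2(1-\alpha)/2$ is decreasing on $(2/3,1)$ with value $2/27$ at $\alpha=2/3$; writing $q=\theta(1-2\theta)^2$ with $\theta\in[0,1/6]$ as in \cref{prop:Z} (the map $\theta\mapsto\theta(1-2\theta)^2$ being an increasing bijection of $[0,1/6]$ onto $[0,2/27]$), this forces $\theta<1/6$ with room to spare. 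Second, by \eqref{eq:super_pi} we have $p_i\le C\rho_0^{\,i}$ with $\rho_0:=2/\alpha-2<1$, again with room to spare.

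The second ingredient is a bound on $\E\bigl(e^{\lambda I_m}\bigr)$ that is not too large as a function of $m$. For any $\lambda\ge 0$ with $qe^\lambda\le 2/27$,
\[
\E\bigl(e^{\lambda I_m}\bigr)=\frac{1}{Z_m(q)}\sum_n e^{\lambda n}\phi_{n,m}q^n=\frac{Z_m(qe^\lambda)}{Z_m(q)} .
\]
Let $\theta'\in[\theta,1/6)$ be the parameter attached to $qe^\lambda$; note $\theta'\downarrow\theta$ as $\lambda\downarrow 0$. Substituting the explicit formula of \cref{prop:Z} into the right-hand side, the factorial factors cancel exactly and the polynomial-in-$m$ prefactors contribute only a bounded multiple (using $\theta,\theta'<1/6$), which leaves
\[
\E\bigl(e^{\lambda I_m}\bigr)\le C(\lambda)\,\kappa(\lambda)^{2m},\qquad \kappa(\lambda):=\frac{1-2\theta}{1-2\theta'}\ge 1,\quad m\ge 2,
\]
where \emph{crucially} $\kappa(\lambda)\downarrow 1$ as $\lambda\downarrow 0$. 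In words, inflating the free parameter by the factor $e^\lambda$ costs only a factor exponential in $m$ whose base can be made arbitrarily close to $1$.

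Now I would assemble the pieces. Conditioning on $Y$ and using $I_1=1$,
\[
\E\bigl(e^{\lambda(Y+I_{Y+1})}\bigr)=\alpha e^{\lambda}+\sum_{i\ge 2}p_i\,e^{\lambda i}\,\E\bigl(e^{\lambda I_{i+1}}\bigr)\le \alpha e^{\lambda}+C'(\lambda)\sum_{i\ge 2}\bigl(\rho_0 e^{\lambda}\kappa(\lambda)^2\bigr)^i ,
\]
with $C'(\lambda)=C\cdot C(\lambda)\kappa(\lambda)^2$. The series converges as soon as $\rho_0 e^{\lambda}\kappa(\lambda)^2<1$; since $\rho_0<1$ and $e^{\lambda}\kappa(\lambda)^2\to 1$ as $\lambda\downarrow 0$, we may fix $\lambda>0$ small enough that simultaneously $qe^{\lambda}<2/27$ and $\rho_0 e^{\lambda}\kappa(\lambda)^2<1$, and then $\E\bigl(e^{\lambda(Y+I_{Y+1})}\bigr)<\infty$. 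The final assertion follows because, by Euler's formula, a triangulation of an $(i+1)$-gon with $I_{i+1}$ internal vertices has $3I_{i+1}+2i-1$ edges, so the number of edges revealed in a single peeling step is bounded by a linear function of $Y+I_{Y+1}$ and hence also has an exponential tail.

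The step I expect to be the real obstacle is making that last display genuinely converge: the bound $\E(e^{\lambda I_m})\le C(\lambda)\kappa(\lambda)^{2m}$ beats no \emph{fixed} exponential rate once $m\to\infty$, and it must be absorbed by the $\rho_0^{\,i}$ decay of $p_i$ together with the extra $e^{\lambda i}$ coming from $Y$ itself. This balance is exactly what fails at $\alpha=2/3$, where $\rho_0=1$ and $q=2/27$, so that $\theta=1/6$ and $Z_m(qe^\lambda)=\infty$ for every $\lambda>0$; that is why the proof must use $\alpha>2/3$ twice.
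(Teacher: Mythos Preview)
Your proof is correct and follows essentially the same approach as the paper's: both reduce to showing that $\sum_i e^{\lambda i}\beta^i Z_{i+1}(e^{\lambda}\alpha\beta)<\infty$ for small $\lambda>0$, using that $\alpha\beta<2/27$ strictly so that $Z_m(e^{\lambda}\alpha\beta)$ is finite, and then that the exponential growth of $Z_{i+1}$ in $i$ is beaten by the decay of $\beta^i$ (equivalently of $p_i$). The paper writes the expectation directly as the double sum $\sum_{i,k}e^{\lambda(i+k)}p_{i,k}$ via the exact formula $p_{i,k}=\phi_{k,i+1}\beta^i(\alpha\beta)^k$, whereas you condition on $Y$ and phrase the same computation as $\E(e^{\lambda I_m})=Z_m(qe^{\lambda})/Z_m(q)\le C(\lambda)\kappa(\lambda)^{2m}$; since $p_i=\beta^i Z_{i+1}(q)$ and $\rho_0=4\beta/(1-2\theta)^2$, your convergence criterion $\rho_0 e^{\lambda}\kappa(\lambda)^2<1$ is literally the same inequality the paper invokes when it says ``since $\beta<1/9$''.
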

\begin{proof}
 In light of \eqref{eq:super_pi}, we need to show
\begin{equation}
 \sum_{i\ge 1} e^{\lambda i}\beta^i\sum_{k \ge 0}e^{\lambda k}\phi_{i+1,k}(\alpha\beta)^k <\infty
\end{equation}
But $\alpha\beta <2/27$ if $\alpha \in (2/3,1)$ via \eqref{eq:super_pi}. Hence the sum over $k$ is finite if $\lambda>0$ is small enough via \cref{prop:Z}. Also since $\beta<1/9$,
\[
 \sum_{i\ge 1} e^{\lambda i}\beta^iZ_{i+1}(e^\lambda\alpha \beta)
\]
is also finite for small enough choice of $\lambda$ via the expression of the partition function given by \cref{prop:Z}. The last sentence in the lemma follows from the fact that the number of edges added in each peeling step is dominated by $2Y+3I_{Y+1}-1$ via Euler's formula.
\end{proof}

\subsection{Geometry of supercritical triangulations}\label{sec:geom}

Here we restate some results from \cite{Ray13,AR13} where the supercritical
half planar triangulations were introduced and studied. We start with a
statement about the probability of finite events in $\H_\alpha$, which
could also be used as an alternative definition of $\H_\alpha$.

\begin{lemma}[\cite{AR13}]\label{lem:prob}
  Let $Q$ be a simply connected triangulation with a simple boundary, with
  some marked connected segment of $\partial Q$.  Let $H$ be a half plane
  triangulation, and consider the event $A_Q$ that $Q$ is a sub-map of $H$
  with the marked segment being the only intersection of $Q$ with $\partial
  H$. Then
  \[
  \H(A_Q) =  \alpha^{V(Q)}\beta^{F(Q)-V(Q)}
  \]
  where $V(Q)$ is the number of vertices of $Q$ not on $\partial H$ and
  $F(Q)$ is the number of faces of $Q$.
\end{lemma}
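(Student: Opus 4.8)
The plan is to reveal $Q$ inside $H$ one peeling step at a time, multiply the resulting one-step probabilities using the domain Markov property, and run an induction on the number of faces $F(Q)$. First I would isolate the one-step estimate: if one peels a boundary edge of a half planar triangulation with law $\H$ and lets $S$ denote the revealed triangle together with the finite region (a triangulation of an $(i+1)$-gon, possibly empty) that it separates from infinity, then for every finite simply connected triangulation $S_0$ with a marked boundary edge one has $\P(S=S_0)=\alpha^{V(S_0)}\beta^{F(S_0)-V(S_0)}$, where $V(S_0)$ counts the vertices of $S_0$ off the boundary. For an $\alpha$-step this is immediate, since then $V(S_0)=F(S_0)=1$ and the probability is $\alpha$. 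For a step of type $(L,i)$ or $(R,i)$ separating $k$ internal vertices, the enclosed triangulation is, conditionally, uniform among the $\phi_{k,i+1}$ triangulations of the $(i+1)$-gon with $k$ internal vertices (the free-triangulation description recalled in \cref{sec:peeling}), so by \eqref{eq:super_pi} any fixed one of them has probability $p_{i,k}/\phi_{k,i+1}=\beta^i(\alpha\beta)^k$; on the other hand Euler's formula gives $F(S_0)=i+2k$ and $V(S_0)=k$, so $\alpha^{V(S_0)}\beta^{F(S_0)-V(S_0)}=\alpha^k\beta^{i+k}$, which matches.

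For the induction the base case $F(Q)=0$ is trivial, since then $Q$ is a path on $\partial H$ and $\H(A_Q)=1$. When $F(Q)\ge1$ I would peel an edge $e$ of the marked segment (which contains an edge, being incident to a face of $Q$) and observe that on the event $A_Q$ the revealed slice $S$ is forced to equal the slice $S_0=S_0(Q)$ that the $Q$-triangle on $e$ cuts off; since all vertices of $S_0$ off $\partial H$ are internal to $H$, the event $\{S=S_0\}$ is genuinely contained in $A_Q$. By the domain Markov property, conditioned on $\{S=S_0\}$ the remaining map $H\setminus S_0$ has law $\H$, and $A_Q$ then asserts exactly that $Q'=Q\setminus S_0$ is a sub-map of $H\setminus S_0$ meeting its boundary only along the induced marked segment. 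Because $Q$ is a disk with simple boundary and $S_0$ meets $\partial Q$ along a connected arc, $Q'$ is again a simply connected triangulation with simple boundary and a marked boundary arc, with $F(Q')=F(Q)-F(S_0)<F(Q)$, so the inductive hypothesis applies and yields $\H(A_Q)=\P(S=S_0)\,\H(A_{Q'})=\alpha^{V(S_0)+V(Q')}\beta^{(F(S_0)-V(S_0))+(F(Q')-V(Q'))}$. It then remains to check $F(Q)=F(S_0)+F(Q')$, which is clear, and $V(Q)=V(S_0)+V(Q')$, which follows from a short case analysis according to whether a shared vertex of $S_0$ and $Q'$ lies on $\partial H$.

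I expect the only real obstacle to be this last piece of bookkeeping together with the topological claim that $Q'$ is a well-formed simply connected triangulation: the subtlety is that a vertex of $Q$ lying off $\partial H$ may end up on the boundary of the remaining map $H\setminus S_0$, so that it is counted by $S_0$ rather than by $Q'$, and one must verify these conventions line up so that the two exponents add correctly at each step. The degenerate situation in which $F(Q)\ge1$ but the marked segment is a single vertex (or empty) is not covered by the scheme above; it would be handled by first peeling along $\partial H$ until the revealed region reaches $Q$, a routine variation that I would relegate to a remark.
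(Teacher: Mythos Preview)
The paper does not give its own proof of this lemma; it is quoted as a background result from \cite{AR13}. So there is no proof in the present paper to compare against.

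Your peeling-plus-induction argument is a correct and natural way to establish the formula. The one-step identity $\P(S=S_0)=\alpha^{V(S_0)}\beta^{F(S_0)-V(S_0)}$ is exactly right (and your Euler-formula check is fine), and the induction goes through once the bookkeeping you flag is verified. Two small remarks are worth making explicit.

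First, the sentence ``the revealed slice $S$ is forced to equal the slice $S_0=S_0(Q)$'' hides a genuine point: when the third vertex of the peeled triangle lies on the marked segment, the region swallowed in $H$ is the finite triangulation bounded by the new edge and the intervening boundary edges, and you need that this region is entirely contained in $Q$. This follows because $Q$ has simple boundary and is simply connected: the bounding cycle lies in $Q$, hence bounds a full disk in $Q$, and that disk must coincide (as a submap of $H$) with the swallowed region. It is worth saying this rather than leaving it implicit.

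Second, your closing remark about the ``degenerate situation'' where the marked segment is a single vertex is slightly off: that case is not a gap in the proof but simply outside the scope of the lemma. Indeed the formula would then give, e.g.\ for a single triangle attached at one boundary vertex, $\alpha^{2}\beta^{-1}=2\alpha/(1-\alpha)>1$, which is not a probability. So the marked segment is implicitly assumed to contain at least one edge, and no ``routine variation'' is needed.
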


Recall the definition of $i$-isolated sets from \cref{sec:anchored}: a set
$S$ is $i$-isolated if $i|S|-|\partial S|$ is positive.

\begin{lemma}[{\cite[Proposition 4.10]{Ray13}}]\label{lem:i-isolated}
  For any $\alpha>2/3$ there exists a constant $i=i(\alpha)\in (0,1)$ such
  that the probability that there exists an $i$-isolated connected set of
  vertices $S$ containing the root vertex in $H$ with $|S|\geq n$ is at most
  $\exp(-cn)$.
\end{lemma}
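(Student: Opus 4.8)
The plan is a Peierls-type union bound over the possible shapes of such a set, using \cref{lem:prob} for the per-shape probability. First I would reduce to \emph{filled-in} sets: replacing $S$ by $\widehat S:=S\cup\{\text{finite components of }H\setminus S\}$ one has $\partial\widehat S\subseteq\partial S$ and $|\widehat S|\ge|S|$, so $\widehat S$ is again $i$-isolated with $|\widehat S|\ge n$, and in addition $H\setminus\widehat S$ is connected and $\widehat S\cap\partial H$ is a single arc of $\partial H$. Such a $\widehat S$ is the same data as a simply connected triangulation $Q$ with simple boundary and a marked sub-arc of $\partial Q$, appearing in $H$ with that arc equal to $Q\cap\partial H$; writing $p,k,\ell$ for the perimeter of $Q$, its number of internal vertices and the number of edges of the marked arc, the Euler relations give $|E(Q)|=2p+3k-3$ and $|\widehat S|=2|E(Q)|+|\partial\widehat S|$, while \cref{lem:prob} gives $\H(A_Q)=\alpha^{k+p-\ell-1}\beta^{k+\ell-1}$.

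A union bound of $\H(A_Q)$ over all $Q$ diverges: summing over $k$ gives $Z_p(\alpha\beta)$, which by \cref{prop:Z} grows like $B(\alpha\beta)^p$ with $B(q):=4(1-2\theta)^{-2}$ for $q=\theta(1-2\theta)^2$, and $\alpha\,B(\alpha\beta)>1$ for $\alpha$ near $2/3$ (it equals $6$ at $\alpha=2/3$). The content of the argument is that $i$-isolation forces the neighbourhood of $\widehat S$ in $H$ to be thin, which both improves the per-shape probability and effectively replaces $\alpha\,B(\alpha\beta)$ by $\alpha\beta\,B(\alpha\beta)$, and $\alpha\beta\,B(\alpha\beta)<1$ (it equals $2/3$ at $\alpha=2/3$). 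Accordingly I would union-bound not over $Q$ but over the slightly larger region $Q^+$ obtained from $Q$ by additionally revealing, via the hull-revealing peeling of \cref{sec:peeling}, the faces of $H$ needed to bury every interior vertex of the free boundary of $Q$. Inside the event $\{A_{Q^+}\}$ every edge incident to $\widehat S$ has been revealed, so $|\widehat S|$ and $|\partial\widehat S|$ are functions of $Q^+$, and ``$Q^+$ is the revealed neighbourhood of an $i$-isolated $\widehat S$ with $|\widehat S|\ge n$'' is a legitimate combinatorial condition on $Q^+$. Writing $p^+,k^+,\ell^+$ for the parameters of $Q^+$, the bookkeeping facts I would establish are: every interior vertex of the free boundary of $Q$ is internal to $Q^+$, so $k^+\ge k+p-\ell-O(1)$; also $\ell^+\ge\ell$ and $p^+\ge\ell^+$, hence by \cref{lem:prob} $\H(A_{Q^+})\le(\alpha\beta)^{-1}(\alpha\beta)^{k^+}\beta^{\ell^+}$; and the $i$-isolation together with $|\widehat S|\ge n$ and \cref{lem:exp_tail} (which controls, through the peeling, the size of the revealed collar in terms of $|\partial\widehat S|$) give $k^++\ell^+\ge c\,n$ while the free perimeter $p^+-\ell^+$ is at most an $O(i)$-fraction of $p^++k^+$.

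With these in place the union bound reads, up to a polynomial factor in $p^+$ and for $\varepsilon=\varepsilon(i)\to0$,
\[
\H\bigl(\exists\, i\text{-isolated }S\ni\rho,\ |S|\ge n\bigr)\ \le\ \sum_{\substack{k^++\ell^+\ge cn\\ p^+-\ell^+\le\varepsilon(p^++k^+)}}\mathrm{poly}(p^+)\,\phi_{k^+,p^+}\,(\alpha\beta)^{k^+}\beta^{\ell^+},
\]
and I would finish by summing over $k^+$ first ($\sum_{k^+}\phi_{k^+,p^+}(\alpha\beta)^{k^+}=Z_{p^+}(\alpha\beta)\le C\,B(\alpha\beta)^{p^+}$), so that each term is at most $\mathrm{poly}(p^+)\,(B(\alpha\beta)\beta)^{\ell^+}\,B(\alpha\beta)^{p^+-\ell^+}$, and by splitting into a few regimes according to which of $k^+,\ell^+$ carries the lower bound $cn$ and whether $p^+$ is small or proportional to them. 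In the regime where $\ell^+\gtrsim n$ one uses $B(\alpha\beta)\,\beta<1$ (valid on $(2/3,1)$, with equality exactly at $\alpha=2/3$, which is why the phase transition sits there) after fixing $i$ small enough that $(B(\alpha\beta)\beta)^{c}B(\alpha\beta)^{\varepsilon}<1$; in the regime where $k^+\gtrsim n$ one uses instead that $\alpha\beta<2/27$ (\cref{prop:Z}), so that $\phi_{k^+,p^+}(\alpha\beta)^{k^+}$ decays geometrically in $k^+$ while the remaining parameters, being either small or bounded by a geometrically summable $\beta$-weight, contribute a convergent factor. Choosing the constants in the order $\delta$ (how small $p^+$ is allowed relative to $k^+$), then $\varepsilon$, then $i$, each sub-sum is $\le e^{-c n}$.

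The main obstacle is the one technical point glossed above: the peeling producing $Q^+$ can hit $\partial H$ away from the marked arc (when a vertex of $\widehat S$ is joined to a distant boundary vertex), so that $Q^+\cap\partial H$ is not a single arc and \cref{lem:prob} does not apply verbatim. I would deal with this by absorbing into $Q^+$ the simply connected finite pieces pinched off against $\partial H$; by the domain Markov property and \cref{lem:exp_tail} their total size has an exponential tail, so including them perturbs $(p^+,k^+,\ell^+)$ only by a geometrically controlled amount and the estimates survive with adjusted constants. Beyond this, the only analytic input needed is the inequality $B(\alpha\beta)\,\beta<1$ on $(2/3,1)$ --- a short calculus check from $\beta=\alpha(1-\alpha)/2$ and $q=\theta(1-2\theta)^2$, which degenerates to an equality precisely at $\alpha=2/3$.
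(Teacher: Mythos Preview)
The paper does not give its own proof of this lemma: it is quoted verbatim as \cite[Proposition~4.10]{Ray13} and used as a black box. So there is no ``paper's proof'' to compare against here; what you have written is a sketch of (something close to) the argument that lives in \cite{Ray13}.

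Your overall strategy --- a Peierls union bound over filled-in shapes $Q$, using \cref{lem:prob} for the weight, and pinning down the phase transition at $\alpha=2/3$ through the identity $B(\alpha\beta)\beta=1$ there --- is the correct one and is indeed how the cited result is proved. You have also correctly identified the genuine difficulty: the raw sum $\sum_Q \H(A_Q)$ diverges, and the $i$-isolation constraint must be used to cut it down.

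Two places in your write-up are not yet solid. First, your passage from $Q$ to $Q^+$ mixes a deterministic union bound with a probabilistic control (\cref{lem:exp_tail}) on the collar: for the union bound to be clean you need, for each $i$-isolated $\widehat S$ in $H$, a \emph{canonical} finite $Q^+$ determined by $(\widehat S,H)$, and then the constraint ``$p^+-\ell^+$ is an $O(i)$-fraction of $p^++k^+$'' must be a \emph{deterministic} combinatorial fact about that $Q^+$, not a high-probability statement. As written it is not clear which of these you intend, and the deterministic version does not follow from \cref{lem:exp_tail}. Second, in your final double sum you propose to sum over $k^+$ first to produce $Z_{p^+}(\alpha\beta)\sim B^{p^+}$, but the constraint $p^+-\ell^+\le\varepsilon(p^++k^+)$ couples $k^+$ to $p^+,\ell^+$; dropping it makes the $p^+$-sum diverge since $B>1$, so the regime-splitting you allude to has to be done \emph{before} collapsing the $k^+$-sum, and that part of the argument needs to be spelled out rather than asserted.

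These are repairable, and the repaired argument is essentially the one in \cite{Ray13}; but as it stands the proposal is an outline with the two load-bearing steps still to be filled in.
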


\cref{lem:i-isolated} along with Borel-Cantelli gives the following corollary:

\begin{corollary}[\cite{Ray13}]\label{cor:Hanch}
  Let $i(\alpha)$ be as in \cref{lem:i-isolated}. Then $\H_\alpha$-almost
  surely, the map $H$ has anchored expansion constant $i^*(H) \ge
  i(\alpha)>0$.
\end{corollary}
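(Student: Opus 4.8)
The plan is to obtain this as an immediate consequence of \cref{lem:i-isolated} together with a Borel--Cantelli argument; no additional geometric input is required.

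First I would fix $i=i(\alpha)\in(0,1)$ as in \cref{lem:i-isolated} and take $\rho$ to be the root vertex. For $N\ge 1$ let $E_N$ be the event that $H$ contains an $i$-isolated connected set of vertices $S$ with $\rho\in S$ and $|S|\ge N$, where as usual $|S|$ denotes the weighted volume, i.e.\ the sum of the degrees of the vertices in $S$. By \cref{lem:i-isolated} we have $\H(E_N)\le e^{-cN}$, and since $\sum_N e^{-cN}<\infty$, Borel--Cantelli shows that $\H$-almost surely only finitely many of the $E_N$ occur; as the events $E_N$ are decreasing in $N$, this says exactly that $\H\big(\bigcap_{N\ge 1} E_N\big)=0$. (One could equally well just note $\H(\bigcap_N E_N)\le\inf_N\H(E_N)=0$.)

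It then remains to check the inclusion $\{i^*(H)<i\}\subseteq\bigcap_{N\ge 1} E_N$, which combined with the previous step yields $\H(i^*(H)<i)=0$, i.e.\ $i^*(H)\ge i(\alpha)$ almost surely. Suppose $i^*(H)<i$. Since $i^*(H)=\liminf_n \inf\{|\partial S|/|S| : S\text{ connected},\ \rho\in S,\ |S|=n\}$, there are infinitely many $n$ for which the inner infimum is strictly below $i$, hence infinitely many $n$ admitting a connected set $S\ni\rho$ with $|S|=n$ and $|\partial S|<i|S|$; each such $S$ is $i$-isolated, and these sets have arbitrarily large volume, so $E_N$ occurs for every $N$.

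I do not expect any real obstacle here: the entire content is in \cref{lem:i-isolated}, and the only points to be careful about are matching the degree-weighted meaning of $|S|$ across the two statements and noting that the inner infimum in the definition of $i^*$ being $<i$ along a subsequence $n\to\infty$ genuinely produces $i$-isolated connected sets through $\rho$ of unbounded size.
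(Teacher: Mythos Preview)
Your proposal is correct and matches the paper's own argument: the paper simply states that \cref{lem:i-isolated} together with Borel--Cantelli gives the corollary, and your write-up spells out exactly this (even noting that Borel--Cantelli is a slight over-kill since the events $E_N$ are decreasing). The only minor point worth recording is that for each fixed $n$ the infimum in the definition of $i^*(H)$ is over finitely many connected sets (by local finiteness), hence is attained, which justifies extracting an actual $i$-isolated set $S$ rather than just sets with ratio arbitrarily close to $i$.
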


The following lemma controls the probability of the ball volumes in $H$
being atypically large or small.

\begin{lemma}\label{L:vol}
  Let $B_r$ denote the hull of the graph distance ball of radius $r$ around
  the root vertex in $H$. There exists constants $a,b>1$ and $c>0$
  depending only on $\alpha$ such that
  \[
  \H_\alpha\big( a^r \leq |B_r| \leq b^r \big) \geq 1 - e^{-c r}.
  \]
\end{lemma}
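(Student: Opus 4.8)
The plan is to reveal the hull $B_r$ via the ``peeling to reveal hulls'' procedure of \cref{sec:peeling} and to track two statistics of the growing region: the volume $V_r=|B_r|$ and the length $L_r$ of the active part of its boundary (the edges of $\partial B_r$ not lying on $\partial H$). By translation invariance and the domain Markov property, once $B_r$ is revealed the unexplored region is a fresh half plane triangulation glued along this active boundary of length $L_r$; consequently $(L_r,V_r)$ is a Markov chain, and revealing the next layer amounts to burying each active boundary edge by further peeling steps, each of which---by \cref{lem:exp_tail}---alters $L$ and increments $V$ by amounts with uniform exponential tails. The same stationarity gives $\E[L_{r+1}\mid\mathcal F_r]=\gamma L_r$ and $\E[V_{r+1}-V_r\mid\mathcal F_r]=\kappa L_r$ for constants $\gamma,\kappa>0$ (up to a harmless $O(1)$), and---this is where $\alpha>2/3$ enters, via the computations of \cite{AR13,Ray13}---one has $\gamma>1$; since the $L_k$ then grow geometrically, $V_r\asymp\sum_{k<r}L_k\asymp L_r$.

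For the upper bound, combine $\E[L_{r+1}\mid\mathcal F_r]\le\gamma L_r$ with the exponential-tail increments to see that $L_r^{\theta}e^{-c(\theta)r}$ is a supermartingale for $\theta>0$ small; Doob's maximal inequality then gives $\max_{k\le r}L_k\le b^r$ outside probability $e^{-cr}$, and since the number $N_r$ of peeling steps used to form $B_r$ is $O(\sum_{k<r}L_k)$ for a suitably paced peeling, $N_r\le b^r$ on that event as well, whence $V_r=\sum_{n\le N_r}\xi_n\le Cb^r$ fails with probability at most $e^{-cb^r}\le e^{-cr}$ by a Cram\'er bound on the i.i.d.\ exponential-tail volume increments $\xi_n$. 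For the lower bound it suffices, since $V_r\ge L_r$, to show $\H(L_r<a^r)\le e^{-cr}$: writing $\log L_r=\log L_0+\sum_{k=1}^r Z_k$ with $Z_k=\log(L_k/L_{k-1})$, one checks that conditionally on the past and uniformly over it, $\E[Z_k\mid\mathcal F_{k-1}]\ge\delta_0>0$ with light lower tails, so a Cram\'er bound gives $\H\big(\sum_{k\le r}Z_k<(\delta_0/2)r\big)\le e^{-cr}$; take $a=e^{\delta_0/2}$. For the finitely many small $r$ not covered by this, use $V_r\ge|B_1|\ge 3$ (the root vertex has two distinct boundary neighbours) and choose $a$ close enough to $1$.

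The crux is the lower bound, and within it the uniform control of $Z_k$. The natural layer-by-layer estimate---that $L_k$ drops below a fixed fraction of $L_{k-1}$ with probability at most $e^{-cL_{k-1}}$, because such a drop forces the layer-$k$ peeling to swallow almost the whole active boundary, which is expensive by \cref{lem:exp_tail}---is useless for a union bound over layers, since when $L_{k-1}$ is small this probability is merely a constant, and one bad early layer does not force $V_r$ to stay small forever. What is really needed is the process-level fact that the log-perimeter, being a positive-drift process whose increments are well behaved \emph{uniformly} in the current perimeter (the delicate point being a light lower tail even when the perimeter is $O(1)$), cannot remain anomalously low for $r$ consecutive layers except with probability $e^{-cr}$. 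A secondary and more routine point---essentially contained in \cite{AR13,Ray13}---is to verify that $\gamma>1$ precisely when $\alpha>2/3$ and that the $\alpha$-steps, which lengthen the active boundary, do not inflate the per-layer step count beyond $O(L_{k-1})$.
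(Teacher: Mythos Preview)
Your upper bound sketch is workable but more elaborate than necessary: the paper simply shows $\E|B_r|\le C^r$ via Wald's identity (since $\#E(B_r)$ is a sum of $N_r$ i.i.d.\ exponential-tail increments and $N_{r+1}-N_r$ is bounded in mean by a constant times $\#E(B_r)$), and then Markov's inequality with any $b>C$ gives the $e^{-cr}$ bound directly. No supermartingale or Doob is needed.

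The real issue is your lower bound, and you have essentially flagged it yourself. The Cram\'er-type bound on $\sum_{k\le r}Z_k$ requires that $\E[Z_k\mid\mathcal F_{k-1}]\ge\delta_0>0$ \emph{uniformly} over the past, in particular uniformly over small values of $L_{k-1}$. You assert this (``one checks\dots'') but then concede in the final paragraph that establishing a light lower tail for $Z_k$ when the perimeter is $O(1)$ is ``the delicate point''. It is genuinely delicate: when $L_{k-1}$ is a bounded constant, the event $L_k\le L_{k-1}$ has constant probability, and the positivity of $\E[\log(L_k/L_{k-1})\mid L_{k-1}=\ell]$ for each small $\ell$ is a nontrivial computation that you have not carried out (and which is not in \cite{AR13,Ray13} in this form). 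Without it, the large-deviations step does not go through, and a few bad early layers can in principle keep the process in the small-perimeter regime for a long time.

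The paper sidesteps this entirely by using anchored expansion rather than perimeter dynamics. The key is \cref{lem:i-isolated}: with probability at least $1-e^{-cr}$ there is no $i$-isolated set of size $\ge r$ containing the root. On that single event, since trivially $|B_{r/2}|\ge r$ (boundary vertices alone), one has $|\partial B_k|\ge i|B_k|$ for every $k\ge r/2$, and hence $|B_{k+1}|\ge(1+i)|B_k|$ deterministically for all such $k$. This gives $|B_r|\ge r(1+i)^{r/2}$ with no need to control the perimeter at small scales or to union-bound over layers. The moral is that the exponential lower bound on volume is a one-shot consequence of the anchored expansion estimate, not of tracking the layer-by-layer peeling process.
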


This is closely related to a statement from \cite{Ray13} concerning the
a.s.\ asymptotic behaviour of $|B_r(H)|$.  The arguments below are
variations on arguments used there.

\begin{proof}
  First we prove the lower bound.
  Choose $i=i(\alpha)$ as in \cref{lem:i-isolated}. Clearly, $|B_{r/2}|\ge
  r$ (counting just the boundary vertices).  By \cref{lem:i-isolated}, the
  probability that there is some $i$-isolated set of size at least $r$ containing the root vertex is
  at most $e^{-cr}$ for some $c>0$. Assume there is no such set, then for
  any $k\geq r/2$ we have $|\partial B_k| \geq i|B_k|$. However, each edge
  in $\partial B_k$ contributes $1$ to the weight of some vertex in
  $B_{k+1}\setminus B_k$.  Thus on the event that there is no large
  isolated set we have $|B_{k+1}| \geq (1+i)|B_k|$, which implies $|B_r|
  \geq r(1+i)^{r/2}$. This yields the lower bound with $a=\sqrt{1+i}$.

  \medskip

  For the upper bound, notice that by Markov's inequality it suffices to
  prove $\E(|B_r|) < C^r$ for some $C$.  For this, we use that $|B_r| \leq
  2\#E(B_{r+1})$, where $E(B_{r+1})$ is the set of edges in $B_{r+1}$, and
  bound the expectation of the number of edges in the ball.

  Recall the stopping time $N_r$ from \cref{sec:peeling} which denotes the
  time taken to reveal $B_r$ during the peeling process to reveal hulls. It
  is clear from the description of the peeling process and
  \cref{lem:exp_tail} that $\#E(B_r)$ is a sum of $N_r$ many i.i.d.\
  variables with finite expectation (the number of edges added on each
  step). Since $N_r$ is a stopping time, Wald's identity yields $\E \#
  E(B_r) < c \E N_r$. Since a geometric number of peeling steps is required
  to swallow each vertex on the boundary of $B_r$, we can use the crude
  estimate that $N_{r+1}-N_r$ is dominated by $\# E(B_r)$ many geometric
  variables. Thus $\E(N_{r+1}-N_r) < c'\E \# E(B_r)$. Putting together the
  pieces,
  \[
  \E(N_{r+1}-N_r) < c'\E\# E(B_r) < c c' \E N_r.
  \]
  This implies $\E(N_r) < C^r$ for some $C$, and the desired bound.
\end{proof}

\begin{lemma}\label{lem:1ball}
  Let $B_r$ be the hull of the ball of radius $r$ around the root.  There
  exists a constant $c>0$ such that for all $n\ge 1$
  \[
  \H(|B_1| > n) \leq e^{-cn}.
  \]
\end{lemma}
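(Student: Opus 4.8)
The plan is to reveal $B_1$ by peeling and to recognise $|B_1|$ as a geometric sum of variables with exponential tails. Concretely, I would run the ``peeling to reveal hulls'' algorithm of \cref{sec:peeling} with $r=1$: starting from the root vertex $\rho$, at each step peel an edge of the current frontier incident to $\rho$, and stop at the first time $T$ at which $\rho$ is no longer on the frontier. At that point the revealed sub-map contains $B_1$, so $|B_1|\le 2+\sum_{j=1}^{T}V_j$, where $V_j$ is the number of new vertices added at the $j$-th peeling step. By \cref{lem:exp_tail} each $V_j$ has an exponential tail, uniformly in $j$ and in the revealed past, by the domain Markov property.

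First I would argue that $T$ is stochastically dominated by a geometric random variable. Before time $T$ the vertex $\rho$ sits on the frontier with exactly two frontier-neighbours, and at each step we peel the frontier edge $e$ joining $\rho$ to one of them, say the one on its right. By translation invariance the law of this step does not depend on $e$, and with probability $p_1\ge\beta>0$ its revealed triangle has third vertex equal to the frontier-neighbour of $\rho$ on the \emph{other} side (this is the $(L,1)$- or $(R,1)$-step, according to the orientation of $e$); such a step buries $\rho$ and ends the process. On the complementary event the step is an $\alpha$-step or an $(L,i)/(R,i)$ step that leaves $\rho$ on the frontier — a step burying $\rho$ ``from the left'' would require a chord cutting off an infinite region, which peeling never produces. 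Hence, by the domain Markov property, $\P(T\ge m)\le(1-\beta)^{m-1}$ for all $m\ge1$. Moreover, conditioning on the step not being terminating only restricts its type, so the conditional law of $V_j$ is still dominated by a fixed random variable $W$ with exponential tail.

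To finish, I would dominate $|B_1|$ by $2+\sum_{j=1}^{T'}W_j$ with $T'\sim\mathrm{Geom}(\beta)$ independent of i.i.d.\ copies $W_1,W_2,\dots$ of $W$, and, for $\lambda>0$ small enough that $(1-\beta)\,\E[e^{\lambda W}]<1$ (possible since $\E[e^{\lambda W}]\to1$ as $\lambda\downarrow0$), compute
\[
\E\!\left[e^{\lambda|B_1|}\right]\ \le\ e^{2\lambda}\,\frac{\beta\,\E[e^{\lambda W}]}{1-(1-\beta)\,\E[e^{\lambda W}]}\ <\ \infty ,
\]
so that Markov's inequality yields $\H(|B_1|>n)\le e^{-cn}$ for a suitable $c>0$. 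The step I expect to be the crux is the geometric domination of $T$: one must be sure that $\rho$ genuinely remains on the frontier until the terminating step, so that the ``third vertex is $\rho$'s other frontier-neighbour'' move is available at every step, and that its probability stays bounded below — which is exactly what translation invariance and the domain Markov property provide.
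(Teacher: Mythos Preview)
Your argument has a genuine gap stemming from the meaning of $|B_1|$. In this paper, for a vertex set $S$ the quantity $|S|$ is the sum of degrees of the vertices in $S$ (see \cref{sec:expansion}), not the cardinality. Your inequality $|B_1|\le 2+\sum_{j=1}^T V_j$, with $V_j$ the number of new \emph{vertices} added, is therefore bounding the wrong thing: it controls the vertex count of $B_1$, not $\sum_{v\in B_1}\deg(v)$.

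Replacing $V_j$ by the number $E_j$ of edges added at each step does not close the gap either. When the peeling stops (once $\rho$ is buried) the revealed map is exactly $B_1$, and $\sum_j E_j$ gives you $\#E(B_1)$. But $|B_1|=2\,\#E(B_1)+|\partial B_1|$, and the edges in $\partial B_1$ go from the exposed boundary of $B_1$ into $H\setminus B_1$; their number is not determined by what has been revealed, and there is no a priori bound on the degrees of exposed-boundary vertices. This is precisely why the paper peels one layer further: it uses essentially your argument to show that $\#E(B_1)$ has an exponential tail, and then, by the domain Markov property, peels around each exposed-boundary vertex of $B_1$ (at most $\#E(B_1)$ many, each contributing an exponential-tail number of edges) to control $\#E(B_2)-\#E(B_1)$. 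Since every edge incident to a vertex of $B_1$ lies in $B_2$, this yields $|B_1|\le 2\,\#E(B_2)$. Your geometric domination of $T$ and the moment-generating-function computation are fine and match the paper's reasoning; what is missing is this second round of peeling.
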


\begin{proof}
  We use the peeling process to reveal hulls as described in
  \cref{sec:peeling}.  Notice that it is enough to show the exponential
  tail for the number of edges in $B_2$, since $|B_1|$ is at most twice
  that number.

  \cref{lem:exp_tail} implies that number of edges added in each peeling
  step has an exponential tail.  This along with the fact that it takes
  geometric number of steps to reveal $B_1$ imply that $\#E (B_1)$ has
  exponential tail. The domain Markov property tells us that if we continue
  peeling to reveal the neighbourhood of any given vertex on the boundary
  of $B_1$, the number of edges added also has exponential tail via the
  previous argument. Thus the total number of edges added after revealing
  $B_1$ is at most a sum of $\#E(B_1)$ many variables with exponential tail,
  and hence $\#E(B_2)-\#E(B_1)$ also has exponential tail.
\end{proof}

Another result we quote
from \cite{Ray13} is that the graph distance between two vertices on the boundary is
at least linear in their corresponding distance along the boundary
with high probability. Let us enumerate the boundary vertices as $\{v_i\}_{i \in Z}$ with $v_0$ denoting the root vertex and $v_{i}$, $v_{-i}$ denoting the vertices at a distance $|i|$ along the boundary from the root vertex.
\begin{prop}[\cite{Ray13}, Lemma 4.6]\label{prop:dist_along_bdry}
There exists a constant $t=t(\alpha)>0$ such that
\[
\H(d^H(v_i,v_j) <t |i-j|) < e^{-c|i-j|}
\]
for some $c>0$ depending only on $\alpha$.
\end{prop}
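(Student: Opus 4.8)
The plan is to show that a short path between $v_i$ and $v_j$ forces a triangulated sub-map of $H$ to appear whose pieces tile a boundary portion of total length at least $|i-j|$ and whose complementary boundary is short, and then to bound the probability of such a sub-map using \cref{lem:prob}. Write $d=|i-j|$ and assume $i<j$. On the event $\{d^H(v_i,v_j)<td\}$ fix a geodesic $\gamma$ from $v_i$ to $v_j$, which is a simple path of length $\ell<td$. Cut $\gamma$ at its successive intersections $v_i=v_{c_0},v_{c_1},\dots,v_{c_r}=v_j$ with $\partial H$, obtaining arcs $\gamma_1,\dots,\gamma_r$; each $\gamma_s$ is internally disjoint from $\partial H$, has some length $n_s$ (so $\sum_s n_s=\ell<td$), and joins two boundary vertices at boundary-distance $m_s:=|c_s-c_{s-1}|$ (so $\sum_s m_s\ge d$ by the triangle inequality). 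Since $H$ fills the half plane, $\gamma_s$ and the boundary segment between its endpoints enclose a disk whose fill is a triangulation $Q_s\subseteq H$ of an $(m_s+n_s)$-gon with $m_s$ boundary edges on $\partial H$ and some number $w_s$ of vertices interior to the polygon; by Euler's formula $Q_s$ then has $n_s-1+w_s$ vertices off $\partial H$ and $m_s+n_s+2w_s-2$ faces.

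The next step is to reveal $Q_1,\dots,Q_r$ one at a time. By the domain Markov property the unrevealed part is always distributed as $\H$, so \cref{lem:prob} gives that the probability all the $Q_s$ appear at their prescribed locations equals $\prod_{s}\alpha^{n_s-1+w_s}\beta^{m_s+w_s-1}$. Summing over $w_1,\dots,w_r\ge0$ and over the shapes of the triangulations replaces this by $\prod_s\alpha^{n_s-1}\beta^{m_s-1}Z_{m_s+n_s}(\alpha\beta)$. Now write $\alpha\beta=\theta(1-2\theta)^2$ with $\theta\in(0,\tfrac16)$ (the bound $\theta<\tfrac16$ holds because $\alpha\beta<\tfrac{2}{27}$ for $\alpha\in(2/3,1)$) and put $\rho:=4(1-2\theta)^{-2}=4\theta/(\alpha\beta)$; by \cref{prop:Z} one has $Z_m(\alpha\beta)\le C\rho^m$. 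Thus each factor is at most a constant times $(\beta\rho)^{m_s}(\alpha\rho)^{n_s}$, and the crux of the whole argument is that
\[
\beta\rho=\frac{4\theta}{\alpha}<1 ,
\]
which holds precisely because $\alpha>2/3$ forces $\theta<\tfrac16<\tfrac{\alpha}{4}$ — note also that $\alpha\rho=4\theta/\beta>1$.

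It remains to sum over the number $r$ of arcs, the boundary-distances $m_s$ and arc lengths $n_s$, and the location of the configuration. Since the probability of a configuration factorises over its arcs, summing $\prod_s(\beta\rho)^{m_s}(\alpha\rho)^{n_s}$ over all configurations with $\sum_s m_s=M$ and $\sum_s n_s=N$ is the $y^Mx^N$-coefficient of $\sum_{r\ge1}P(y,x)^r=P(y,x)/(1-P(y,x))$, where $P(y,x):=\sum_{m,n\ge1}(\beta\rho)^m(\alpha\rho)^n y^mx^n$. Since $\beta\rho<1$ we may fix $y>1$ with $\beta\rho\,y<1$, and since $P(y,x)\to0$ as $x\to0$ we may then fix a small $x\in(0,1)$ with $\alpha\rho\,x<1$ and $P(y,x)<1$; that coefficient is then at most $C'y^{-M}x^{-N}$. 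The number of locations of a configuration of span $M$ is $O(M)$, so summing over $M\ge d$ and $N\le td$ — using $y>1$ and $x<1$ — leaves a bound of order $d\,(y^{-1}x^{-t})^{d}$. Choosing $t=t(\alpha)>0$ small enough that $y^{-1}x^{-t}<1$ yields $\H(d^H(v_i,v_j)<td)\le e^{-c|i-j|}$ for a suitable $c=c(\alpha)>0$, as claimed.

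The step I expect to be the real obstacle is the decomposition in the first paragraph together with the independent peeling of the pieces used in the second: one must give a careful planarity argument that the disks enclosed by the arcs of $\gamma$ are essentially disjoint (so that the domain Markov property applies to them successively), and handle the case in which $\gamma$ visits boundary vertices outside $[v_i,v_j]$, so that the configuration has span strictly larger than $d$. The latter causes no quantitative trouble — each extra unit of span is paid for by a factor $\beta\rho<1$, which is also what makes the sum over locations and spans converge — but turning the picture into a clean combinatorial statement is where the work lies. The rest is routine.
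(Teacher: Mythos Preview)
This statement is not proved in the present paper: it is quoted verbatim from \cite[Lemma~4.6]{Ray13} and used as a black box, so there is no ``paper's own proof'' to compare against.

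On its own merits, your outline has the right skeleton and, crucially, identifies the correct mechanism: the bound $\beta\rho=4\theta/\alpha<1$ (equivalently $\theta<\alpha/4$, which holds exactly when $\alpha>2/3$) is what makes the sum over boundary length converge, and your generating-function bookkeeping in the final paragraph is fine once the product formula is justified.

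However, the difficulty you flag at the end is a genuine gap, not merely a matter of writing things out. The disks $D_s$ can nest: for instance if $c_0=0$, $c_1=3$, $c_2=1$ (a geodesic from $v_0$ to $v_1$ via $v_3$), then necessarily $\gamma_2\subset D_1$ and hence $D_2\subset D_1$. In such a case your product $\prod_s \alpha^{n_s-1+w_s}\beta^{m_s+w_s-1}$ is not the $\H$-probability of any event, and summing it over fillings of the $D_s$ independently does not bound the probability that the arcs are present. So the step ``reveal $Q_1,\dots,Q_r$ one at a time \dots\ by the domain Markov property'' is not valid as stated.

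Two standard repairs are available. One is to peel innermost arcs first: after removing an innermost $D_s$ the complement again has law~$\H$, and the next arc now sits over a boundary segment that includes $\gamma_s$; the ``$m$'' parameters change but their total over the original boundary is still at least $d$, and the excess is paid for in $n$'s, so the same inequality $\beta\rho<1$, $\alpha\rho>1$ still closes the sum (with $t$ chosen accordingly). The cleaner repair is to drop the arc decomposition altogether and apply \cref{lem:prob} once to the single simply-connected hull $Q$ bounded below by a segment of $\partial H$ of length $M\ge d$ and above by the outer envelope of $\gamma$, of length $N\le \ell<td$; then the union bound over $(M,N)$ and position is exactly your last paragraph with $r=1$. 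Either way the computation you carried out survives, but the intermediate product formula needs to be replaced by one of these.
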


\begin{lemma}\label{lem:hull1}
  Consider a fixed connected segment $S$ with $n$ vertices on the boundary
  of $H$. Then for every constant $b>0$ there exists a constant $\gamma>0$ such
  that for all $n \ge 1$,
  \[
  \H\left(|S| > \gamma n \right) < \exp(-bn)
  \]
\end{lemma}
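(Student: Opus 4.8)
The quantity $|S|$ to be bounded --- whether it denotes the number of vertices of the hull that the boundary segment $S$ encloses in $H$, or, under the weight conventions of \cref{sec:expansion}, the sum of the $H$-degrees of the $n$ boundary vertices making up $S$ --- is in either case at most a fixed absolute constant $C_0$ times the number of edges of $H$ revealed when one peels $H$ down to that hull (when a vertex of $S$ is absorbed, all of its incident edges lie in the revealed map). So the plan is to reveal this hull by an explicit peeling algorithm and then apply a Chernoff bound, in the spirit of the proofs of \cref{L:vol} and \cref{lem:1ball}. The reason one obtains the sharp rate $e^{-bn}$ is that the number of boundary vertices that must be absorbed is the fixed number $n$, so there is no radius-dependent compounding of peeling steps, unlike for $N_r$ in \cref{L:vol}.

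Concretely, by translation invariance I would take $S$ to be carried by the boundary vertices $v_0,\dots,v_{n-1}$ and run the peeling process of \cref{sec:peeling} under the rule that at step $k$ one peels some edge of $\partial T_k$ incident to a vertex of $\{v_0,\dots,v_{n-1}\}$ that is still on $\partial T_k$, stopping at the time $M$ at which none of $v_0,\dots,v_{n-1}$ remains on the boundary of the unrevealed map. This rule is admissible (the chosen edge depends only on the revealed map $P_k$), and, using one-endedness of half planar triangulations together with the a.s.\ finiteness of each peeling step, the revealed map $P_M$ is a.s.\ finite, so that the hull of $S$ is well defined. Let $E=\#E(P_M)$ and let $\xi_k$ be the number of edges added at step $k$, so that $E\le\sum_{k=1}^{M}\xi_k$ and $|S|\le C_0 E$. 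Two facts control this sum: (i) by the domain Markov property the $\xi_k$ are i.i.d., and by \cref{lem:exp_tail} they have a finite exponential moment $\E e^{\lambda\xi_1}<\infty$ for some $\lambda>0$; and (ii) the number of vertices of $S$ still on $\partial T_k$ is non-increasing, equals $n$ initially, and decreases at each step with probability at least some $c_0>0$ --- this is precisely the ``geometric number of peeling steps swallows each boundary vertex'' estimate already invoked in the proof of \cref{L:vol} --- so $M$ is stochastically dominated by a sum $\sum_{j=1}^{n}G_j$ of $n$ i.i.d.\ $\mathrm{Geom}(c_0)$ variables.

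The conclusion then follows from a two-line Chernoff argument. Given $b>0$: since a geometric variable has a finite exponential moment, its Cram\'er rate function is unbounded, so I can fix $A<\infty$ with $\P\big(\sum_{j=1}^{n}G_j>An\big)\le\tfrac12 e^{-bn}$ for all $n\ge1$, whence $\P(M>An)\le\tfrac12 e^{-bn}$. With this $A$ fixed, for any $\gamma$ we have $\P\big(\sum_{k=1}^{\lfloor An\rfloor}\xi_k>\gamma n/C_0\big)\le e^{-\lambda\gamma n/C_0}\big(\E e^{\lambda\xi_1}\big)^{An}=\exp\big(-n(\lambda\gamma/C_0-A\log\E e^{\lambda\xi_1})\big)$, which is at most $\tfrac12 e^{-bn}$ once $\gamma$ is taken large enough (depending only on $b$, $A$, $\lambda$, $C_0$ and $\E e^{\lambda\xi_1}$). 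Since on $\{M\le An\}$ we have $E\le\sum_{k=1}^{\lfloor An\rfloor}\xi_k$, adding the two bounds gives $\H(|S|>\gamma n)\le\P(E>\gamma n/C_0)\le\P(M>An)+\P\big(\sum_{k\le An}\xi_k>\gamma n/C_0\big)\le e^{-bn}$.

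The large-deviations step here is entirely routine. The only points that will need a little care --- the nearest thing to an obstacle --- are the geometric bookkeeping: verifying that the peeling rule above is admissible and terminates a.s.\ with a finite revealed region, and that each peeling step has a uniformly positive chance of removing an exposed vertex of $S$ from the boundary. Both follow from the description of peeling in \cref{sec:peeling}, the step probabilities recorded in \eqref{eq:super_pi}, and one-endedness, so I do not expect any substantive difficulty beyond this.
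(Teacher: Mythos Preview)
Your proposal is correct and follows essentially the same strategy as the paper: peel along the segment $S$ until every vertex of $S$ has been swallowed, then use that the edges revealed form a sum of i.i.d.\ variables with exponential tails and apply a large deviation bound. The only cosmetic difference is bookkeeping: the paper groups the peeling steps into at most $n$ ``macro-steps'' (one per vertex of $S$, each revealing the hull of the radius-$1$ ball around that vertex in the current complement), and then invokes \cref{lem:1ball} to get a single i.i.d.\ sum of $n$ exponential-tailed variables; you instead count individual peeling steps, bound their number $M$ by a sum of $n$ geometric variables, and do two Chernoff bounds. Both routes are equivalent and equally valid.
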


\begin{proof}
  We prove a stronger result: the number of edges in the hull of the
  neighbourhood of radius $1$ around $S$ has such an exponential tail. We
  use the peeling procedure to reveal hulls as in \cref{sec:peeling} and
  borrow the notations from there. In each step, we choose a vertex $v \in
  S$ which is in $T_n$ and peel until we reveal the hull of the ball of
  radius $1$ around $v$ in $T_n$.  The domain Markov property ensures that
  the number of edges added in such steps are i.i.d.  \cref{lem:1ball}
  implies these have an exponential tail. Since there are at most $n$ such
  steps, the lemma follows by a standard large deviation estimate.
\end{proof}

As a corollary of \cref{prop:dist_along_bdry,lem:hull1}, we get
\begin{corollary} \label{cor:ball_boundary}
There exists a $t' = t'(\alpha)$ such that
 \[
    \H(|B_n(H)\cap \partial H| >t'n)<\exp(-cn)
 \]
for some constant $c>0$ depending only on $\alpha$.
\end{corollary}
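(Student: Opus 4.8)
The plan is to bound the number of boundary vertices of $H$ that can be reached within graph distance $n$ of the root by combining the two quoted results in the obvious way. The key observation is that if a boundary vertex $v_j$ lies in $B_n(H)$, then $d^H(v_0, v_j) \le n$, but \cref{prop:dist_along_bdry} says that with overwhelming probability $d^H(v_0,v_j) \ge t|j|$. So on the complement of a rare event, $|j| \le n/t$, which already forces $|B_n(H) \cap \partial H| \le 2n/t + 1$. However, \cref{prop:dist_along_bdry} only controls a single pair $(v_i,v_j)$, so I need a union bound over the relevant range of indices to rule out that \emph{any} far-away boundary vertex is close to the root.

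Here is the sequence of steps. First, fix the constant $t = t(\alpha)$ from \cref{prop:dist_along_bdry}. For each $j$ with $|j| \ge n/t$, \cref{prop:dist_along_bdry} (applied to the pair $v_0, v_j$) gives $\H\big(d^H(v_0,v_j) < t|j|\big) < e^{-c|j|} \le e^{-cn}$, and since $|j| \ge n/t$ we also have $t|j| \ge n$, so on the complementary event $v_j \notin B_n(H)$. Summing the geometric series $\sum_{|j|\ge n/t} e^{-c|j|}$ still yields a bound of the form $e^{-c'n}$ (absorbing constants). Hence with probability at least $1 - e^{-c'n}$, no boundary vertex $v_j$ with $|j| \ge n/t$ lies in $B_n(H)$; on this event the boundary vertices in $B_n(H)$ are among $\{v_j : |j| < n/t\}$, of which there are at most $2n/t$. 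Taking $t' = 3/t$ (say) gives $|B_n(H)\cap \partial H| \le t'n$ on an event of probability $\ge 1 - e^{-c'n}$, which is the claim. (Strictly, \cref{lem:hull1} is not even needed for this argument as I have set it up, since we are only counting the boundary vertices actually at graph distance $\le n$, not all vertices of the hull; but if the intended reading of $B_n(H) \cap \partial H$ includes hull vertices on the boundary, one additionally invokes \cref{lem:hull1} with the segment $S = \{v_j : |j| < n/t\}$ and a suitable $b$ to control the size of the hull of a neighbourhood of $S$, again at an exponential cost.)

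The one point requiring a little care — and the place I expect to spend the most thought — is making sure the union bound closes, i.e.\ that the single-pair estimate $e^{-c|j|}$ decays fast enough in $|j|$ that $\sum_{|j| \ge n/t} e^{-c|j|}$ is itself exponentially small in $n$ rather than merely summable. This is immediate here because the tail is geometric, so $\sum_{|j|\ge n/t} e^{-c|j|} \le C e^{-cn/t}$, and renaming $c/t$ as the new constant $c$ finishes it. A secondary subtlety is that \cref{prop:dist_along_bdry} is stated for distances between two boundary vertices both at finite index, so one should note that the estimate is uniform in $j$ with the same constant $c$ (which is what ``for some $c>0$ depending only on $\alpha$'' asserts), allowing the sum over $j$ to be taken with a fixed rate. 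Beyond that, the proof is a routine Borel–Cantelli-free deterministic argument conditioned on one rare event, and no new geometric input is needed.
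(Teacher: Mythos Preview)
Your approach is correct and is exactly the intended argument: the paper states this corollary without proof, citing precisely \cref{prop:dist_along_bdry} and \cref{lem:hull1}, and your two-step plan (bound the index range of boundary vertices in $B_n$ via \cref{prop:dist_along_bdry} and a union bound, then apply \cref{lem:hull1} to that segment) is the natural way to combine them.

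One clarification worth making: in this paper $|S|$ for a vertex set $S$ denotes the \emph{sum of degrees} (see \cref{sec:expansion}), not the cardinality. So $|B_n(H)\cap\partial H|$ is the total degree of the boundary vertices in $B_n$, and your primary argument (which bounds only the number of such vertices by $2n/t$) does not by itself finish. This is the actual reason \cref{lem:hull1} is needed, not the hull-versus-ball distinction you raise in your parenthetical: once you know $B_n\cap\partial H\subseteq\{v_j:|j|<n/t\}$, applying \cref{lem:hull1} to that segment of $\sim 2n/t$ vertices gives $|B_n\cap\partial H|\le\gamma\cdot(2n/t)$ except with probability $e^{-cn}$. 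Your hedge already invokes \cref{lem:hull1} on the right segment, so the proof goes through; just correct the stated reason.
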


\subsection{Stochastic hyperbolic triangulations}
\label{sec:PSHIT}

In \cite{PSHIT}, Curien constructs a one parameter family of measures which
we denote $\mathbb F_\kappa$ for $\kappa \in (0,2/27]$ which are supported
on full plane triangulations.  These are full plane analogues of the half
plane maps $\H_\alpha$ considered in this paper, where $\kappa=\alpha\beta
= \alpha^2(1-\alpha)/2$.  There is a close connection between the
half-plane and full-plane hyperbolic triangulations which allows us to make
use of some of Curien's results.  We denote a sample of $\mathbb F_\kappa$ by
$F_\kappa$, or just $F$.

We shall need some properties of $\F_\kappa$, stated in \cite{PSHIT}.  By
its definition, for a finite simply connected triangulation $t$ with simple
perimeter $p$ we have that $\F_\kappa(t\subset F) = C_p \kappa^{|t|}$,
where $|t|$ is the number of vertices in $t$ (for notational clarity, throughout this section $|.|$ denotes the number of vertices), and $C_p$ is some sequence of
positive numbers (depending implicitly on $\kappa$).  Moreover, if $\tilde
C_p = \beta^p C_p$ with $\beta=\frac{\alpha(1-\alpha)}{2} =
\frac{\kappa}{\alpha}$ then $\tilde C_p$ is increasing and converges.

The following is noted without proof in \cite{PSHIT}, and we
include a proof here.

\begin{lemma}\label{lem:coupling}
  Set $\kappa = \frac{\alpha^2(1-\alpha)}{2}$. There exists a coupling
  between $\H_\alpha$ and $\mathbb F_\kappa$ such that almost surely,
  $H_\alpha \subset F_\kappa$, where the inclusion need not map the root of
  $H$ to the root of $F$.
\end{lemma}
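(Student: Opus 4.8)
The plan is to realize $H_\alpha$ inside $F_\kappa$ as a region coupled to the complement of a large simply connected sub-map of $F_\kappa$; the inclusion will be anchored near the boundary of the removed sub-map, not at the root of $F$, which is why the root of $H$ need not map to the root of $F$. Two ingredients are needed: the spatial Markov property of $\F_\kappa$, and the fact that the peeling dynamics of ``$\F_\kappa$ with a hull of perimeter $p$ removed'' approach those of $\H_\alpha$ as $p\to\infty$.

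\emph{Spatial Markov property.} If $t\subset F_\kappa$ is finite, simply connected, with simple boundary of perimeter $p$ and $F\setminus t$ one-ended, then the conditional law of $F\setminus t$ given $t$ depends only on $p$; denote it $\F^{(p)}_\kappa$. This is immediate from $\F_\kappa(t\subset F)=C_p\kappa^{|t|}$. Peeling one boundary edge of $\F^{(p)}_\kappa$ and using the same identity, the probability of revealing a new internal vertex is $C_{p+1}\kappa/C_p$, and the probability of a left- or right-$i$ step whose enclosed triangulation of the $(i+1)$-gon is a prescribed $\tau$ with $|\tau|$ internal vertices is $C_{p-i}\kappa^{|\tau|}/C_p$; in particular this enclosed triangulation is free of parameter $\kappa$, exactly as for a peeling step of $\H_\alpha$ (\cref{sec:peeling}). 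With $\kappa=\alpha\beta$ and $\tilde C_\bullet=\beta^\bullet C_\bullet$ these probabilities become $\alpha\,\tilde C_{p+1}/\tilde C_p$ and $\beta^i(\tilde C_{p-i}/\tilde C_p)\kappa^{|\tau|}$, while for $\H_\alpha$ the same events have probabilities $\alpha$ and $\beta^i\kappa^{|\tau|}$ (\eqref{eq:super_pi}). Since $\tilde C_p$ increases to a finite limit, $\tilde C_{p\pm i}/\tilde C_p\to1$, so a single peeling step of $\F^{(p)}_\kappa$ converges in law to one of $\H_\alpha$; moreover the requirement that these probabilities sum to $1$ is a linear recursion for $(\tilde C_p)$ whose increasing bounded solution converges geometrically, so (using also the geometric tail of $p_i$ in \eqref{eq:super_pi}) $\delta_p:=\bigl\|(\text{step of }\F^{(p)}_\kappa)-(\text{step of }\H_\alpha)\bigr\|_{\mathrm{TV}}\le Ce^{-cp}$.

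\emph{The coupling.} Since $\F_\kappa$ is hyperbolic — it has anchored expansion and exponential volume growth (\cite{PSHIT}) — a.s.\ for every $L$ one can reveal, by a hull exploration around the root of $F_\kappa$, a finite simply connected $P^{(L)}\subset F_\kappa$ with perimeter $\pi_L\ge L$. Given $P^{(L)}$, $F\setminus P^{(L)}\sim\F^{(\pi_L)}_\kappa$; run inside it the hull-revealing peeling of \cref{sec:peeling} from a boundary edge of $\partial P^{(L)}$, coupled step by step, at the level of single peeling steps — via the optimal coupling of single-step laws — with the analogous hull exploration of a copy $H^{(L)}$ of $\H_\alpha$. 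On a matched step both explorations reveal the same triangle and the same enclosed free triangulation, so the explored part of $H^{(L)}$ remains a sub-map of $F\setminus P^{(L)}\subset F$; the coupling fails at a step with unexplored perimeter $\pi$ with conditional probability at most $\delta_\pi\le Ce^{-c\pi}$, and the unexplored perimeter stays at least of order $\pi_L\ge L$ throughout (its active-frontier part only grows — by \cref{L:vol,cor:Hanch} applied to $H^{(L)}$ — except with probability $Ce^{-cL}$, by \cref{lem:exp_tail}). Hence $\P(\text{attempt }L\text{ fails})\le Ce^{-cL}$, so $\sum_L\P(\text{attempt }L\text{ fails})<\infty$, and by Borel--Cantelli a.s.\ all large attempts succeed, in which case $H^{(L)}\subset F$.

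\emph{The main obstacle} is to pass from this (a.s., all large attempts succeed) to a genuine coupling of \emph{one} copy of $\H_\alpha$ with $\F_\kappa$. A soft compactness argument starting from ``for each $\varepsilon$ there is a coupling with $\P(H\subset F)\ge1-\varepsilon$'' is not available, because $\{H\subset F\}$ is not closed once the anchor of the embedding is allowed to recede to infinity in $F$ — and it must recede, since near the root of $F$ the local perimeters are bounded and $F$ does not look like $\H_\alpha$ there. One must therefore build the coupling by hand, arranging the attempts coherently (so that a single $\H_\alpha$-sample, built from $\F_\kappa$'s randomness with independent corrections whose total probability is summable in $L$, is the output for all large $L$) and checking that this selection does not bias the law of $H$; this is where the bulk of the work lies. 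Alternatively one may take Curien's construction of $\F_\kappa$ from \cite{PSHIT} as the starting point and read the inclusion off it directly, the computation of $\delta_p$ above being what reconciles that construction with the characterization of $\F_\kappa$ by finite-configuration probabilities.
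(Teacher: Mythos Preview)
You have correctly identified the gap in your own argument: your construction produces, for each $L$, a sample $H^{(L)}\sim\H_\alpha$ together with an embedding $H^{(L)}\hookrightarrow F$ that succeeds with probability $1-Ce^{-cL}$, but turning these into a single almost-sure coupling of \emph{one} $H$ with $F$ is precisely the hard step, and you do not carry it out. Neither of your suggested fixes is short: arranging the attempts ``coherently'' so that a single unbiased $\H_\alpha$-sample emerges is delicate (the first successful $L$ is random and correlated with $H^{(L)}$), and deferring to Curien's construction is not a proof here.

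The paper avoids this obstacle entirely by reversing the direction of the construction: rather than trying to match an \emph{external} sample of $\H_\alpha$ to $F$, it \emph{defines} $H$ as an unexplored region of $F$. Concretely, one peels $F$ from the root while maintaining a marked edge $e_n$ on the exposed boundary of $P_n$, always peeling at the boundary edge \emph{farthest} from $e_n$ (and only relocating $e_n$ if it gets swallowed). A separate lemma --- essentially your observation that the single-step increment $X_p$ stochastically dominates $X_\infty$ with $\E X_\infty>0$ --- gives $|\partial P_n|\ge cn$ eventually; on that event the probability that $e_n$ is swallowed at step $n$ is at most $p_{\lfloor cn/2\rfloor}$, which is summable, so by Borel--Cantelli $e_n$ is eventually constant. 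The complement $F\setminus\bigcup_n P_n$, rooted at the surviving edge, is then a half-plane triangulation whose law is $\H_\alpha$ by the convergence result you establish in your first paragraph (\cref{lem:comp_conv} in the paper). No step-by-step TV coupling is needed, no quantitative rate $\delta_p\le Ce^{-cp}$ is needed, and no selection bias can arise, since $H$ is simply a measurable function of $F$.
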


\begin{lemma}\label{lem:comp_conv}
  Let $\{t_i\}_{i\ge 1}$ be a sequence of finite triangulations where $t_i$
  is a triangulation of a $p_i$-gon and $p_i \to \infty$ as $i \to
  \infty$. Let $\mathcal N_i$ denote the event that $t_i$ can be realized
  as a submap of $F$ with coinciding roots. Conditioned on $\mathcal N_i$,
  the distribution of $F \setminus t_i$ with a given root on the boundary
  of $t_i$ converges in distribution to $H$ as $i\to \infty$.
\end{lemma}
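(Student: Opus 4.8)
The plan is to identify the limit by computing, for every fixed finite candidate ``neighbourhood'' $Q$ of the root, the limiting conditional probability that $Q$ occurs in $F\setminus t_i$, and to match it with the value prescribed by \cref{lem:prob}. Concretely, let $Q$ be a finite simply connected triangulation with simple boundary and a marked connected segment of $\partial Q$ of length $\ell$ containing the root edge, and let $A_Q^{(i)}$ be the event that $Q$ occurs inside $F\setminus t_i$, glued to $\partial t_i$ exactly along its marked segment and meeting $t_i$ in nothing else (with roots matching). Since the law of a half planar triangulation is characterized by \cref{lem:prob}, i.e.\ by the collection of probabilities $\H_\alpha(A_Q)=\alpha^{V(Q)}\beta^{F(Q)-V(Q)}$ as $Q$ ranges over all such maps, and these events form a convergence-determining family for the local topology, it suffices to show
\[
\F_\kappa\big(A_Q^{(i)}\mid \mathcal N_i\big)\ \longrightarrow\ \alpha^{V(Q)}\beta^{F(Q)-V(Q)}\qquad\text{as }i\to\infty,
\]
where $V(Q)$ is the number of vertices of $Q$ not on the marked segment and $F(Q)$ is its number of faces.

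Now the computation. The event $\mathcal N_i$ is just $\{t_i\subset F\}$ with coinciding roots, so $\F_\kappa(\mathcal N_i)=C_{p_i}\kappa^{|t_i|}$. For $i$ large (so $p_i>\ell$), the event $A_Q^{(i)}\cap\mathcal N_i$ is exactly the event that the finite simply connected triangulation $R_i$ obtained by gluing $Q$ onto $t_i$ along the marked segment is a submap of $F$ with coinciding roots; this $R_i$ has simple perimeter $p_i+|\partial Q|-2\ell$ and $|t_i|+|Q|-(\ell+1)$ vertices, so $\F_\kappa(A_Q^{(i)}\cap\mathcal N_i)=C_{p_i+|\partial Q|-2\ell}\,\kappa^{|t_i|+|Q|-\ell-1}$. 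Dividing and using $C_p=\beta^{-p}\tilde C_p$,
\[
\F_\kappa\big(A_Q^{(i)}\mid \mathcal N_i\big)=\frac{C_{p_i+|\partial Q|-2\ell}}{C_{p_i}}\,\kappa^{|Q|-\ell-1}=\beta^{2\ell-|\partial Q|}\,\frac{\tilde C_{p_i+|\partial Q|-2\ell}}{\tilde C_{p_i}}\,\kappa^{|Q|-\ell-1}.
\]
Since $p_i\to\infty$ and $\tilde C_p$ is increasing and convergent, the ratio of $\tilde C$'s tends to $1$, giving the limit $\beta^{2\ell-|\partial Q|}\kappa^{|Q|-\ell-1}$. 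Finally $V(Q)=|Q|-\ell-1$, Euler's formula for a triangulation of a $|\partial Q|$-gon gives $F(Q)=2|Q|-|\partial Q|-2$, hence $F(Q)-V(Q)=|Q|-|\partial Q|+\ell-1$; plugging $\kappa=\alpha\beta$ into the limit reproduces exactly $\alpha^{V(Q)}\beta^{F(Q)-V(Q)}$, as required.

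The steps needing care are mostly soft. One must check that on $\mathcal N_i$ the complement $F\setminus t_i$ is genuinely a one-ended triangulation whose boundary is the simple $p_i$-cycle $\partial t_i$, so that the events $A_Q^{(i)}$ are well posed and $R_i$ is a legitimate simply connected triangulation with simple perimeter (this is where excluding extra intersections of $Q$ with $t_i$ in the definition of $A_Q^{(i)}$ is used); the perimeter and vertex counts for $R_i$, and the Euler count for $F(Q)$, are then routine bookkeeping. The only genuine point is the passage from convergence of the probabilities $\H_\alpha(A_Q)$ to convergence in distribution onto a bona fide half planar triangulation, and this is precisely where the hypothesis $p_i\to\infty$ enters: it is what makes the finite boundary cycle of $F\setminus t_i$ ``open up'' into a bi-infinite boundary in the limit, and it is also the only place where the nontrivial input quoted from \cite{PSHIT}, namely the monotonicity and convergence of $\tilde C_p$, is used. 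I expect this to be the crux, although it is a standard type of argument in this area.
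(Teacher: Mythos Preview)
Your proof is correct and follows essentially the same route as the paper's: compute $\F_\kappa(q\subset F\setminus t\mid t\subset F)=\frac{C_{p'}}{C_p}\kappa^{|t'|-|t|}$, pass to the limit using the convergence of $\tilde C_p=\beta^p C_p$, and identify the limiting cylinder probabilities with the $\H_\alpha$-formula of \cref{lem:prob}. You have simply made the perimeter/vertex bookkeeping and the Euler-formula matching explicit, whereas the paper leaves these as one line.
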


\begin{proof}
  Suppose $t\subset t'$ are two finite triangulations with simple
  boundaries of length $p,p'$, and that $t'$ is constructed by gluing to
  $t$ some finite triangulation $q$ along some segment of $t$'s boundary. A
  consequence of the formula for $\F_\kappa(t\subset F)$ is that
  \[
  \F_\kappa(t'\subset F | t\subset F)
  = \frac{C_{p'}}{C_p} \kappa^{|t'|-|t|}
  = \frac{C_{p'}}{C_p} \kappa^{|q|}.
  \]
  In particular, we see that the probability of containing $t\cup q$
  conditioned on containing $t$ depends on $t$ only through $p$ (i.e.\ the
  law of $F\setminus t$ depends only on $p$. Moreover, as $p\to\infty$ we
  get that the conditional probability of containing $q$ tends to
  $\beta^{p'-p}\kappa^{|q|}$. From the relation $\kappa=\alpha\beta$ we see
  that this is exactly the $\H_\alpha(q\subset H)$.
\end{proof}

\begin{lemma}\label{L:boundary_growth}
  If $(T_i)$ is a sequence of finite sub-triangulations of $F$, each
  generated by peeling at one edge of the previous, then for some $c>0$, a.s.\
  $|\partial T_i| > c i$ for all large enough $i$.
\end{lemma}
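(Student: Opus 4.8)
The plan is to control the perimeter $p_i:=|\partial T_i|$ directly. The first point is that $(p_i)_{i\ge0}$ is a Markov chain whose transition probabilities depend only on the current perimeter, and in particular not on which edge of $\partial T_i$ is chosen for the next peeling step: by translation invariance of $\F_\kappa$, the conditional law of $F\setminus T_i$ given $T_i$ depends on $T_i$ only through $|\partial T_i|$, which is precisely the computation carried out in the proof of \cref{lem:comp_conv}. So it is enough to understand the increments of this chain.

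Next I would write down those increments. Using $\F_\kappa(t'\subset F\mid t\subset F)=\frac{C_{p'}}{C_p}\kappa^{|t'|-|t|}$ and the fact that $\tilde C_p=\beta^p C_p$ is increasing, one checks that, starting from perimeter $p$, the next peeling step is an $\alpha$-step (so the perimeter becomes $p+1$) with probability $\alpha\,\tilde C_{p+1}/\tilde C_p\ge\alpha$, and for each $1\le m\le p-2$ it swallows an $(m+1)$-gon (the perimeter becoming $p-m$) with probability $2\,p_m\,\tilde C_{p-m}/\tilde C_p\le 2p_m$, where $p_m$ is as in \eqref{eq:super_pi}. These exhaust the possibilities, and the only perimeter-increasing step is the $\alpha$-step, which raises it by exactly $1$. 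Consequently, conditionally on the past, $p_{i+1}-p_i$ stochastically dominates the random variable $\xi$ equal to $+1$ with probability $\alpha$ and to $-m$ with probability $2p_m$ for $m\ge1$; this is a bona fide probability distribution because every peeling step in $\H_\alpha$ is an $\alpha$-step or a step of the form $(L,m)$ or $(R,m)$, so $\alpha+\sum_{m\ge1}2p_m=1$.

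Finally I would invoke a law of large numbers. Since $\alpha\in(2/3,1)$, the asymptotics \eqref{eq:super_pi} give $\sum_{m}m\,p_m<\infty$, so $\mu:=\E\xi=\alpha-2\sum_{m\ge1}m\,p_m$ is finite; and $\mu>0$, which is exactly the statement that $\F_\kappa$ lies in its hyperbolic phase for these $\kappa$ and is available from \cite{PSHIT} (alternatively it follows by a direct estimate from \eqref{eq:super_pi}). The stochastic domination gives $\E[p_{i+1}-p_i\mid\mathcal F_i]\ge\mu$ for every $i$, so $p_i-\mu i$ is a submartingale whose increments are bounded above by $1$ and, by \cref{lem:exp_tail} together with the comparison above, have a uniformly exponentially small lower tail; a strong law for such martingale increments then yields $\liminf_i p_i/i\ge\mu$ almost surely, which is the claim with $c=\mu/2$. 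Equivalently, one may couple $(p_i)$ from below with the random walk $\sum_{j\le i}\xi_j$ and apply the classical strong law.

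The only delicate point is the second paragraph: one has to enumerate carefully the effect of each peeling outcome on the perimeter — in particular that no outcome keeps the perimeter fixed or raises it by more than one — and to deduce the monotonicity in $p$ of the transition probabilities from the monotonicity of $\tilde C_p$. The positivity $\mu>0$ is essential, since the statement is false when $\mu\le0$, but it is part of the known description of the hyperbolic regime rather than something to be established here.
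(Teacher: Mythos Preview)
Your approach is essentially identical to the paper's: compare the increment $X_p$ of the perimeter at boundary length $p$ with its $p\to\infty$ limit $X_\infty$, use monotonicity of $\tilde C_p$ to obtain $\P(X_p=-m)=\frac{\tilde C_{p-m}}{\tilde C_p}\P(X_\infty=-m)\le\P(X_\infty=-m)$ and hence stochastic domination, then cite the positive drift $\E X_\infty>0$ (the paper cites \cite[Lemma~4.2]{Ray13}) and conclude via the law of large numbers for the dominated i.i.d.\ walk. Your write-up is if anything slightly more careful, in that you explicitly verify $\P(X_p=+1)=\alpha\,\tilde C_{p+1}/\tilde C_p\ge\alpha$ and keep track of the factor of $2$ in $\P(X_\infty=-m)=2p_m$.
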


\begin{proof}
  Let $X_p$ be the increment in the boundary size when performing one
  peeling step on a triangulation with boundary size $p$ (so that
  $X_p\in\{1,-1,-2,-3,\dots\}$). Let $X_\infty$ take the value $1$ with
  probability $\alpha$ the value $-i$ with probability $p_i$ for $i \ge 1$.
  From the above discussion we have   that 
  \[
  \P(X_p = -i)
  = \frac{C_{p-i}}{\beta^i C_p} \P(X_\infty=-i)
  = \frac{\tilde C_{p-i}}{\tilde C_p} \P(X_\infty=-i).
  \]
  Since $\tilde C_p$ is increasing, we deduce that $X_p$ stochastically
  dominates $X_\infty$.

  It is known (\cite[Lemma~4.2]{Ray13}) that $\E X_\infty>0$. We therefore have that
  $|\partial T_i|$ is a Markov chain with steps that stochastically
  dominate i.i.d.\ copies of $X_\infty$. The claim follows by the law of
  large numbers.
\end{proof}

\begin{proof}[Proof of \cref{lem:coupling}]
  The idea is to show that it is possible to perform infinitely many
  peeling steps in $F_\kappa$ so that some infinite component of the map
  remains unexplored, and that the unexplored region has law
  $\H_\alpha$. By the \cref{lem:comp_conv}, this will hold as long as the
  boundary of the revealed maps tends to infinity, and some of $F$ remains
  unrevealed.

  We will perform the peeling procedure to reveal a growing sequence of
  neighbourhoods $\{P_n\}_{n\ge 1}$ around the root of $F$ as follows. Having defined
  some edge $e_n$ in the boundary of $P_n$, we peel at the edge {\em farthest}
  from $e_n$ along the boundary to get $P_{n+1}$. If $e_n$ is also on the
  boundary of $P_{n+1}$ then set $e_{n+1}=e_n$. Otherwise, pick
  arbitrarily some new edge on the boundary to be $e_{n+1}$.

  Let $A_n$ be the event that $|\partial P_n| \geq cn$, with $c$ from
  \cref{L:boundary_growth}. On $A_n$, the probability that $e_n$ is
  swallowed in the peeling step is exponentially small.  By Borel-Cantelli,
  a.s., there are only finitely many $n$ for which $A_n$ holds and $e_n$ is
  swallowed.  By \cref{L:boundary_growth}, $A_n$ a.s.\ holds for all but
  finitely many $n$.  Thus $e_n$ is eventually constant, and we are done.
\end{proof}

\section{Positive speed away from the boundary}\label{sec:3}

In this section we prove \cref{T:speed}. Random walks in $H$ will always
start from the root vertex $\rho$ unless otherwise stated.  Our strategy is
to prove that the simple random walk in $H$ hits the boundary finitely
often almost surely and then use the coupling in \cref{lem:coupling} to get
positive liminf speed away from the root.  Using this we then prove
positive liminf speed away from the boundary by an application of the
Carne-Varopoulos bound.  To carry out this plan we first prove in
\cref{lem:weak_speed} a weaker lower bound of $n/\log^3n$ on
$d^{H}(X_n,\rho)$.

\medskip

Curien proves in \cite[Theorem 3]{PSHIT} that the simple random walk on
$F_\kappa$ has positive speed almost surely for any $\kappa \in (0,2/27)$.
His proof is rather indirect, though we note that it is possible to prove
this more directly using the ideas of \cite{B00}, with many of the
ingredients already appearing in \cite{PSHIT}: anchored expansion, and
upper exponential growth give positive speed for the time in the ocean;
ergodicity of $F$ w.r.t.\ the random walk implies that a positive fraction
of time is in the ocean, hence positive speed for the random walk.

\begin{lemma}\label{lem:pos_prob}
  Let $(X_n)$ be a simple random walk on $H$ starting from the root vertex
  $X_0=\rho$.  There exists a constant $s=s(\alpha)>0$ such that almost
  surely either $\liminf{d^H(X_n,\rho)/n \geq s}$ or else there exits an
  $n\ge 1$ such that $X_n \in \partial H$.
\end{lemma}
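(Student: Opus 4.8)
The plan is to combine the coupling of \cref{lem:coupling}, which realizes $H$ as a submap of the full-plane triangulation $F=F_\kappa$ with $\kappa=\alpha^2(1-\alpha)/2$, with Curien's theorem \cite[Theorem 3]{PSHIT} that the simple random walk on $F_\kappa$ has a deterministic positive speed whenever $\kappa\in(0,2/27)$ --- which is the case here, since $\alpha\mapsto\alpha^2(1-\alpha)/2$ is decreasing on $(2/3,1)$ and equals $2/27$ at $\alpha=2/3$. The point that makes this work is a locality observation: if $v$ is an internal vertex of $H$ (that is, $v\notin\partial H$), then all faces of $H$ incident to $v$ are triangles, and under the inclusion $H\subset F$ these are faces of $F$; hence $v$ has exactly the same neighbours, and the same degree, in $H$ and in $F$. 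In particular the simple random walk transition probabilities out of any internal vertex agree in $H$ and in $F$, and moreover from an internal vertex of $H$ the $F$-walk cannot reach a vertex outside $H$ without first hitting $\partial H$ (internal vertices of $H$ have no $F$-edge leaving $H$).

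First I would fix the coupled pair $H\subset F$ and run the simple random walk $(X_n)$ on $H$ from $\rho$, setting $\sigma=\inf\{n\ge 1:X_n\in\partial H\}$. On $\{\sigma<\infty\}$ the second alternative in the statement holds, so it suffices to treat $\{\sigma=\infty\}$. Conditioning on $X_1=v$ (necessarily an internal vertex of $H$ when $\sigma\ge 2$) and using the strong Markov property, the locality observation shows that the law of $(X_{n+1})_{n\ge 0}$ restricted to $\{\sigma=\infty\}$ coincides with the law of a simple random walk $(\tilde X_n)_{n\ge 0}$ on $F$ started from $v$, restricted to the event that $(\tilde X_n)$ never visits $\partial H$. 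If that event has probability zero there is nothing to prove; otherwise no lower bound on its probability is needed, since we are only transferring an almost-sure statement.

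By \cite[Theorem 3]{PSHIT} there is a deterministic $s=s(\alpha)>0$ with $\liminf_n d^F(\tilde X_n,v)/n\ge s$ almost surely, so the event $\{\liminf_n d^F(\tilde X_n,v)/n<s\}$ is null and hence so is its intersection with the event that $\tilde X$ avoids $\partial H$. Since $H\subset F$ gives $d^H\ge d^F$ and $d^F(\rho,v)=1$, on $\{\sigma=\infty,\,X_1=v\}$ we have $d^H(X_{n+1},\rho)\ge d^F(\tilde X_n,v)-1$, whence $\liminf_m d^H(X_m,\rho)/m\ge s$ there. Summing over the countably many internal vertices $v$ and integrating over the law of $H$ then gives $\liminf_m d^H(X_m,\rho)/m\ge s$ almost surely on $\{\sigma=\infty\}$, which is what is claimed.

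I expect the only real subtlety to be the locality observation and the care needed in its use: checking that while the $H$-walk avoids $\partial H$ it is genuinely a walk on $F$, that the $F$-walk cannot slip out of $H$ except through $\partial H$, and that the conditioning is organized so the almost-sure positive-speed statement on $F$ transfers to $H$ without any control on the probability of $\{\sigma=\infty\}$. No delicate estimates should be required beyond the inputs quoted above.
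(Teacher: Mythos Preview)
Your proposal is correct and follows essentially the same route as the paper: couple $H\subset F$ via \cref{lem:coupling}, observe that the $H$-walk and the $F$-walk agree as long as the walk stays off $\partial H$ (your ``locality observation'' is exactly the justification the paper leaves implicit), and then invoke the positive speed result for $F$ from \cite{PSHIT} together with $d^H\ge d^F$. If anything, your handling of the conditioning on $\{\sigma=\infty\}$ is slightly more careful than the paper's phrasing.
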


\begin{proof}
  Using the coupling in \cref{lem:coupling}, we can couple $H$ as a submap
  of $F$ almost surely.  If $X_1 \in \partial H$ we are done, so assume
  that we are on the event $X_1 \in H \setminus \partial H$.  We couple the
  random walk $X$ on $H$ with a random walk $Y$ on $F$, both starting from
  $X_1$.

  Let $\tau = \inf\{t \ge 1: X_t \in \partial H\}$.
  Since $H \subset F$ and $X_1 \in H \setminus \partial H$, we can couple
  $\{X_n\}$ and $\{Y_n\}$ such that $X_i = Y_i$ for $1\le i \le \tau$. If
  $\tau<\infty$ almost surely, we are done. If $\tau=\infty$, Theorem 3 of
  \cite{PSHIT} ensures that $\lim d^F(Y_n,Y_0)/n =s >0$ for some $s =
  s(\alpha)$ (note that the speed does not depend on the starting vertex of
  the random walk, nor on the vertex from which the distance is measured.)
  Since distances in $H$ are greater than distances in $F$ between the same
  vertices, we conclude $\liminf d^H(X_n,X_0) \geq s$ almost surely on the event
  $\{\tau = \infty\}$.
\end{proof}

Since $H$ almost surely possesses anchored expansion (\cref{cor:Hanch}), we can
decompose $H$ into islands and oceans as described in \cref{sec:anchored}.
For a graph $G$ with anchored expansion constant larger than $i$, recall
the weighted graph $G_i$ constructed in \cref{sec:anchored}. Using
\cref{cor:Hanch,prop:cheeger_weighted}, we conclude:

\begin{prop}\label{prop:cheeger}
  Let $H$ have law $\H_\alpha$, and $i(\alpha)$ be as in
  \cref{lem:i-isolated}. For any $i\le i(\alpha)$, the graph $H_i$ has
  cheeger constant at least $i$ almost surely.
\end{prop}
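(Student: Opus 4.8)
The plan is to read this proposition off directly from the tools recalled in \cref{sec:anchored}, with essentially no new idea required. The first step is to invoke \cref{cor:Hanch}, which tells us that $\H_\alpha$-almost surely the anchored expansion constant of $H$ satisfies $i^*(H)\ge i(\alpha)>0$. From now on I work on this almost sure event and fix $i\le i(\alpha)$. Provided $i<i^*(H)$, the induced-random-walk construction of \cref{sec:anchored} applies verbatim to $G=H$: the decomposition of $H$ into $i$-islands $A_i$ and $i$-oceans is defined, the times $(\tau_j)$ at which the walk lies in an $i$-ocean are well defined, the trace $(X_{\tau_j})$ is a reversible Markov chain on $H\setminus A_i$, and hence it is the random walk on the weighted graph $H_i$ with vertex weights inherited from $H$ and edge weights $w_i(u,v)=w_i(u)\,\P(X_{\tau_1}=v\mid X_0=u)$. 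Now \cref{prop:cheeger_weighted} applies and gives that the Cheeger constant of $H_i$ is at least $i$, which is exactly the claim.

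The one point that deserves a comment is the endpoint $i=i(\alpha)$, for which \cref{cor:Hanch} only supplies $i^*(H)\ge i$, whereas \cref{prop:cheeger_weighted} is stated under the strict inequality $i<i^*(G)$. This is immaterial: every later use of $H_i$ takes $i$ strictly below $i(\alpha)$, so one may simply restrict the statement to $i<i(\alpha)$; equivalently, since the constant $i(\alpha)$ furnished by \cref{lem:i-isolated} may be decreased by an arbitrarily small amount without affecting the validity of that lemma or of \cref{cor:Hanch}, one may assume outright that $i(\alpha)<i^*(H)$ almost surely, after which $i\le i(\alpha)<i^*(H)$ and the argument above goes through unchanged.

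I do not anticipate any genuine obstacle: all the substance lives in \cref{lem:i-isolated} (together with the Borel--Cantelli argument packaged into \cref{cor:Hanch}) and in Vir\'ag's \cref{prop:cheeger_weighted}. The role of this proposition is merely to record, in the precise form needed below, that the ``ocean'' of $H$ at level $i$ carries a random walk whose underlying weighted graph has a genuine (non-anchored) Cheeger constant --- this is what will let us invoke \eqref{eq:exp_return} to control the time the walk spends away from the islands.
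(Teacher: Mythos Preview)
Your proof is correct and follows exactly the paper's approach: the proposition is stated as an immediate consequence of \cref{cor:Hanch} and \cref{prop:cheeger_weighted}, and your write-up simply spells this out, with the added (and fair) remark about the borderline case $i=i(\alpha)$.
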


We now produce an upper bound on the largest island simple random walk on
$H$ typically visits within $n$ steps.

\begin{lemma}\label{lem:largest_island}
  The probability that the random walk visits an $i(\alpha)$-island $I$
  with $|I|\geq m$ before time $n$ is at most $C (n e^{-cm} + e^{-cn})$. In
  particular, almost surely, the largest $i(\alpha)$-island visited
  within $n$ steps has size at most $C\log n$ for large enough $n$.
\end{lemma}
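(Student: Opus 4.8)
The plan is to control two things simultaneously: the size of islands that appear near the root of $H$, and the number of distinct islands the walk can visit by time $n$. For the first, I would use \cref{lem:i-isolated} together with \cref{prop:sinking} (or its idea): every $i(\alpha)$-island is a union of $i(\alpha)$-isolated cores, hence is itself an $i(\alpha)$-isolated set; so \cref{lem:i-isolated} bounds the probability that a \emph{fixed} vertex lies in an island of size $\geq m$ by $\exp(-cm)$. The difficulty is that the walk visits many vertices, not a fixed one, and islands far along the boundary or deep in the bulk are not directly controlled by the root-anchored estimate of \cref{lem:i-isolated}. To bridge this, I would first establish (via a union bound using translation invariance and \cref{prop:dist_along_bdry}, \cref{cor:ball_boundary}, \cref{L:vol}) that with probability $1-\exp(-cn)$ the walk stays within the hull $B_{Cn}$ of radius $Cn$ for a suitable constant $C$, and that this hull contains at most $b^{Cn}$ vertices. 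Actually a cleaner route: by the Carne–Varopoulos bound, or simply because each step moves distance at most $1$, the walk is confined to $B_n$ in $n$ steps, and $|B_n| \leq b^n$ with probability $1 - \exp(-cn)$ by \cref{L:vol}.

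Next I would take a union bound over the at most $b^n$ vertices $v \in B_n$ of the event ``$v$ lies in an $i(\alpha)$-island of size $\geq m$''. Here one must be slightly careful: \cref{lem:i-isolated} as stated anchors the isolated set at the root, but by translation invariance of $\H_\alpha$ the same estimate holds with the root replaced by any fixed boundary vertex, and for an internal vertex $v$ one can absorb the cost of reaching the boundary — alternatively, one peels to reveal a neighbourhood of $v$ and argues that an $i(\alpha)$-isolated core containing $v$ of size $\geq m$ forces an $\exp(-cm)$-probability event (this is essentially the content of \cite[Proposition 4.10]{Ray13}, which is robust to the choice of anchor). Granting this, the union bound gives that the probability some vertex of $B_n$ lies in an $i(\alpha)$-island of size $\geq m$ is at most $b^n \exp(-cm)$. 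Combining with the event $|B_n| \leq b^n$ failing (probability $\exp(-cn)$) and the event that the walk leaves $B_n$ (probability $0$, since it cannot in $n$ steps), we get the bound
\[
  \bP_H\big(\text{walk visits an $i(\alpha)$-island of size}\geq m \text{ before time }n\big) \leq C\big(b^n e^{-cm} + e^{-cn}\big),
\]
which, after renaming constants and restricting to $m$ large enough that $b^n e^{-cm}$ is the $n e^{-cm}$-type term (or simply bounding $b^n \le e^{cn}$ and re-indexing $c$), is exactly the claimed $C(n e^{-cm} + e^{-cn})$ form — in fact with $b^n$ in place of $n$, which is stronger in one direction but one should just match the statement by choosing $m \geq c' n$ when needed; more carefully, the statement's $n$ should presumably be $e^{cn}$ or the intended reading is $m$ comparable to $\log n$, so I would write the bound as $C(e^{cn} e^{-cm} + e^{-cn})$ and note this suffices.

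Finally, for the ``in particular'' clause: take $m = M \log n$ with $M$ large enough that $e^{cn}e^{-cM\log n} = e^{cn} n^{-cM}$ — this is not summable, so instead I would run the union bound over dyadic scales $n \in [2^k, 2^{k+1})$, where within such a block the walk lies in $B_{2^{k+1}}$ and $|B_{2^{k+1}}| \leq b^{2^{k+1}}$, giving probability at most $C(b^{2^{k+1}} e^{-cM\log 2^k} + e^{-c2^k}) = C(b^{2^{k+1}} 2^{-cMk} + e^{-c2^k})$. The main obstacle surfaces here: $b^{2^{k+1}}$ grows doubly-exponentially in $k$ while $2^{-cMk}$ only decays polynomially, so this naive bound is useless. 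The resolution must be that the relevant volume is not $b^n$ but polynomial in $n$: one should use that the walk in $n$ steps visits only $O(\log n)$-many distinct islands (controlled via \cref{lem:island_walk}, which gives $d^G(X_{\tau_m},\rho) \gtrsim m$, hence the number of ocean-times up to the walk reaching distance $r$ is $O(r)$) and that between consecutive ocean visits the walk is inside a single island whose size governs the excursion length. So I would restructure: bound the number of islands touched before time $n$ by the number of ocean-excursions, which is $O(n)$ trivially and $O(d^H(X_n,\rho))$ by \cref{lem:island_walk}; then for each such island, its anchored location is a vertex already visited, and a union bound over $O(n)$ (not $b^n$) visited vertices against \cref{lem:i-isolated} gives probability $\leq C(n e^{-cm} + e^{-cn})$ of one of them being large. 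Then Borel–Cantelli with $m = M\log n$, $M > 1/c$, over $n$ (or dyadic blocks) gives that a.s.\ eventually no visited island exceeds $C\log n$ in size. The key step — and the main thing to get right — is thus the reduction from ``all vertices in $B_n$'' to ``the $O(n)$ vertices actually visited'', which is what makes the union bound affordable; everything else is a routine assembly of \cref{lem:i-isolated}, translation invariance, and Borel–Cantelli.
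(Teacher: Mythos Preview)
Your final plan arrives at the right scale --- a union bound over the $O(n)$ vertices the walk visits rather than over all of $B_n$ --- but it contains a real gap at the point where you invoke \cref{lem:i-isolated}.  That lemma bounds the probability that the \emph{root vertex} lies in a large $i$-isolated set; by translation invariance it extends to any \emph{fixed} boundary vertex.  However, the vertices $X_0,\dots,X_n$ visited by the walk are random and depend on $H$ itself, so you cannot simply write $\P(X_k\text{ is in a large island})\le e^{-cm}$ as if $X_k$ were a deterministic anchor.  The walk could in principle be biased toward regions where large islands sit, and your sentence ``robust to the choice of anchor'' does not address this correlation.  The alternative you sketch (``peel to reveal a neighbourhood of $v$'') has the same circularity: $v$ is already a function of $H$.

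The paper's proof closes exactly this gap by running peeling along the random walk.  One maintains the hull $P_k$ of the walk's trace and observes that, conditionally on $P_{k-1}$ and the current position, the unexplored complement $T_{k-1}$ is a fresh sample of $\H$ (domain Markov property).  Hence each peeling step reveals vertices that are genuinely ``new'' boundary or internal vertices of a fresh half-plane map, and \cref{lem:i-isolated} (together with the exponential tail on the Boltzmann region swallowed in an $(L,i)/(R,i)$ step) applies \emph{conditionally} to bound by $e^{-cm}$ the probability that a large island first appears at that step.  Since the number of peeling steps up to walk-time $n$ is dominated by a sum of $n$ i.i.d.\ geometrics, it is at most $an$ except with probability $e^{-cn}$, and the union bound over these $O(n)$ peeling steps yields the stated $C(ne^{-cm}+e^{-cn})$.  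Your proposal would become correct once you replace ``union bound over visited vertices'' by ``union bound over peeling steps, using domain Markov to refresh the law at each step''.
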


\begin{proof}
  Let $\{X_n\}_{n \ge 1}$ denote the random walk and let $P_n \subset H$ be the sub-map which is the hull of the faces
  incident to $(X_i)_{i\leq n}$. We define the exposed boundary of $P_n$ to
  be the set of vertices it shares with $H \setminus P_n$. Note that if
  $X_n$ is not in the exposed boundary of $P_{n-1}$, then $P_n=P_{n-1}$,
  whereas if $X_n$ is in the exposed boundary of $P_{n-1}$ then $P_n$ is
  constructed by adding to $P_{n-1}$ all neighbours of $X_n$ and any finite
  regions enclosed. This addition involves a geometrically distributed
  number of peeling steps at edges containing $X_n$. This associates to the
  random walk a sequence of peeling steps (see \cref{sec:peeling}). The number of peeling steps used
  to reveal $P_n$ is dominated by a sum of $n$ geometric variables, and so
  for some $a,c>0$, probability that more than $an$ peeling steps occur is
  at most $e^{-cn}$.

  Consider now the event $\cE_k$ that there is some $i$-isolated set $I$ of
  size at least $m$ such $I$ is disjoint of $P_{k-1}$ but not of $P_k$.  We
  argue that $\P(\cE_k|P_{k-1},X_k) \leq e^{-cm}$.  It then follows that the
  probability that $P_n$ intersects any large island is at most $an e^{-cm}
  + e^{-cn}$.

  We split according to the type of peeling step.  If the peeling step is
  of type $\alpha$, then $\cE_k$ can only occur if the $i$-isolated set
  intersects the only vertex in $P_k\setminus P_{k-1}$.  By
  \cref{lem:i-isolated} the probability of this event is exponentially
  small.

  The second possibility is that the peeling step connects an edge to some
  other vertex $v$, and that $v$ is already in the exposed boundary of
  $P_{k-1}$. In that case, the only way for $\cE_k$ to occur is if the
  isolated set $I$ is wholly contained in the Boltzmann triangulation
  surrounded by $P_{k-1}$ and the new face. Since the entire Boltzmann
  triangulation has exponentially decaying size (\cref{lem:1ball}), the probability of his
  event is also at most $e^{-cm}$.

  Finally, it is possible that the peeling step connects to some vertex $v$
  on the boundary of $H$ but not in $P_{k-1}$. In that case, the set $I$
  may be wholly in the Boltzmann triangulation (unlikely, as above) or may
  include $v$.  We split further, according to the distance of $v$ from
  $X_k$. The probability that $v$ is at least $m$ boundary edges of
  $H\setminus P_{k-1}$ away from $X_k$ is at most $e^{-cm}$. For each of
  the $2m$ vertices $v$ at distance at most $m$, the probability that $v$
  is in some large $i$-isolated set $I$ is at most $e^{-cm}$.  Thus the
  probability of connecting to some vertex $v$ which is in some large
  $i$-isolated $I$ is at most $(2m+1)e^{-cm} < e^{-c'm}$.
\end{proof}

\begin{lemma}\label{lem:hitting_bound}
  Let $G$ be a connected graph with $k$ edges, $S$ a non-empty subset and
  $\tau_S$ the hitting time of $S$. Then for any $m\ge 1$ and any vertex $x$
  \[
  \P_x(\tau_S > 4mk^2) \leq 2^{-m}.
  \]
\end{lemma}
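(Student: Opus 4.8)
The plan is to reduce to a standard commute-time / return-time estimate on the finite graph $G$. The key observation is that for a simple random walk on a connected graph with $k$ edges, started from any vertex $x$, the probability of not hitting a fixed nonempty set $S$ in a moderate number of steps can be bounded using a bound on the expected hitting time of $S$, followed by a doubling argument over disjoint time blocks.

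First I would recall the classical bound on expected hitting times: for a reversible chain on a connected graph with $k$ edges (i.e.\ total edge weight $k$ in the unweighted case, so $\sum_v \deg(v) = 2k$), and for any two vertices $x,y$, the expected hitting time $\E_x[\tau_y]$ is at most $2k \cdot d^G(x,y) \le 2k^2$, and more simply $\E_x[\tau_S] \le \E_x[\tau_y] \le 2k^2$ for any $y \in S$ (one can cite the commute-time identity $\E_x\tau_y + \E_y\tau_x = 2k\,R_{\mathrm{eff}}(x,y)$ together with $R_{\mathrm{eff}}(x,y) \le d^G(x,y) \le k$; alternatively use the cruder universal bound that a simple random walk on a graph with $k$ edges has expected cover time $O(k^2)$). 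Either way, $\E_x[\tau_S] \le 2k^2$ for every starting vertex $x$.

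Next I would apply Markov's inequality to get $\P_x(\tau_S > 4k^2) \le \tfrac12$, uniformly over the starting vertex $x$. Now I would split the time interval $[0, 4mk^2]$ into $m$ consecutive blocks each of length $4k^2$. On the event $\{\tau_S > 4mk^2\}$, the walk fails to hit $S$ in each of the $m$ blocks; conditioning successively on the position at the start of each block and applying the strong Markov property together with the uniform bound $\P_x(\tau_S > 4k^2) \le \tfrac12$ (valid from whatever vertex the walk currently occupies), I would conclude $\P_x(\tau_S > 4mk^2) \le 2^{-m}$. This is the desired inequality.

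I do not expect a genuine obstacle here; the only point requiring a little care is choosing the cleanest route to the bound $\E_x[\tau_S] \le 2k^2$ — one wants it to hold from \emph{every} starting vertex with the \emph{same} constant, so that the block-by-block argument goes through uniformly. The commute-time identity combined with $R_{\mathrm{eff}} \le \mathrm{diam} \le k$ gives exactly this, so I would use that; then the doubling over blocks is entirely routine.
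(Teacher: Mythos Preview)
Your proposal is correct and follows essentially the same route as the paper's proof: both invoke the commute-time identity to obtain the uniform bound $\E_x[\tau_S]\le 2k^2$, apply Markov's inequality to get $\P_x(\tau_S>4k^2)\le 1/2$, and then iterate via the Markov property over $m$ blocks of length $4k^2$. Your write-up is slightly more explicit about why the bound is uniform in the starting vertex (via $R_{\mathrm{eff}}\le k$), but the argument is the same.
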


\begin{proof}
  For $m=0$ the result is trivial. Using the commute time identity
  (\cite[Proposition 10.16]{MCMT}), for any vertex $y \notin S$, the
  expected hitting time of $S$ from $y$ is at most $2k^2$. By Markov's
  inequality the probability that $\tau_S>4k^2$ is at most $1/2$.  The
  result now follows by using induction on $m$ and the Markov property.
\end{proof}

\begin{prop}\label{lem:weak_speed}
  Almost surely,
  \[
  \liminf_{n \to \infty} \frac{d^{H}(X_n,\rho)\log^3n}{n} > 0.
  \]
\end{prop}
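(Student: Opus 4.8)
The plan is to combine the positive speed along the sequence of ``ocean times'' provided by \cref{lem:island_walk} with a bound showing that the walk does not waste too much time inside islands. I would fix $i=i(\alpha)$ as in \cref{lem:i-isolated} (shrinking it slightly if necessary so that $i<i^*(H)$) and decompose $H$ into $i$-islands and $i$-oceans as in \cref{sec:anchored}. By \cref{cor:Hanch} the map $H$ a.s.\ has anchored expansion, and by \cref{L:vol} together with Borel--Cantelli it a.s.\ has upper exponential growth; hence \cref{lem:island_walk} applies and furnishes a constant $\delta>0$ with $d^H(X_{\tau_m},\rho)\ge\delta m$ for all large $m$, a.s., where $(\tau_m)$ lists the times at which $X$ is in an ocean. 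Since islands are finite while $H$ is infinite and connected, the walk visits the oceans infinitely often, so $m_n:=\#\{m:\tau_m\le n\}\to\infty$ a.s.

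The core of the argument will be to show that a.s.\ $n\le C\,m_n(\log n)^2$ for all large $n$; since $m_n\le n$ this is exactly what is needed. I would split the first $n$ steps into the $m_n$ ocean steps together with the excursions into islands that may follow some of them. Two distinct islands are never adjacent (otherwise they would lie in a single connected component of $A_i$), so an excursion that enters an island $I$ remains in $I$, and the walk on $H$ during that excursion has the law of a simple random walk on the finite graph $G_I$ built from $I$, its ocean-neighbours $N(I)$, and all edges incident to $I$, run until it hits $N(I)$; the length of the excursion is precisely this hitting time. By \cref{lem:largest_island} every island visited before time $n$ has size at most $C\log n$, so $G_I$ has at most $C\log n$ edges, and \cref{lem:hitting_bound} then shows that, conditionally on the past and on which island is entered, the excursion length is stochastically dominated by $4(C\log n)^2\,\Gamma$ with $\P(\Gamma\ge m)\le 2^{-m+1}$.

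Summing over the at most $m_n$ excursions that have started by time $n$, the total time spent in islands up to time $n$ is at most $4(C\log n)^2\sum_{j=1}^{m_n}\Gamma_j$, where the $\Gamma_j$ have uniformly bounded conditional exponential moments. A Chernoff bound gives $\P\big(\sum_{j=1}^{M}\Gamma_j>aM\big)\le r^M$ for a suitable $a<\infty$ and $r<1$, so Borel--Cantelli yields that a.s.\ $\sum_{j=1}^{M}\Gamma_j\le aM$ for all large $M$; applied with $M=m_n$ (legitimate since $m_n\to\infty$) this bounds the island time by $C'm_n(\log n)^2$, hence $n\le m_n+C'm_n(\log n)^2\le C''m_n(\log n)^2$ and $m_n\ge n/(C''(\log n)^2)$. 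Finally, $X_n$ lies within the current island, so $d^H(X_n,\rho)\ge d^H(X_{\tau_{m_n}},\rho)-C\log n\ge\delta m_n-C\log n\ge c\,n/(\log n)^2$ for all large $n$; in particular $\liminf_n d^H(X_n,\rho)\log^3 n/n\ge c>0$.

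The step I expect to be the main obstacle is the bookkeeping in the last two paragraphs. The bound on island sizes coming from \cref{lem:largest_island} is stated in terms of the elapsed time $n$, which is itself the quantity we are trying to control, so the (mild) circularity in that dependence has to be untangled carefully; and the estimate on the sum of excursion lengths must be made to hold simultaneously for all large $n$ via Borel--Cantelli rather than merely in expectation. All the geometric inputs --- that distinct islands are non-adjacent, that a visited island has logarithmically small weight, and that a simple random walk on a finite graph with $k$ edges hits a nonempty set in time $O(k^2)$ --- are already available from the lemmas above, so the difficulty is entirely in assembling them.
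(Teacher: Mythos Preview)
Your proposal is correct and follows essentially the same route as the paper: fix $i=i(\alpha)$, use \cref{lem:island_walk} for linear growth along the ocean times, bound visited island sizes by $C\log n$ via \cref{lem:largest_island}, and control excursion lengths through \cref{lem:hitting_bound}. The only difference is bookkeeping: the paper takes $m=C'\log n$ in \cref{lem:hitting_bound} to bound each gap $\tau_{k+1}-\tau_k$ by $C^2\log^3 n$ via a union bound and Borel--Cantelli, whereas you write each excursion as $4(C\log n)^2\Gamma_j$ and apply a law of large numbers to the sub-geometric $\Gamma_j$'s --- this actually yields the sharper estimate $d^H(X_n,\rho)\ge c\,n/\log^2 n$, which of course still implies the stated $\log^3 n$ bound.
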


\begin{proof}
  Throughout this proof, fix $i=i(\alpha)$ defined in
  \cref{lem:i-isolated}, and consider the decomposition of $H$ into
  $i$-islands and $i$-oceans and the weighted graph $H_i$ from
  \cref{sec:anchored}. Let $(\tau_k)$ be the sequence of times
  when $X_t$ is in an $i$-ocean.  Using \cref{L:vol,lem:island_walk}, we
  conclude
  \begin{equation}
    \liminf_k \frac{d^{H}(X_{\tau_k},\rho)}{k} > 0. \label{eq:weak_speed3}
  \end{equation}
  
  Let $I_n$ be the size of the largest $i$-island visited by the random
  walker within $n$ steps.  Let $A_1$ be the event that for large enough
  $n$ we have $I_n<C\log n$.  For some $C>0$, \cref{lem:largest_island}
  ensures that a.s.\ $A_1$ holds, and we restrict to $A_1$ from here on.
  \cref{lem:hitting_bound} with $m=C'\log n$ shows that on $A_1$, for any
  $k<n$ we have $\tau_{k+1} > \tau_k + C^2 \log^3 n$ with probability at
  most $n^{-3}$ for large enough $C$ and $n$.  Let $A_2$ be the event that for
  large enough $n$, for all $k<n$ we have $\tau_{k+1} \leq \tau_k + C^2
  \log^3 n$.  By Borel-Cantelli, on $A_2$ holds a.s.\ on $A_1$.  On $A_2$
  we have $\tau_n\leq C^2 n \log^3 n$ for large enough $n$, and so $\log
  \tau_n\sim\log n$ and
  \[
  \liminf_n \frac{d^{H}(X_{\tau_n},\rho) \log^3 n}{\tau_n} > 0.
  \]

  Furthermore, on $A_1$ for $t\in[\tau_k,\tau_{k+1}]$ we have
  $d(X_t,X_{\tau_k}) \leq C\log n$ for large enough $n$, which allows us to
  interpolate and the claim follows.
\end{proof}

Now we recall a result due to Carne and Varopoulos.

\begin{thm}(\cite{C85,V85})\label{thm:CV}
  Let $X_n$ be a simple random walk on a graph $G$ with spectral radius
  $\rho$. For any two vertices $x,y$ in $G$ 
  \[
  \P_x(X_n=y) \leq 2 \rho^n \sqrt{\frac{\deg(y)}{\deg(x)}}
  \exp\left(- \frac{d^G(x,y)^2}{2n} \right).
  \]
\end{thm}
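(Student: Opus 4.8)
Since this is the classical bound of Carne and Varopoulos, the plan is to reproduce Carne's argument via Chebyshev polynomials. First I would set up the operator framework: let $P$ be the transition kernel of the walk, regarded as a bounded self-adjoint operator on $\ell^2(V(G),\deg)$ (self-adjointness being exactly reversibility of simple random walk), so that $\|P\| = \rho$ and $P/\rho$ has spectrum in $[-1,1]$. Let $T_k$ denote the Chebyshev polynomials, $T_k(\cos\theta)=\cos(k\theta)$, which have degree $k$ and satisfy $\sup_{[-1,1]}|T_k|=1$. The engine of the proof is the polynomial identity
\[
z^n = 2^{-n}\sum_{j=0}^n\binom{n}{j}T_{|n-2j|}(z) = \E\big[T_{|S_n|}(z)\big],
\]
obtained by expanding $\cos^n\theta = 2^{-n}(e^{i\theta}+e^{-i\theta})^n$ and taking real parts, where $S_n$ is simple random walk on $\Z$ started from $0$ (note that $n-2j$ with $j\sim\mathrm{Bin}(n,1/2)$ has the law of $S_n$).

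Next I would substitute the operator $P/\rho$ for $z$ in this (finite) identity, giving $P^n = \rho^n\,\E[T_{|S_n|}(P/\rho)]$. The crucial geometric point is finite propagation speed: $T_k(P/\rho)$ is a polynomial of degree $k$ in $P$, and $P^\ell(x,y)=0$ whenever $\ell < d^G(x,y)$, so the matrix entry $T_k(P/\rho)(x,y)$ vanishes unless $k\ge d := d^G(x,y)$. Hence
\[
\P_x(X_n=y) = P^n(x,y) = \rho^n\,\E\big[T_{|S_n|}(P/\rho)(x,y)\,\mathbbm{1}_{|S_n|\ge d}\big].
\]
To control each entry, I would use the relation $\langle\mathbbm{1}_x,P^n\mathbbm{1}_y\rangle = \deg(x)\,P^n(x,y)$ together with Cauchy--Schwarz and $\|T_k(P/\rho)\|\le\sup_{[-1,1]}|T_k|=1$, which yields $|T_k(P/\rho)(x,y)|\le\sqrt{\deg(y)/\deg(x)}$ for every $k$. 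Substituting and bounding the indicator by the one-dimensional Gaussian tail $\P(|S_n|\ge d)\le 2e^{-d^2/(2n)}$ (Hoeffding, or reflection plus Chernoff) produces exactly the claimed inequality.

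The only point requiring genuine care is the bookkeeping in the operator framework: one must work in the $\deg$-weighted $\ell^2$ space so that $P$ is self-adjoint with operator norm equal to the spectral radius $\rho$, and then track the asymmetric factor $\sqrt{\deg(y)/\deg(x)}$ that appears when converting between transition probabilities and inner products. Everything else --- the Chebyshev identity, the finite propagation speed of $P^\ell$, and the Hoeffding estimate for the walk on $\Z$ --- is routine, so I do not expect any serious obstacle beyond getting these normalizations straight.
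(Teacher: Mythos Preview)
Your argument is correct: this is precisely Carne's Chebyshev-polynomial proof, with all the key ingredients (the identity $z^n=\E T_{|S_n|}(z)$, finite propagation speed of polynomials in $P$, the operator-norm bound $\|T_k(P/\rho)\|\le1$, and Hoeffding for the walk on $\Z$) correctly assembled and the $\sqrt{\deg(y)/\deg(x)}$ factor tracked properly.

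There is nothing to compare against, however: the paper does not give its own proof of this statement. Theorem~\ref{thm:CV} is quoted as a black box from the original references \cite{C85,V85} and then applied in the proofs of \cref{lem:finite_hit_bdry} and \cref{T:speed}. So your write-up supplies a proof where the paper simply cites one; what you have written is essentially the argument from \cite{C85}.
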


\begin{lemma}\label{lem:finite_hit_bdry}
  Almost surely, the random walk on $H$ visits $\partial H$ only finitely often.
\end{lemma}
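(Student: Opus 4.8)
The plan is to prove the quenched statement: for $\H$-almost every $H$, the walk visits $\partial H$ only finitely often $\bP_H$-almost surely; by Fubini this is equivalent to the assertion. The argument is a first Borel--Cantelli estimate fed by three inputs: the weak speed bound of \cref{lem:weak_speed}, the Carne--Varopoulos inequality \cref{thm:CV}, and the linear lower bound on graph distances along the boundary from \cref{prop:dist_along_bdry}. By \cref{lem:weak_speed} and Fubini, for a.e.\ $H$ there is $\bP_H$-almost surely a finite random time $N_1$ and a constant $c_1 = c_1(\alpha) > 0$ such that $d^H(X_n,\rho) \ge c_1 n / \log^3 n$ for all $n \ge N_1$. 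Hence on the event that $X_n \in \partial H$ for infinitely many $n$ there are infinitely many $n$ with $X_n \in \partial H$ and $d^H(X_n,\rho) \ge c_1 n/\log^3 n$, and it suffices to show that for a.e.\ $H$
\[
\sum_n \bP_H\Big( X_n \in \partial H \ \text{and}\ d^H(X_n,\rho) \ge c_1 n / \log^3 n \Big) < \infty.
\]

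To bound a single term, I would first localize the endpoint: since the walk has length $n$, always $d^H(\rho,X_n)\le n$, and combined with \cref{prop:dist_along_bdry} and Borel--Cantelli (a.s.\ there is $i_0$ with $d^H(v_0,v_i)\ge t|i|$ for all $|i|\ge i_0$) this forces, for $n$ large, $X_n=v_i$ with $|i|\le n/t$ whenever $X_n\in\partial H$. Next I would control the degrees of these $O(n)$ boundary vertices: $\deg v_0$ has an exponential tail (it is dominated by the number of edges in $B_1$, which has an exponential tail by \cref{lem:1ball}; alternatively use \cref{lem:exp_tail}), and by translation invariance so does every $\deg v_i$, so a union bound over $|i|\le n/t$ together with Borel--Cantelli yields, a.s., a constant $C$ and a finite random $N_2$ with $\deg v_i\le C\log n$ for all $n\ge N_2$ and all $|i|\le n/t$. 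Fixing such an $H$ and applying \cref{thm:CV} (the spectral radius is at most $1$) to each term,
\begin{align*}
\bP_H\Big( X_n \in \partial H,\ d^H(X_n,\rho) \ge \tfrac{c_1 n}{\log^3 n}\Big)
&\le \sum_{|i|\le n/t} \bP_H(X_n = v_i)\,\mathbbm{1}\{ d^H(\rho,v_i)\ge \tfrac{c_1 n}{\log^3 n}\} \\
&\le \sum_{|i|\le n/t} \frac{2\sqrt{\deg v_i}}{\sqrt{\deg \rho}}\, \exp\Big(-\frac{d^H(\rho,v_i)^2}{2n}\Big) \\
&\le \frac{2(2n/t+1)\sqrt{C\log n}}{\sqrt{\deg \rho}}\, \exp\Big(-\frac{c_1^2 n}{2\log^6 n}\Big),
\end{align*}
and since $n/\log^6 n$ eventually exceeds any constant multiple of $\log n$ this is $O(n^{-2})$, hence summable, which finishes the proof.

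The step I expect to be the real obstacle is exactly this control of the degrees. Carne--Varopoulos attaches a factor $\sqrt{\deg(X_n)}$ to each candidate endpoint, and since the triangulation has unbounded degrees one cannot afford to estimate $\sum \sqrt{\deg}$ through the ball volumes: the balls grow exponentially (\cref{L:vol}), which would swamp the merely stretched-exponential decay $\exp(-c_1^2 n/2\log^6 n)$ produced by a distance of only $n/\log^3 n$. What saves the argument is using two facts in tandem: \cref{prop:dist_along_bdry} confines the relevant boundary vertices to a window of $O(n)$ indices rather than to an exponentially large set, and the exponential tail of a single boundary vertex's degree makes all $O(n)$ of these degrees simultaneously $O(\log n)$ with probability $1-O(n^{-2})$, so Borel--Cantelli makes this hold for all large $n$. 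A cruder Markov bound on $\sum_{|i|\le n/t}\deg v_i$ would serve just as well.
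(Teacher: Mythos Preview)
Your proof is correct and follows essentially the same route as the paper: use \cref{lem:weak_speed} to push $X_n$ far from $\rho$, apply Carne--Varopoulos to bound $\bP_H(X_n=y)$ for each boundary vertex $y$, and control the relevant boundary window linearly in $n$ to make the union bound summable. The paper streamlines two of your steps: it uses the threshold $n^{2/3}$ in place of $c_1 n/\log^3 n$ (sidestepping the question of whether the constant implicit in \cref{lem:weak_speed} is deterministic in $\alpha$, which you assert but the lemma does not state), and it invokes \cref{cor:ball_boundary} to bound $\sum_{y\in B_n\cap\partial H}\deg(y)\le Cn$ in one stroke --- precisely the ``cruder'' bound you mention in your last sentence.
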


\begin{proof}
  \cref{cor:ball_boundary} implies that for some $C$, $\H$-almost
  surely, $|B_n\cap\partial H| \leq Cn$ for all large enough $n$. Assume
  this holds, so there are at most $Cn$ possible values of $X_n$ that we must
  eliminate.  \cref{lem:weak_speed} implies that a.s.\ for all large enough
  $n$ we have $d^H(X_n,\rho) > n^{2/3}$.  For any $y\in\partial H_n$ with
  $d^H(y,\rho)\in[n^{2/3},n]$ we have from \cref{thm:CV} that $P_H(X_n=y)
  \leq 2 \deg(y) e^{-n^{1/3}/2}$. 
  A union bound over $y\in\partial H$ not too close to $\rho$ along with appeals to \cref{cor:ball_boundary} and the Borel-Cantelli lemma completes the proof.
\end{proof}

\begin{lemma}\label{lem:pos_speed}
  Let $s>0$ be as in \cref{lem:pos_prob}. Almost surely,
  \[
  \liminf_{n\to \infty}\frac{d^{H}(X_n,\rho)}n > s.
  \]
\end{lemma}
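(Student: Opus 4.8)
The plan is to combine Lemma~\ref{lem:pos_prob}, which already gives the dichotomy ``either $\liminf d^H(X_n,\rho)/n \geq s$ or the walk hits $\partial H$,'' with Lemma~\ref{lem:finite_hit_bdry}, which says the walk hits $\partial H$ only finitely often almost surely. The difficulty is that these two statements do not immediately glue: Lemma~\ref{lem:pos_prob} is phrased for a walk started at $\rho$, and once we know the walk returns to $\partial H$ only finitely many times, we want to restart the analysis from the \emph{last} visit to $\partial H$. So the first step is to condition on the last time $T$ that the walk is on $\partial H$ (which is a.s.\ finite by Lemma~\ref{lem:finite_hit_bdry}, though not a stopping time) and apply the reasoning of Lemma~\ref{lem:pos_prob} to the post-$T$ trajectory.

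Concretely, I would argue as follows. By Lemma~\ref{lem:finite_hit_bdry}, a.s.\ there is a (random, finite) last time $T$ with $X_T\in\partial H$, and for all $n>T$ we have $X_n\in H\setminus\partial H$. Now use the coupling of Lemma~\ref{lem:coupling} to realize $H\subset F_\kappa$, and run a walk $Y$ on $F$ coupled to agree with $X$ for all times $n\geq T$ — this coupling is valid precisely because $X$ never again touches $\partial H$ after time $T$, so it never leaves $H$, and inside $H$ the transition probabilities of the $F$-walk and the $H$-walk agree. By Curien's theorem (\cite[Theorem 3]{PSHIT}), $\lim_n d^F(Y_n,Y_0)/n = s'$ for some $s'>0$ that does not depend on the starting point; in fact one may take $s'=s$ as in Lemma~\ref{lem:pos_prob}. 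Since graph distances in $H$ dominate those in $F$, we get $\liminf_n d^H(X_n,X_T)/n \geq s$. Finally, $d^H(X_n,\rho)\geq d^H(X_n,X_T)-d^H(X_T,\rho)$ and $d^H(X_T,\rho)$ is a fixed finite number (for the given realization), so dividing by $n$ and taking $\liminf$ kills the $d^H(X_T,\rho)$ term and yields $\liminf_n d^H(X_n,\rho)/n \geq s$.

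The main obstacle — really the only subtlety — is handling the non-stopping-time nature of $T$: we cannot directly invoke a strong-Markov-type statement at $T$. The clean way around this is to avoid conditioning on $T$ being exactly some value and instead observe that the event $\{\liminf_n d^H(X_n,\rho)/n \geq s\}$ is a tail event of the walk, so it suffices to show that for \emph{some} a.s.-finite time the shifted walk has positive liminf speed. Alternatively, and perhaps most transparently, one can union over the countably many possible values of $T$: for each fixed $m$, on the event $\{T=m\}$ the walk $(X_{m+j})_{j\geq 0}$ is, conditionally on $\mathcal F_m$ and on never returning to $\partial H$, a walk that stays in $H\setminus\partial H$, and Lemma~\ref{lem:pos_prob}'s argument (coupling to $F$ and applying \cite[Theorem 3]{PSHIT}) gives $\liminf_j d^H(X_{m+j},X_m)/j\geq s$ on that event; summing the (probability-one, within each piece) conclusions over $m$ gives the claim a.s. Since $d^H(X_{m+j},\rho)/(m+j)$ and $d^H(X_{m+j},X_m)/j$ have the same $\liminf$, we conclude $\liminf_n d^H(X_n,\rho)/n > s$ — strict because Curien's speed is strictly positive and the inequality $d^H \geq d^F$ loses nothing asymptotically relevant, or simply because we can absorb a strict gap by choosing $s$ slightly smaller at the outset. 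This completes the proof.
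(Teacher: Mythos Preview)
Your proposal is correct and follows essentially the same route as the paper: decompose according to the last time and place $(k,v)$ the walk is on $\partial H$ (the paper calls these events $\cE_{v,k}$), use \cref{lem:finite_hit_bdry} to see that these events cover a full-measure set, and on each piece apply the ordinary Markov property at time $k$ together with the dichotomy of \cref{lem:pos_prob} to conclude $\liminf d^H(X_n,\rho)/n \geq s$. The only cosmetic difference is that the paper invokes translation invariance of $\H$ to apply \cref{lem:pos_prob} verbatim from the boundary vertex $v$, whereas you re-run the coupling argument with $F_\kappa$ from $v$; note also that your justification for the \emph{strict} inequality is not quite sound (choosing $s$ slightly smaller yields only $\geq s$), but the paper's own proof likewise only establishes $\geq s$, which is all that is used downstream.
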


\begin{proof}
  For $v\in\partial H$, let $\cE_{v,k}$ be the event that $X_k=v$ and the
  walk never visits the boundary thereafter.  \cref{lem:finite_hit_bdry}
  ensures that $P_H\left( \cup_{v,k} \cE_{v,k} \right) = 1$.  From
  \cref{lem:pos_prob}, the Markov property of random walk on $H$ and
  translation invariance of $H$ we deduce that on each $\cE_{v,k}$, almost
  surely $\liminf d(X_n,\rho)/n \geq s$.
\end{proof}

Now we turn to prove \cref{T:speed}. Let $B(\partial H,r)$ denote the hull of the ball of radius $r$ around $\partial H$.
\begin{lemma} \label{lem:vol_strip} For all $\eps>0$, there exists a
  $\delta>0$ depending only upon $\eps,\alpha$ such that almost surely for
  all large enough $n$
  \[
  |B_n \cap B(\partial H,\delta n)| < \exp(\eps n).
  \]
\end{lemma}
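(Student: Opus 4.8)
The plan is to combine two facts we already have: first, that the boundary segment swallowed by $B_n$ is linear in $n$ (Corollary~\ref{cor:ball_boundary}), so $|B_n \cap \partial H| < t'n$ except with probability $e^{-cn}$; and second, that for each fixed vertex $v$ on the boundary, the hull of the ball of a fixed radius $R$ around $v$ has an exponential tail on its volume (Lemma~\ref{lem:hull1}, or rather its local version). The region $B_n \cap B(\partial H, \delta n)$ is contained in the union, over boundary vertices $v \in B_n \cap \partial H$, of the hulls of balls of radius $\delta n$ around $v$ in $H$. So I would estimate the volume of one such hull and then take a union bound over the $\le t'n$ relevant boundary vertices.

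Concretely, first I would fix $\eps > 0$. By Corollary~\ref{cor:ball_boundary}, a.s.\ for large $n$ there are at most $t'n$ boundary vertices in $B_n$. For each such vertex $v$, the hull of the ball of radius $\delta n$ around $v$ has volume controlled as follows: repeating the peeling-to-reveal-hulls argument of Lemma~\ref{L:vol} started from a segment of the boundary around $v$ (or iterating Lemma~\ref{lem:1ball}/Lemma~\ref{lem:hull1} $\delta n$ times), one gets $\E|B_{\delta n}(v)| \le b^{\delta n}$ for a constant $b = b(\alpha) > 1$, and in fact $\H\big(|B_{\delta n}(v)| > b^{\delta n} \cdot e^{\eps n / 2}\big)$ is small. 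The key point is that $b^{\delta n}$ can be made smaller than $e^{\eps n / 3}$ by choosing $\delta = \delta(\eps,\alpha)$ small enough, namely $\delta < \eps / (3 \log b)$. Then $|B_n \cap B(\partial H, \delta n)| \le \sum_{v} |B_{\delta n}(v)| \le t' n \cdot b^{\delta n} \cdot (\text{deviation factor})$, which is $< e^{\eps n}$ for large $n$ once the deviation event is excluded.

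The one subtlety — and the main obstacle — is getting a bound on the volume of $B_{\delta n}(v)$ that holds simultaneously for all $\le t'n$ boundary vertices $v$ in $B_n$, since these balls are highly correlated and $v$ is not fixed in advance. The cleanest route is to prove a uniform statement: the probability that there exists \emph{some} boundary vertex $v$ within distance $n$ of the root whose radius-$\delta n$ hull has volume exceeding $b^{\delta n}$ is at most $n \cdot e^{-c \delta n}$, by a union bound over the (at most $2n$) candidate vertices combined with the single-vertex exponential tail from the peeling argument, using translation invariance to handle each $v$; Borel–Cantelli then removes these events a.s. Once this uniform control is in place, summing over the at most $t'n$ boundary vertices actually in $B_n$ gives $|B_n \cap B(\partial H,\delta n)| \le t'n \cdot b^{\delta n} < e^{\eps n}$ for large $n$, as required. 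I would note that the interplay of the two exponential rates requires $\delta$ to be chosen after $\eps$ but depending only on $\eps$ and $\alpha$, which is exactly what the statement allows.
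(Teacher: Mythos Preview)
Your approach is correct and is essentially the same as the paper's, which records the proof in a single line: ``Follows from \cref{L:vol,prop:dist_along_bdry} and translation invariance.'' You have simply unpacked this: use \cref{prop:dist_along_bdry} (via \cref{cor:ball_boundary}) to bound the number of relevant boundary vertices by $O(n)$, use \cref{L:vol} and translation invariance to bound each $|B_{\delta n}(v)|$ by $b^{\delta n}$ except on an event of probability $e^{-c\delta n}$, take a union bound and apply Borel--Cantelli, then choose $\delta$ small so that $n\cdot b^{\delta n}<e^{\eps n}$. One minor point: the boundary vertex $w$ nearest to a point $x\in B_n\cap B(\partial H,\delta n)$ lies in $B_{(1+\delta)n}\cap\partial H$, not necessarily $B_n\cap\partial H$, but this only changes constants.
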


\begin{proof}
  Follows from \cref{L:vol,prop:dist_along_bdry} and translation invariance.
\end{proof}

\begin{proof}[Proof of \cref{T:speed}]
  Fix $\eps=s^2/9$, where $s$ is as in \cref{lem:pos_prob}.  Choose
  $\delta$ such that \cref{lem:vol_strip} is satisfied, and let $A_n = B_n
  \cap B(\partial H,\delta n)$. Now consider the event
  \[
  \cE_n =\left \{|A_n| \leq \exp(\eps n)\right\}
  \cap \left\{d^H(X_n,\rho) \geq s n/2 \right \}.
  \]
  Notice $\cE_n$ occurs almost surely for all large enough $n$ (from
  \cref{lem:vol_strip,lem:pos_speed}).  Now using \cref{thm:CV}, we obtain 
  \begin{align*}
    P_H(X_n \in A_n, \cE_n)
    &\leq \sum_{\stackrel{y \in A_n}{d^H(y,\rho) \geq sn/2}}
    2\deg(y) \exp(-s^2 n/8)\\
    &= 2|A_n| e^{-s^2 n/8} \leq 2 e^{-s^2 n/72}.
  \end{align*}
  The Borel-Cantelli lemma implies that the events $\{X_n \in A_n\}$ occur
  finitely often almost surely. This completes the proof of the Theorem.
\end{proof}


\section{Return probabilities}\label{sec:4}
In this section we prove \cref{T:return_prob}.

\subsection{Upper bound}

We first get an annealed version of the upper bound on the return
probability.

\begin{lemma}\label{lem:annealed}
  There exists a constant $c>0$ such that
  \[
  \P(X_n = \rho) \le e^{-cn^{1/3}}.
  \]
\end{lemma}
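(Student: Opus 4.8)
The plan is to carry out Vir\'ag's island/ocean argument in the annealed setting. Fix $i=i(\alpha)$ and decompose $H$ into $i$-islands and $i$-oceans; by \cref{prop:cheeger} the induced walk $Y_m=X_{\tau_m}$ on the weighted graph $H_i$ has Cheeger constant at least $i$, so \eqref{eq:exp_return} gives $\bP_H^v(Y_m=v)\le\gamma^m$ with $\gamma=1-i^2/2<1$, uniformly over ocean vertices $v$ and over $m$. Let $m(n)=\#\{\,1\le j\le n:\ X_j\text{ lies in an }i\text{-ocean}\,\}$; the remaining $n-m(n)$ steps are spent on at most $m(n)+1$ excursions inside islands. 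I would bound $\bP_H(X_n=\rho)$ by splitting according to whether $m(n)\ge\eps n^{1/3}$ or $m(n)<\eps n^{1/3}$, for a small constant $\eps>0$ fixed at the end; the exponent $1/3$ emerges from balancing the maximal island size $D$ against the number $\asymp n/D^2$ of ocean steps it forces, i.e.\ $D\asymp n^{1/3}$.

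On $\{m(n)\ge\eps n^{1/3}\}$: if $\rho$ lies in an ocean then $\{X_n=\rho,\,m(n)=m\}\subseteq\{Y_m=\rho\}$, so summing $\gamma^m$ over $m\ge\eps n^{1/3}$ gives the quenched bound $Ce^{-cn^{1/3}}$; if $\rho$ lies in an island $I_0$ then returning to $\rho$ at time $n$ forces the last ocean vertex before time $n$ onto the shore of $I_0$, and a Cauchy--Schwarz estimate bounds $\bP_H(X_n=\rho,\,m(n)=m)$ by $\gamma^m$ times a factor depending only on $I_0$ that has finite annealed expectation by \cref{lem:i-isolated}. Either way, after averaging over $H$ and summing, this range contributes at most $e^{-cn^{1/3}}$ to $\P(X_n=\rho)$.

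On $\{m(n)<\eps n^{1/3}\}$ the island excursions account for at least $n-\eps n^{1/3}\ge n/2$ steps, which I claim is unlikely. By \cref{lem:largest_island} (applied with island-size threshold $Cn^{1/3}$), outside an event of annealed probability $\le C(ne^{-cn^{1/3}}+e^{-cn})\le e^{-c'n^{1/3}}$ every $i$-island met by the walk within $n$ steps has weight $O(n^{1/3})$, hence $O(n^{1/3})$ incident edges (an $i$-isolated set has edge count and edge boundary comparable to its weight), so each excursion runs inside a finite graph with $O(n^{1/3})$ edges. Exploring the map and the walk incrementally, \cref{lem:hitting_bound} then shows the successive excursion lengths are stochastically dominated by i.i.d.\ copies of a variable $\hat\xi$ with $\P(\hat\xi>t)\le 2^{-ct/n^{2/3}}$, so $\E\hat\xi=O(n^{2/3})$ and $\E e^{\lambda\hat\xi}\le 2$ at $\lambda=c/n^{2/3}$. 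The total excursion length is thus dominated by a sum of at most $\eps n^{1/3}+1$ copies of $\hat\xi$, of mean $O(\eps n)$; choosing $\eps$ small and applying the Chernoff bound at this $\lambda$ gives $\P(\sum_j\hat\xi_j\ge n/2)\le e^{-cn^{1/3}}$. Combining the two ranges with the cost of \cref{lem:largest_island} proves the lemma.

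The main obstacle is making all of these estimates cooperate at the scale $D\asymp n^{1/3}$: the tail in \cref{lem:largest_island}, the excursion control from \cref{lem:hitting_bound}, and the ocean-return bound must be stitched together there, and no single ingredient gives the exponent $1/3$ on its own. A secondary point is that the argument mixes an annealed input (\cref{lem:largest_island}, which controls the map and walk jointly) with a quenched spectral bound for $Y$, and one must justify the claimed i.i.d.\ stochastic domination of the successive excursion lengths once the map and the walk are explored step by step, as well as handle the (minor) possibility that $\rho$ itself lies in an island.
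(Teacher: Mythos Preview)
Your approach is essentially correct, but it is genuinely different from the paper's, and the comparison is instructive.

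You fix the $i$-island/ocean decomposition once and for all, then split on the number $m(n)$ of ocean visits: many ocean visits give a spectral bound $\gamma^{m(n)}$ for the induced walk on $H_i$, while few ocean visits force long island excursions, which you control via \cref{lem:largest_island} and \cref{lem:hitting_bound} together with a Chernoff bound. The exponent $1/3$ emerges from balancing the island-size threshold $D$ against the $n/D^2$ excursion count.

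The paper instead uses the \emph{sinking} property (\cref{prop:sinking}) to change the decomposition with $n$: take $\eps=n^{-1/3}\le i$ and note that every $i$-island of size at most $1/\eps=n^{1/3}$ lies entirely in the $\eps$-ocean. Thus on the event $B_n^c$ that no $i$-island of size $>n^{1/3}$ is visited, the walk on $H$ never enters an $\eps$-island and therefore coincides with the walk on $H_\eps$. Since $H_\eps$ has Cheeger constant $\ge\eps$ by \cref{prop:cheeger_weighted}, one line of Cheeger's inequality gives $\bP_{H_\eps}(X_n=\rho)\le(1-\eps^2/2)^n\le e^{-n^{1/3}/2}$, and $\P(B_n)\le e^{-cn^{1/3}}$ by \cref{lem:largest_island}. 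Done.

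What the paper's route buys is that it completely bypasses the excursion-length analysis, the i.i.d.\ stochastic-domination coupling, and the case distinction on whether $\rho$ lies in an island: all of these simply disappear because on $B_n^c$ the two walks are identical, and the spectral bound on $H_\eps$ applies to every vertex. Your approach, by contrast, is more hands-on and does not invoke \cref{prop:sinking}; it would generalize to settings where one has control on island sizes and on hitting times but lacks a monotone family of decompositions. The price is the extra bookkeeping you already flagged: the Cauchy--Schwarz patch when $\rho$ sits in an island is workable but not entirely trivial (the factor involves degrees of shore vertices, not just $|I_0|$), and the domination argument for excursion lengths on $B_n^c$ needs the conditional formulation you sketch.
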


\begin{proof}
  Let $i=i(\alpha)$ be as in \cref{lem:i-isolated}, so that large
  $i$-islands are exponentially uncommon. We prove the claim for $n$ such that
  $n^{-1/3}\le i$.  By changing $c$ it holds for smaller $n$ as
  well. Let $\eps = n^{-1/3}$, and consider the weighted graph $H_\eps$
  constructed in \cref{sec:anchored}. Note that there is a natural
  coupling of the random walk on $H$ and on $H_\eps$ so that the two agree
  until the first time that the random walk on $H$ visits an $\eps$-island.

  Let $B_n$ be the event that the simple random walk visits an $i$-island
  of size at least $n^{1/3}$ within $n$
  steps.
  \begin{equation}
    \P\big(X_n = \rho\big) \le \P\big(\{X_n = \rho\} \cap B_n^c\big) +
    \P\big(B_n\big).
    \label{eq:break}
  \end{equation}
  By \cref{lem:largest_island}, we conclude
  \begin{equation}
    \P(B_n) \le Cn \exp(-cn^{1/3})  \label{eq:annealed1}
  \end{equation}
  for some $c,C>0$.

  \cref{prop:sinking} implies that all the $i$-islands of size at most
  $n^{1/3}$ are in $H_\eps$. Thus on the event $B_n^c$, the random walks on
  $H$ and $H_\eps$ coincide at least up to time $n$. Hence
  \begin{equation}
    \label{eq:Prrbound}
  \bP_H(\{X_n=\rho\} \cap B_n^c) \leq \bP_{H_\eps}(X_n=\rho).
  \end{equation}
  By \cref{prop:cheeger_weighted}, $H_\eps$ has $\eps$-expansion, and so by
  Cheeger's inequality, the spectral radius of the random walk operator on
  $H_\eps$ is at most $1 - \eps^2/2$. It follows that
  \[
  \bP_{H_\eps}(X_n=\rho) \leq \left(1 - \frac{\eps^2}{2} \right)^n \leq
  \exp(-n^{1/3}/2),
  \]
  hence the lemma follows by combining this with
  \cref{eq:break,eq:Prrbound,eq:annealed1}. 
\end{proof}

\begin{proof}[Proof of \cref{T:return_prob} upper bound]
  This follows from \cref{lem:annealed} together with Markov's inequality
  and the Borel-Cantelli lemma.
\end{proof}

\subsection{Lower bound: existence of traps}

To prove the lower bound, we show that the simple random walk spends much
time in certain traps with not too small probability.  The argument then
consists of three parts. First, once a trap is reached, there is some
probability of staying inside it much of the time. Second, sufficiently
large traps exist reasonably close to the root. Finally, the probability of
reaching the trap, and returning from it to the root are not too small. 

The traps we shall consider resemble long paths. Let us start with a lemma
about the simple random walk on $\Z$ which shows that with exponential
cost, the walker can stay within the interval $(0,n)$ for $n^3$ steps.

\begin{lemma}\label{SRWestimate}
  Consider a random walk $\{X_i\}_{i\ge0}$ on $\Z$ with steps uniform in
  $\{-1,0,1\}$, starting from $1$. For all $t \ge n^3/2$, there exists
  $c>0$ such that for all large enough $n$,
  \[
  \P(X_t=1, X_1,\dots, X_t \in [1,n] ) \ge e^{-ct/n^2}.
  \]
\end{lemma}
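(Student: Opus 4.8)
The plan is to recognize the left-hand side as a single matrix entry of a sub-stochastic operator and to estimate it via the explicit spectral decomposition of the walk on a path. Let $Q$ be the $n\times n$ matrix with $Q_{jk}=\tfrac13$ whenever $|j-k|\le 1$ (including $j=k$) and $Q_{jk}=0$ otherwise; this is the transition kernel of the walk killed upon leaving $\{1,\dots,n\}$. Since $X_0=1\in[1,n]$ automatically, summing over admissible paths gives
\[
\P\bigl(X_t=1,\ X_1,\dots,X_t\in[1,n]\bigr)=(Q^t)_{1,1}.
\]
Now $Q=\tfrac13 I+\tfrac13 A$ with $A$ the adjacency matrix of the path on $n$ vertices, so $Q$ is symmetric with orthonormal eigenvectors $v_k(j)=\sqrt{2/(n+1)}\,\sin\!\bigl(jk\pi/(n+1)\bigr)$ and eigenvalues $\lambda_k=\tfrac13\bigl(1+2\cos(k\pi/(n+1))\bigr)$, $k=1,\dots,n$, whence $(Q^t)_{1,1}=\sum_{k=1}^n\lambda_k^t\,v_k(1)^2$.

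First I would bound the leading term from below: from $1-\cos x\le x^2/2$ we get $\lambda_1\ge 1-\pi^2/(3n^2)>0$, so $\lambda_1^t\ge e^{-ct/n^2}$ for a suitable constant $c$ and all large $n$; from $\sin x\ge 2x/\pi$ on $[0,\pi/2]$ we get $v_1(1)^2\ge 8/(n+1)^3\ge n^{-3}$, so $\lambda_1^t v_1(1)^2\ge n^{-3}e^{-ct/n^2}$. Next I would show the other terms are negligible. Using $\sum_k v_k(1)^2=(I)_{1,1}=1$ we have $\bigl|\sum_{k\ge 2}\lambda_k^t v_k(1)^2\bigr|\le\max_{k\ge2}|\lambda_k|^t$. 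Since $\lambda_k$ is decreasing in $k$ with $\lambda_n\to-\tfrac13$, for large $n$ the maximum is attained at $k=2$, and a Taylor expansion gives $\lambda_1-\lambda_2=\tfrac23\bigl(\cos\tfrac{\pi}{n+1}-\cos\tfrac{2\pi}{n+1}\bigr)\ge c'/n^2$; hence $\lambda_2\le\lambda_1 e^{-c''/n^2}$ and $\lambda_2^t\le\lambda_1^t e^{-c''t/n^2}$. Because $t\ge n^3/2$ forces $t/n^2\ge n/2\gg\log(2n^3)$, this yields $\lambda_2^t\le\tfrac12 n^{-3}\lambda_1^t\le\tfrac12\lambda_1^t v_1(1)^2$ for all large $n$, so
\[
(Q^t)_{1,1}\ \ge\ \lambda_1^t v_1(1)^2-\lambda_2^t\ \ge\ \tfrac12 n^{-3}e^{-ct/n^2}\ \ge\ e^{-c_0 t/n^2},
\]
the last inequality again absorbing the polynomial prefactor into the exponent using $t/n^2\ge n/2$. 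This proves the lemma with a constant $c_0$ independent of $t$.

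There is no serious obstacle here: because $t$ is as large as $n^3/2$, the sub-dominant eigenvalues contribute a factor that is super-polynomially smaller than the leading one, so the only care required is in the elementary trigonometric estimates on $\lambda_1$, $\lambda_2$ and $v_1(1)$, and in making the constant uniform over $t$. As an alternative, one could phrase the same argument through the Doob $h$-transform with $h(i)=\sin(\pi i/(n+1))$, writing the left-hand side as $\lambda_1^t\,\widehat{\P}_1(\widehat X_t=1)$ for the transformed chain, which is reversible, aperiodic (the laziness of the step distribution is inherited), and mixes in time $O(n^2\log n)\le n^3/2$, so that $\widehat{\P}_1(\widehat X_t=1)\ge\tfrac12\widehat\pi(1)\ge\tfrac12 n^{-3}$; but this routes through the same spectral-gap estimate.
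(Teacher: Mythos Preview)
Your argument is correct. The identification $\P(\cdot)=(Q^t)_{1,1}$, the eigendecomposition of $Q=\tfrac13(I+A)$ on the path, the lower bound on $\lambda_1^t v_1(1)^2$, and the control of the remaining terms via $\max_{k\ge2}|\lambda_k|^t=\lambda_2^t$ (valid once $n$ is large enough that $\lambda_2>1/3\ge|\lambda_n|$) are all sound; the use of $t\ge n^3/2$ to absorb the $n^{-3}$ prefactor and the spectral-gap loss into the exponent is exactly what is needed to get a uniform constant $c_0$.

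This is a genuinely different route from the paper's. The paper argues probabilistically: it forces the walk to reach $\lfloor n/2\rfloor$ before $0$ (cost $2/n$), then iterates $\lfloor (t-n)/n^2\rfloor$ times the event ``stay in $(0,n)$ for $n^2$ steps and end in $[n/4,3n/4]$'' (each with cost bounded away from $0$ by a central-limit/reflection estimate), and finally steers back to $1$ in $n$ steps; the product gives $e^{-ct/n^2}$ once $t\ge n^3/2$. That argument is more elementary and robust --- it never names an eigenvector and would transfer verbatim to any bounded-step walk with a holding probability. Your spectral computation, by contrast, is sharper and cleaner here precisely because the killed lazy walk on the path is exactly solvable; it also makes transparent why the threshold $t\gtrsim n^3$ appears (it is what is needed for $e^{-c''t/n^2}$ to beat the $n^{-3}$ coming from $v_1(1)^2$). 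Either approach suffices for the application in the paper.
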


(The assumption on $t$ can easily be relaxed to $n^2\log n$, which we do
not need.)

\begin{proof}
  The probability that the random walk reaches $\lf n/2\rf$ before reaching
  $0$ is $2/n$. For any $k\in[n/4,3n/4]$, the probability that the random
  walk started at $k$ does not exit $(0,n)$ for $n^2$ steps, and after
  $n^2$ steps is again in $[n/4,3n/4]$ is at least some $c>0$. Using the
  Markov property and iterating this event $\lfloor (t-n)/n^2\rfloor$
  times, we get that the random walker stays in $[1,n]$ for $t-n$ steps and
  ends up in $[n/4,3n/4]$ is at least $\exp(-ct/n^2)$. Finally. the
  probability that the walker reaches $1$ in the next $n$ steps is at least
  $\exp(-cn)$. But since $t>n^3/2$, we have the desired result.
\end{proof}

Let us now define our traps. A trap of order $n$ consists of $n+1$
triangles with disjoint vertices, each inside the previous one (ordered and
numbered $0,\dots,n$), with edges connecting consecutive triangles as shown
in \cref{fig:trap} and no other vertices between triangles or within the
last triangle.  Only vertices of triangle $0$ are connected to the rest of
the map. When the random walk is at a vertex of the $k$th triangle for
$0<k<n$, it moves to a vertex in triangle $k'$ which is equally likely to
be each of $\{k-1,k,k+1\}$.

\begin{figure}
  \centering{\includegraphics[width=.5 \textwidth]{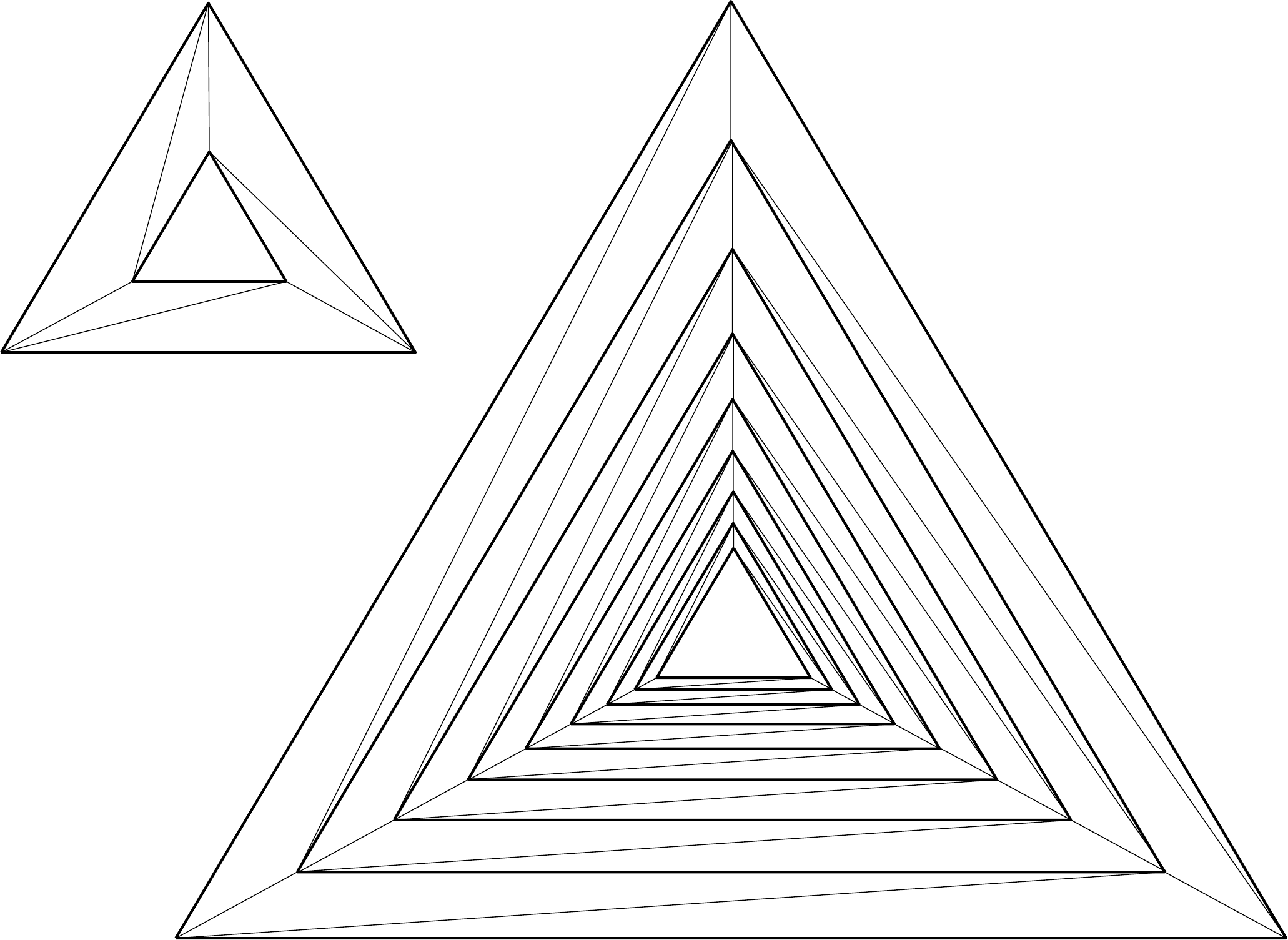}}
  \caption{Traps of order $1$ (left) and $8$ (right).} \label{fig:trap}
\end{figure}

\begin{corollary}\label{cor:trap_delay}
  A simple random walk started from a vertex of triangle $1$ of a trap of
  order $n$ has probability at least $e^{-ct/n^2}$ of being back at
  triangle $1$ at time $t$, for any $t>n^3/2$.
\end{corollary}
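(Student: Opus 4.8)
The statement should follow almost immediately from Lemma \ref{SRWestimate} once we observe that the random walk on a trap of order $n$, projected to the index $k$ of the triangle it currently occupies, is exactly the lazy $\{-1,0,1\}$ walk on $\Z$ considered there. The plan is: first, set up the projection $\pi$ sending a vertex of triangle $k$ to $k\in\{0,1,\dots,n\}$, and check that under the simple random walk on the trap, $(\pi(X_t))$ evolves as claimed. For $0<k<n$ this is built into the definition of the trap (``moves to a vertex in triangle $k'$ equally likely to be each of $\{k-1,k,k+1\}$''), so that the projected chain takes steps uniform in $\{-1,0,1\}$ as long as it stays in the open range $(0,n)$. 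Starting from a vertex of triangle $1$ corresponds to starting the projected walk from $1$.

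Second, I would apply Lemma \ref{SRWestimate} to get that with probability at least $e^{-ct/n^2}$ the projected walk satisfies $\pi(X_t)=1$ and $\pi(X_1),\dots,\pi(X_t)\in[1,n]$ for any $t\ge n^3/2$; in particular the walk never reaches triangle $0$ (so it never interacts with the rest of the map, and the projection remains the clean $\{-1,0,1\}$ walk throughout) and never escapes past triangle $n$. On this event $X_t$ lies in triangle $1$, which is precisely the conclusion ``back at triangle $1$ at time $t$''. That is all that is asserted, so the proof is essentially a one-line reduction.

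The only point requiring a word of care is the behaviour at the endpoints $k=0$ and $k=n$: Lemma \ref{SRWestimate} controls the walk on the interval $[1,n]$ conditioned to stay there, so we need the event in that lemma to guarantee the trap walk never ``feels'' the boundary triangles, which it does since staying in $[1,n]$ and ending at $1$ already precludes hitting $0$, and hitting $n$ is harmless anyway (the walk can reflect off triangle $n$ just as the $\Z$-walk reflects internally at $n$). I do not expect a genuine obstacle here; the substance of the argument is entirely contained in Lemma \ref{SRWestimate}, and the corollary is just its translation into the language of traps.
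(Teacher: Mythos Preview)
Your proposal is correct and matches the paper's intended argument: the corollary is stated immediately after Lemma~\ref{SRWestimate} without proof, so the one-line reduction via the projection $\pi$ to the $\{-1,0,1\}$ walk is exactly what is meant. One small sharpening at the inner boundary: the $\Z$-walk of Lemma~\ref{SRWestimate} is on all of $\Z$, not reflected at $n$, so ``reflects internally at $n$'' is not quite the right picture; the clean justification is that from triangle $n$ the projected trap walk steps to $\{n-1,n\}$ each with probability $1/2\ge 1/3$, hence every $\Z$-path staying in $[1,n]$ has at least as large probability under the projected trap walk and the lower bound transfers (alternatively, just apply the lemma with $n-1$ in place of $n$ and adjust $c$).
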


Now perform peeling to reveal the hulls of the ball of radius $r$ around
the root vertex as described in \cref{sec:peeling}. Consider the event
$\cE_n$ that in this process, a step of the form $(R,2)$ occurs and the
finite triangulation in the area enclosed by the revealed triangle is a
trap of order $n$. The number of steps is exponential in $r$, and the
probability of finding a trap is exponential in $n$. This suggests that
traps can be found with high probability.

\begin{lemma}\label{lem:revealing_trap}
  There exists a positive constant $C$ depending only on $\alpha$ such that
  for all $n\ge 1$
  \[
  \H\left(\cE_n \text{ occurs before revealing $B_{\lfloor Cn
        \rfloor}$}\right) \xrightarrow[n\to\infty]{} 1.
  \]
\end{lemma}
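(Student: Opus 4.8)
The plan is to run the peeling process that reveals hulls and argue that by the time $B_{\lfloor Cn\rfloor}$ is revealed, a huge (exponential in $n$) number of peeling steps have been performed, each of which has a chance $\gtrsim e^{-cn}$ of producing a trap of order $n$, independently of the rest. A Borel--Cantelli / product estimate then forces $\cE_n$ to occur with probability tending to $1$ provided the number of attempts dwarfs $e^{cn}$. First I would compute the probability that a single $(R,2)$ peeling step produces a trap of order $n$. A trap of order $n$ is a specific simply connected triangulation with $n$ faces sitting inside the $2$-gon cut off by the revealed triangle; by Lemma~\ref{lem:prob} (or equivalently the free-triangulation weight with parameter $\alpha\beta$ from \cref{sec:peeling}), its conditional probability given that an $(R,2)$ step occurred is $\alpha^{V}\beta^{F-V}/Z_3(\alpha\beta)$ for the appropriate vertex and face counts $V,F$ of the trap, and one checks $V,F$ are both linear in $n$, so this probability is $e^{-cn}$ for some $c=c(\alpha)>0$. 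Since $(R,2)$ itself has fixed positive probability $p_2>0$ at each peeling step (independently of $P_n$ by the domain Markov property), each peeling step independently yields a trap of order $n$ with probability $p\ge p' e^{-cn}$.

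Next I would lower bound the number of peeling steps performed before $B_{\lfloor Cn\rfloor}$ is completed. By \cref{L:vol}, with probability at least $1-e^{-c'r}$ we have $|B_r|\ge a^r$ for $a>1$; taking $r=\lfloor Cn\rfloor$ gives $|B_{\lfloor Cn\rfloor}|\ge a^{Cn}$ with overwhelming probability. Since each peeling step adds a bounded-in-expectation (indeed exponential-tail, by \cref{lem:exp_tail}) number of vertices, the number $N_{\lfloor Cn\rfloor}$ of peeling steps needed is at least, say, $c'' a^{Cn}$ except on an event of probability $e^{-c'''n}$ — this can be made rigorous exactly as in the proof of the upper bound in \cref{L:vol}, using that $N_r$ is a sum controlled below by $|B_r|$ divided by a sum of i.i.d.\ exponential-tail increments, so a standard large deviation bound applies. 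Choosing $C = C(\alpha)$ large enough that $a^{C} > e^{c}$ (with $c$ the trap-cost constant above) guarantees that the expected number of trap-successes among the first $N_{\lfloor Cn\rfloor}$ steps is $\ge c'' (a^{C}/e^{c})^{n} \to \infty$ exponentially fast.

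Finally I would assemble the pieces: on the good event (which has probability $\ge 1 - e^{-\tilde c n}$) there are at least $M_n := c'' a^{Cn}$ peeling steps, and the trap-successes at distinct steps are independent Bernoulli$(p)$ variables with $p \ge p' e^{-cn}$ conditionally on the past, so the probability that none of them succeeds is at most $(1-p)^{M_n} \le \exp(-p' e^{-cn} c'' a^{Cn}) = \exp(-c'' p' (a^C e^{-c})^n) \to 0$. Intersecting with the good event, $\H(\cE_n \text{ occurs before revealing } B_{\lfloor Cn\rfloor}) \ge 1 - e^{-\tilde c n} - \exp(-c''p'(a^C e^{-c})^n) \to 1$. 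The main obstacle, and the only place requiring real care, is making the independence across peeling steps precise — one must condition on the history $P_k$, note that whether step $k+1$ is of type $(R,2)$ and, if so, which finite triangulation fills the enclosed $2$-gon, is determined by fresh randomness independent of $P_k$ (domain Markov property), and check that counting ``attempts'' this way is compatible with the stopping time $N_{\lfloor Cn\rfloor}$ (handled by noting $\{N_{\lfloor Cn\rfloor}\ge M_n\}$ is decided before step $M_n$, so one may simply expose the first $M_n$ steps unconditionally and intersect). A secondary technical point is ensuring the trap, once revealed inside an $(R,2)$ step, really is a valid configuration with no extra vertices, which is exactly the content of the weight formula in \cref{def:free} with $q=\alpha\beta$.
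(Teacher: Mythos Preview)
Your proposal is correct and follows essentially the same strategy as the paper's proof: use the domain Markov property and \cref{lem:prob} to see that each peeling step independently reveals a trap of order $n$ with probability at least $e^{-c''n}$ (so the waiting time is geometric), use the volume lower bound of \cref{L:vol} together with the finite-mean (indeed exponential-tail) per-step increments to show that at least $e^{cCn}$ peeling steps are needed to reveal $B_{\lfloor Cn\rfloor}$ except with exponentially small probability, and then choose $C$ large enough. One inconsequential slip: an $(R,2)$ step encloses a $3$-gon rather than a $2$-gon, and the trap of order $n$ has a number of faces linear in $n$ rather than exactly $n$, but you already note that linearity is all that matters.
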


\begin{proof}
  \cref{L:vol} shows that $|B_n|$ is at least $e^{cn}$ with exponentially
  high probability for some small enough $c>0$. Now recall that the
  increments in the volume of the revealed triangulation in the peeling
  steps are i.i.d. with finite expectation. Hence an application of
  Markov's inequality shows that the number of steps needed to reveal the
  hull of radius $n$ is at least $\exp(cn)$ with probability at least
  $1-\exp(-c'n)$.

  The domain Markov property and \cref{lem:prob} imply that the number of
  steps needed until $\mathcal E_n$ occurs is a geometric variable with
  probability of success at least $\exp(-c''n)$ for some constant $c''>0$
  depending only on $\alpha$. The lemma now follows by choosing $C$ large
  enough depending upon $c''$.
\end{proof}

\begin{corollary}\label{cor:trap_far_near}
  With exponentially high probability there exists a trap of order $n$ at
  distance at least $n$ and at most $Cn$ from the root for large
  enough $C$.
\end{corollary}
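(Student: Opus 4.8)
The plan is to read the corollary off from \cref{lem:revealing_trap} together with the internal geometry of a trap, so that essentially no new probabilistic input is needed. First, I would note that the proof of \cref{lem:revealing_trap} in fact delivers an exponential bound, not merely convergence to $1$: revealing $B_{\lfloor Cn\rfloor}$ requires at least $e^{c_1 Cn}$ peeling steps with probability $1-e^{-c_2 n}$ (using \cref{L:vol} and that the vertex-count increment of a peeling step has finite mean, exactly as in that proof), while by the domain Markov property and \cref{lem:prob} the number of peeling steps until $\cE_n$ first occurs is geometric with success probability at least $e^{-c_3 n}$; choosing $C$ large enough that $c_1 C > c_3$, the chance of not encountering $\cE_n$ among the first $e^{c_1 Cn}$ steps is at most $e^{-e^{(c_1C-c_3)n}}$, so $\cE_n$ occurs before $B_{\lfloor Cn\rfloor}$ is revealed with probability at least $1-e^{-cn}$ for some $c>0$.

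Next I would condition on $\cE_n$. On this event a trap of order $n$, with nested triangles $\Delta_0,\dots,\Delta_n$, appears as the region enclosed by the triangle revealed in that $(R,2)$ peeling step; here $\Delta_0$ is the outermost triangle, the only one of the trap joined to the rest of $H$, and its vertices are incident to the revealed $(R,2)$ triangle, which was peeled at a boundary edge of the partially revealed hull. Since that partial hull is contained in the hull of $B_{\lfloor Cn\rfloor}$, the boundary edge in question lies at distance at most $Cn$ from $\rho$, hence every vertex of $\Delta_0$ is at distance at most $Cn+O(1)$ from $\rho$. For the lower distance bound I would use the two defining features of a trap: only $\Delta_0$ is adjacent to the rest of the map, and consecutive triangles are joined solely by single edges with no vertices between them (see \cref{fig:trap}). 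Consequently any path in $H$ from $\rho$ to a vertex of $\Delta_k$ must enter the trap through $\Delta_0$ and then cross the $k$ successive gaps separating $\Delta_{j-1}$ from $\Delta_j$ for $j=1,\dots,k$, so it has length at least $k$; in particular every vertex of the innermost triangle $\Delta_n$ is at distance at least $n$ from $\rho$. Since the diameter of the trap is at most $2n+O(1)$, every vertex of the trap is within distance $Cn+2n+O(1)\le(C+3)n$ of $\rho$ for all large $n$. Relabelling $C+3$ as $C$ gives, with probability $1-e^{-cn}$, a trap of order $n$ all of whose vertices lie within distance $Cn$ of $\rho$ and which contains a vertex (any vertex of $\Delta_n$) at distance at least $n$ from $\rho$, which is the assertion.

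I do not expect a genuine obstacle. The two points deserving a word of care are (i) upgrading \cref{lem:revealing_trap} from ``probability tending to $1$'' to ``exponentially high probability'', which is already implicit in its proof and only needs to be recorded, and (ii) the distance bookkeeping inside the trap, namely that the $n$ nested gaps between consecutive triangles each contribute at least one to the graph distance and cannot be bypassed --- immediate once one notes that nothing lies between consecutive triangles and that only $\Delta_0$ meets the remainder of the map, so that a geodesic from $\rho$ to a deep vertex of the trap is forced through every intermediate triangle.
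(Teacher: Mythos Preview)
Your exponential upgrade of \cref{lem:revealing_trap} and your upper distance bound are fine, and indeed you make explicit what the paper's two-line proof leaves implicit. The gap is in the lower distance bound. The phrase ``a trap at distance at least $n$ from the root'' means the nearest vertex of the trap is at distance $\geq n$, i.e.\ the whole trap lies outside $B_n$; this is exactly how it is used in the proof of \cref{lem:A_n} (``since the trap is outside $B_n$'', and the walker must travel to a vertex $v$ of triangle $1$ along a path of length $\geq n$ so that the $\gamma$-bad bound from the third item of $\A_n$ applies). Your argument only shows that the innermost triangle $\Delta_n$ is at distance $\geq n$: if $\cE_n$ happens to occur at one of the very first peeling steps, then $\Delta_0$ and $\Delta_1$ sit at distance $O(1)$ from $\rho$, and the trap you have found is not ``at distance at least $n$'' in the required sense.

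The paper's fix is a one-line use of the domain Markov property that you are missing: first reveal $B_n$, then re-root the complement $H\setminus B_n$ on an edge of its boundary and apply \cref{lem:revealing_trap} there. Any trap found this way lies entirely in $H\setminus B_n$, hence every vertex of it is at distance $>n$ from $\rho$; the upper bound follows since the new root is at distance $\leq n$ from $\rho$ and the trap appears before the $Cn$-hull around it. Your internal-geometry argument for $\Delta_n$ is then unnecessary. So the missing idea is precisely this conditioning step; once you insert it, your write-up (including the explicit exponential bound) is correct and in fact more detailed than the paper's.
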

\begin{proof}
Condition on $B_n$. Now root $H \setminus B_n$ on an edge in the
exposed boundary of $B_n$ and appeal to domain Markov property and \cref{lem:revealing_trap}. 
\end{proof}

\subsection{Lower bound: getting to a trap}

We still need to show that the probability of reaching a trap at a distance
$\ell$ from the root is at least $\exp(-c\ell)$ for some $c>0$.  We can
estimate the probability of the simple random walk reaching the trap by
moving along a given geodesic joining the root and the trap.  If the
degrees of the vertices along such a path are $d_0,\dots,d_{\ell-1}$ then
the probability of following the path is
\begin{equation}
  \prod d_i^{-1} \geq \left(\frac{\ell}{d_0+\dots+d_{\ell-1}}\right)^\ell
  \label{eq:AMGM} 
\end{equation}
(by the A-G mean inequality). For this reason we prove the following lemma
about average degrees along paths in $H$.  Call a simple path in $H$ {\bf
  $\gamma$-bad} if the average of the degrees of vertices along the path is greater
than $\gamma$.

\begin{lemma}\label{lem:gamma_bad}
  There exists a constant $\gamma>0$ depending only on $\alpha$ such that for
  all $n$ the probability that there exists a $\gamma$-bad path of length
  $n$ in $H$ starting from $\rho$, avoiding $\partial H$ except at $\rho$
  is at most $e^{-cn}$.
\end{lemma}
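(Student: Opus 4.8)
The plan is to reduce the statement to a first–moment estimate on the number of $\gamma$-bad paths and then carry out that estimate by peeling along such a path. Write $Z_n$ for the number of simple paths $(\rho = x_0, x_1, \dots, x_n)$ in $H$ of length $n$ starting at $\rho$ and avoiding $\partial H$ except at $\rho$ which are $\gamma$-bad, i.e.\ satisfy $\sum_{i=0}^n \deg(x_i) > \gamma n$. It suffices to show $\E Z_n \le e^{-cn}$ for a suitable $\gamma = \gamma(\alpha)$, since $\{\exists\ \gamma\text{-bad path}\}=\{Z_n\ge 1\}$. The point of the exponential-Markov trick is to handle simultaneously the branching of the family of paths and the size of the degree sum: for $\lambda>0$,
\[
\mathbbm 1\Big\{\sum_{i=0}^n\deg(x_i)>\gamma n\Big\}\ \le\ e^{-\lambda\gamma n}\prod_{i=0}^n e^{\lambda\deg(x_i)},
\]
so that $\E Z_n \le e^{-\lambda\gamma n}\,G_\lambda(n)$, where $G_\lambda(n) := \E\big[\sum_{(\rho,x_1,\dots,x_n)}\prod_{i=0}^n e^{\lambda\deg(x_i)}\big]$, the sum being over all simple paths of length $n$ from $\rho$ (dropping the constraint on $\partial H$ only enlarges the quantity).

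The main work is to prove a recursion $G_\lambda(n)\le K_\lambda\, G_\lambda(n-1)$ with $K_\lambda<\infty$ for all small enough $\lambda>0$. To do this I would peel in $H$ to reveal the hull $P_1$ of the radius-$1$ ball around $\rho$; by the domain Markov property $H\setminus P_1$ has law $\H$ and is independent of $P_1$, and $P_1$ determines $\deg(\rho)$ as well as the candidate set for $x_1$, whose cardinality is at most $\deg(\rho)\le |B_1|$. Conditionally on $P_1$, each choice of $x_1$ lies on the boundary of the fresh $\H$-map $H\setminus P_1$, with only a bounded number of its edges already buried inside $P_1$; hence by translation invariance the contribution of the remaining path $(x_1,\dots,x_n)$ has conditional expectation at most a constant times $G_\lambda(n-1)$. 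Summing over the $\le|B_1|$ choices and controlling $e^{\lambda\deg(\rho)}$ and the buried edges of the $x_1$'s by $e^{c\lambda|B_1|}$ yields
\[
G_\lambda(n)\ \le\ \E\!\big[|B_1|\,e^{c\lambda|B_1|}\big]\cdot G_\lambda(n-1)\ =:\ K_\lambda\,G_\lambda(n-1).
\]
By \cref{lem:1ball}, $|B_1|$ has an exponential tail, so $K_\lambda<\infty$ once $\lambda$ is small enough, and iterating gives $G_\lambda(n)\le C K_\lambda^{\,n}$. Consequently $\E Z_n\le C\,(K_\lambda e^{-\lambda\gamma})^n$; fixing $\lambda$ small and then choosing $\gamma=\gamma(\alpha)$ large enough that $K_\lambda e^{-\lambda\gamma}<e^{-1}$ gives $\P(\exists\ \gamma\text{-bad path of length }n)\le \E Z_n\le C e^{-n}$, and adjusting $c$ handles small $n$.

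The step I expect to be the main obstacle is making the recursion genuinely rigorous: when the peeling exploration first reaches $x_i$ its degree in $H$ is not yet determined, and one must argue that $\deg_H(x_i)$ is at most a bounded quantity plus the number of edges revealed when peeling the radius-$1$ hull of $x_i$ in the current unexplored region, the latter increments being i.i.d.\ with exponential tail by the domain Markov property. One must also make sure the candidate vertices for each successive step are enumerated in a way compatible with the revealed information so that the domain Markov property genuinely applies at each stage; both issues are handled by always working with radius-$1$ hulls and invoking \cref{lem:1ball} for the tail of the freshly revealed part. The remaining ingredients — the exponential-Markov inequality and the geometric sum of the recursion — are routine.
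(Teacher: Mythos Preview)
Your first-moment reduction via the exponential-Markov inequality is sound, and $G_\lambda(n)$ is the right object to bound. The genuine gap is the recursion $G_\lambda(n)\le K_\lambda\,G_\lambda(n-1)$. After revealing $P_1$ and fixing $x_1$ on its exposed boundary, you claim the contribution of the remaining path $(x_1,\dots,x_n)$ is at most a constant times $G_\lambda(n-1)$ in the fresh map $H'=H\setminus P_1$. But the remaining path lives in $H$, not in $H'$: it may visit further vertices on the exposed boundary of $P_1$, or even interior vertices of $P_1$, and each such visit carries $\deg_H$, which exceeds $\deg_{H'}$ by a buried part that is \emph{not} universally bounded and is \emph{not} absorbed by the single factor $e^{c\lambda|B_1|}$ you attach to $x_1$ alone. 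Both the degree excess and the extra branching it creates go the wrong way for an upper bound. The fix you sketch at the end (peel the $1$-hull of each $x_i$ in turn) is in the right direction, but making it rigorous requires tracking, at every step, the buried degree of $x_i$ against the entire explored region and the possibility that the path dives back into already-revealed territory; this bookkeeping is substantial and is not carried out.

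The paper avoids the recursion entirely with a one-shot argument that exploits the explicit form of $\H_\alpha$. It biases $\H$ by $\#\cP_n$ to get a measure $\wH^{(n)}$ on pairs $(H,P_n)$, so the expected number of $\gamma$-bad paths equals $\wH^{(n)}(P_n\text{ is }\gamma\text{-bad})$. The key identity, immediate from \cref{lem:prob}, is $\wH^{(n)}=(\alpha/\beta)^n\,\H\circ\Cut$, where $\Cut$ slits $H$ open along $P_n$; under $\Cut$ the degree sum along $P_n$ becomes, up to $2n$, the weight of a fixed boundary segment of length $2n-1$ in a fresh half-plane map. \cref{lem:hull1} gives that weight an exponential tail with any rate $b$, and choosing $b>\tfrac12\log(\alpha/\beta)$ beats the Radon--Nikodym factor. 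So the paper trades your inductive peeling for a single bijective surgery; your route is more generic in spirit, but as written the inductive step does not close.
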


Before proving this let us introduce some notations. For any $n \ge 1$ and
given an instance of $H$, let $\cP_n(H)$ denote the set of simple paths of
length $n$ in $H$ starting from $\rho$ and avoiding $\partial H$ except at
$\rho$.  Let $\H^{(n)}$ be the measure defined by its Radon-Nykodim
derivative given by $\frac{d\H^{(n)}}{d\H} = \# \cP_n$. Note that $\H^{(n)}$ is not a
  probability measure, but has total mass $\E\left(\#\cP_n\right)$.

Let $\wH^{(n)}$ be the measure of the pair $(H_n,P_n)$ where $H$ has law
$\H^{(n)}$ and $P_n$ is a uniformly picked path from $\cP_n$. Given a pair
$(H_n,P_n)$ let $\Cut(H_n,P_n)$ be the map obtained by cutting $H_n$ along
$P_n$ as shown in \cref{fig:zip}. Observe that since $P_n$ is a simple path
avoiding the boundary, $\Cut(H_n,P_n)$ is a half planar triangulation.
Note that not every map may result from this procedure: since $H_n$ has no
self-loops, $\Cut(H_n,P_n)$ cannot have an edge between boundary vertices
that come from the same vertex of $P_n$.

\begin{figure}[t]
  \centering{\includegraphics[width=0.75 \textwidth]{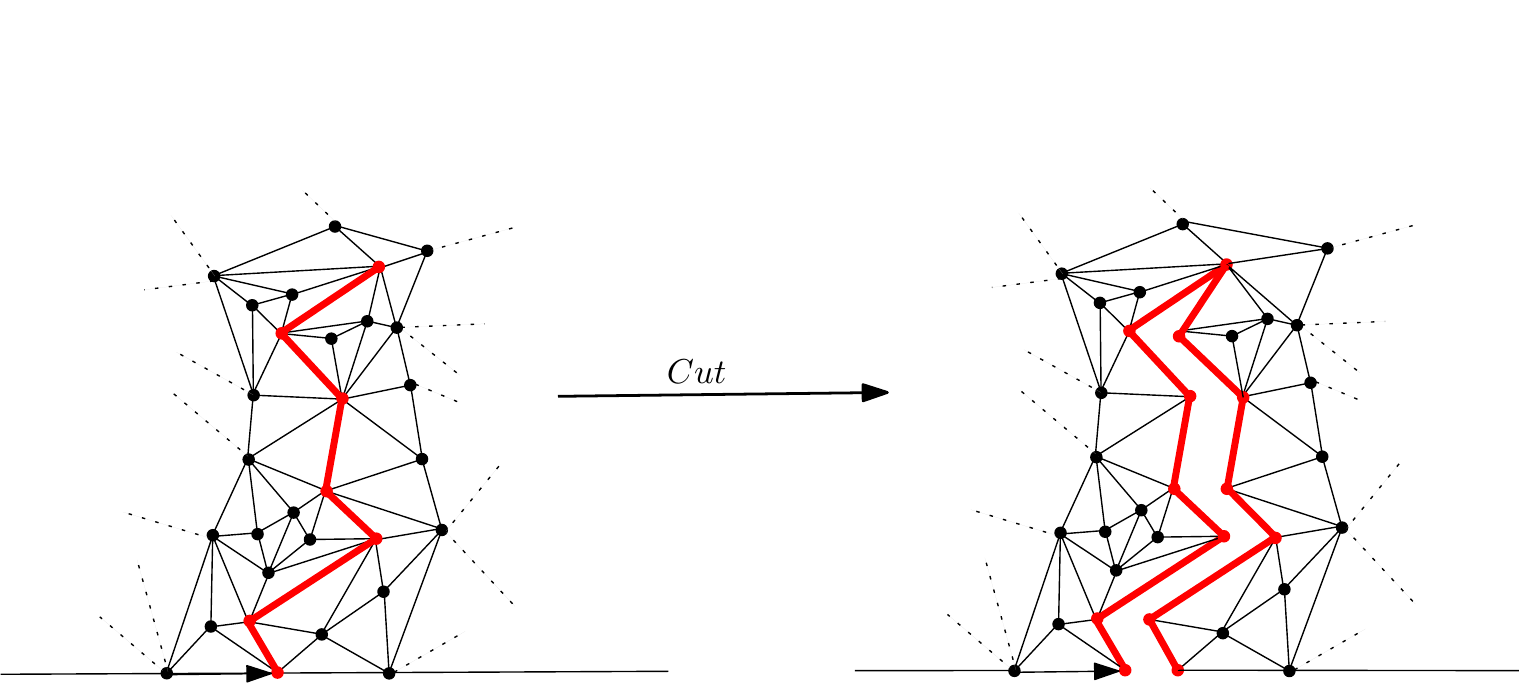}}
  \caption{The operation $\Cut$ takes a map with a simple path avoiding the
    boundary (in red) and produces a half plane map by cutting along the
    path}
  \label{fig:zip}
\end{figure}

A direct consequence of \cref{lem:prob} is that
\begin{equation}
  \wH^{(n)} = \left (\frac\alpha\beta\right)^n \H \circ \Cut.
  \label{eq:relation}
\end{equation}
This is so since the probability of any simple event $Q\subset H$ is in
agreement.  The factor $(\alpha/\beta)^n$ above appears because $\Cut$
turns $n$ internal vertices of the map into boundary vertices.

\begin{proof}[Proof of \cref{lem:gamma_bad}]
  The expected number of $\gamma$-bad paths of length $n$ is given by
  $\wH^{(n)}\left(P_n \text{ is $\gamma$-bad }\right)$. Thus it suffices to
  prove the exponential bound on this quantity.  Now, the sum of the
  degrees of the vertices in $P_n$ is $2n$ less than the sum of the degrees
  of the vertices in a segment of length $2n-1$ along the boundary of
  $\Cut(H_n,P_n)$, just to the right of the root.  The lemma and the choice
  of $\gamma$ follow by \eqref{eq:relation} and \cref{lem:hull1}, taking
  $b>\log(\alpha/\beta)/2$.
\end{proof}

For positive constants $\gamma,C_1,C_2$ consider the event $\A_n$:
\begin{itemize}
\item There exists a trap of order $n$ at distance at least $n$ and at most
  $C_1 n$ from the root,
\item $| B_{\lfloor C_1n \rfloor}\cap \partial H|\le C_2n$.
\item There does not exist a $\gamma$-bad simple path of length at least
  $n$ starting from any boundary vertex $v$ in $B_{\lfloor C_1n \rfloor}$
  avoiding $\partial H$ except at $v$.

\end{itemize}

\begin{lemma}\label{lem:A_n_prob}
  There exist choices of positive constants $\gamma,C_1,C_2$ depending only
  on $\alpha$ such that $\H(\A_n) \geq 1-e^{-cn}$ for some $c>0$.
\end{lemma}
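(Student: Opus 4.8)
The plan is to bound, by $e^{-cn}$, the probability that each of the three bullets defining $\A_n$ fails, and then to conclude by a union bound; this yields $\H(\A_n^c)\le e^{-cn}$ for all large $n$, hence (after decreasing $c$) for all $n\ge 1$. The constants are chosen as follows, all depending only on $\alpha$: $C_1$ is at least the threshold constant appearing in \cref{cor:trap_far_near}; $\gamma$ is the constant furnished by \cref{lem:gamma_bad} (enlarged if necessary past the maximal degree occurring in a trap, which only strengthens \cref{lem:gamma_bad}); and $C_2 := t'C_1 + 1$ with $t'=t'(\alpha)$ from \cref{cor:ball_boundary}.

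The first bullet is precisely the content of \cref{cor:trap_far_near} for this $C_1$. For the second bullet, we apply \cref{cor:ball_boundary} with radius $\lfloor C_1 n\rfloor$ in place of $n$: with probability at least $1-e^{-cn}$ we have $|B_{\lfloor C_1 n\rfloor}\cap\partial H|\le t'C_1 n\le C_2 n$.

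The third bullet requires a little more care, since \cref{lem:gamma_bad} only controls $\gamma$-bad paths emanating from the root $\rho$, whereas here we must rule them out from every boundary vertex lying in $B_{\lfloor C_1 n\rfloor}$ — a \emph{random} collection of vertices. Enumerate the boundary as $\{v_i\}_{i\in\Z}$ with $v_0=\rho$. The third bullet can fail only if for some $i$ both $d^H(v_i,\rho)\le C_1 n$ and there is a $\gamma$-bad simple path of some length $\ell\ge n$ starting at $v_i$ and avoiding $\partial H$ except at $v_i$. Set $M:=\lceil C_1 n/t\rceil$ with $t=t(\alpha)$ from \cref{prop:dist_along_bdry} and split the union over $i$ at $|i|=M$. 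For $|i|>M$ we have $C_1 n<t|i|$, so \cref{prop:dist_along_bdry} bounds $\H(d^H(v_i,\rho)\le C_1 n)$ by $e^{-c|i|}$, and $\sum_{|i|>M}e^{-c|i|}\le e^{-c'n}$. For $|i|\le M$, translation invariance together with \cref{lem:gamma_bad}, summed over all lengths $\ell\ge n$ (which only costs a geometric-series factor), bounds by $Ce^{-cn}$ the probability that $v_i$ carries a $\gamma$-bad path of length at least $n$; there are $2M+1=O(n)$ such $i$, so the union over them is at most $O(n)e^{-cn}\le e^{-c'n}$ for $n$ large. Note that the trap produced by the first bullet is not an obstruction: all degrees inside a trap are bounded, so a simple path wandering into it only lowers the average degree and cannot be $\gamma$-bad; moreover, since $\A_n$ is a conjunction, the three bullet-complements are estimated separately and unconditionally, so no conditioning on the trap occurs.

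Summing the three estimates completes the proof. The one spot requiring genuine care is the third bullet, where \cref{lem:gamma_bad} must be transported from the root to all (randomly many) boundary vertices within distance $C_1 n$; this is handled above by enveloping that random set inside the deterministic set $\{v_{-M},\dots,v_M\}$ via \cref{prop:dist_along_bdry} and then taking a union bound over its $O(n)$ elements.
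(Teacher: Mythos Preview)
Your proof is correct and follows the same three-part union-bound strategy as the paper. The only difference is in the handling of the third bullet: the paper intersects with the second event (so that at most $C_2 n$ boundary vertices need be considered) and then invokes translation invariance and \cref{lem:gamma_bad}, whereas you bypass the second event and instead use \cref{prop:dist_along_bdry} directly to envelope the random set $B_{\lfloor C_1 n\rfloor}\cap\partial H$ inside the deterministic window $\{v_{-M},\dots,v_M\}$ before union-bounding; both devices yield the same $O(n)\cdot e^{-cn}$ estimate, and your version is arguably the more explicit of the two.
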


\begin{proof}
Exponential bound on the complement of the first event follows from \cref{cor:trap_far_near}.
The exponential bound on the complement on second event above follows from \cref{cor:ball_boundary}.
 On the second event, the bound on the third event follows from \cref{lem:gamma_bad}, translation invariance and union bound.
\end{proof}

\begin{lemma}\label{lem:A_n}
  On the event $\A_n$, for $t\in[n^3,(n+1)^3]$ we have 
  \[
  \bP_H \left(X_t = \rho \right) \ge e^{-cn}
  \]
  for some $c>0$.
\end{lemma}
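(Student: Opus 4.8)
The plan is to decompose the return trajectory into three stages: travel from $\rho$ to the trap, wait inside the trap for the appropriate number of steps, and return from the trap to $\rho$. Work on the event $\A_n$, so that a trap $\Gamma$ of order $n$ exists at graph distance $\ell\in[n,C_1n]$ from $\rho$; fix a geodesic $v_0=\rho,v_1,\dots,v_\ell$ from $\rho$ to (a vertex of triangle $1$ of) the trap. For the first stage I would bound $\bP_H$ of the event that $X$ follows this geodesic step by step during its first $\ell$ steps. Crucially, the geodesic can be taken to avoid $\partial H$ except possibly at $\rho$ — if it dips into the boundary we replace the offending piece, or alternatively note that the event $\A_n$'s third bullet is stated for paths from boundary vertices in $B_{\lfloor C_1n\rfloor}$, so we can route around; in any case the third bullet of $\A_n$ guarantees no $\gamma$-bad path of length $\ge n$ emanating from a boundary vertex inside $B_{\lfloor C_1n\rfloor}$, and a short separate argument (or an extension of \cref{lem:gamma_bad} to paths from $\rho$ itself, which is exactly the stated hypothesis) controls the average degree along our geodesic. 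Then by the A-G mean inequality \eqref{eq:AMGM}, the probability of following the geodesic is at least $(\ell/(d_0+\dots+d_{\ell-1}))^\ell \ge \gamma^{-\ell}\cdot(\text{const})^\ell \ge e^{-c\ell} \ge e^{-cC_1 n}$, absorbing constants into $c$.

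For the middle stage, once the walk sits at a vertex of triangle $1$ of the trap, \cref{cor:trap_delay} (the consequence of \cref{SRWestimate} and the $\{-1,0,1\}$ random walk comparison) gives that for any $s>n^3/2$ the walk is again at triangle $1$ at time $s$ with probability at least $e^{-cs/n^2}$. We will use this with $s = t-2\ell$, and since $t\in[n^3,(n+1)^3]$ and $\ell\le C_1n$ we have $s\ge n^3 - 2C_1 n > n^3/2$ for large $n$, and $s/n^2 \le (n+1)^3/n^2 \le cn$; so this stage contributes at least $e^{-cn}$. The third stage is the reverse of the first: from triangle $1$ the walk retraces the geodesic back to $\rho$ in exactly $\ell$ steps, which again costs at least $e^{-cC_1n}$ by the same average-degree bound applied to the reversed path (the degrees are the same). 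Multiplying the three lower bounds and using the Markov property at the two intermediate times gives
\[
\bP_H(X_t=\rho) \ge e^{-c\ell}\cdot e^{-c(t-2\ell)/n^2}\cdot e^{-c\ell} \ge e^{-cn},
\]
since each exponent is $O(n)$ on $\A_n$ with $t\le (n+1)^3$.

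The main obstacle is the first/third stage: ensuring the geodesic from $\rho$ to the trap has controlled average degree. \cref{lem:gamma_bad} as stated controls $\gamma$-bad paths starting from $\rho$ and avoiding $\partial H$ except at $\rho$, which is exactly what a geodesic into the trap (whose vertices are internal, being inside a triangle revealed by an $(R,2)$-step) should be — but one must check the geodesic genuinely avoids $\partial H$, or handle the initial boundary portion by hand. One clean fix: let $v$ be the first vertex of triangle $0$ of the trap reached by the geodesic; since the trap is glued to the bulk only through triangle $0$, and triangle $0$ itself sits at distance $\ge n$ from $\rho$, the geodesic cannot touch $\partial H$ after leaving a small neighborhood of $\rho$ (by \cref{prop:dist_along_bdry}, boundary vertices near $\rho$ are at large graph distance, so a geodesic of length $\le C_1 n$ to a far point does not return to $\partial H$); this lets us apply \cref{lem:gamma_bad} directly. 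Everything else — the trap-delay estimate and the Markov-property bookkeeping — is routine given \cref{cor:trap_delay} and \cref{lem:A_n_prob}.
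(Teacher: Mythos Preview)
Your three–stage decomposition and the use of \cref{cor:trap_delay} for the middle stage match the paper exactly, and the bookkeeping $t-2\ell\in[n^3/2,(n+1)^3]$ is fine. The gap is in stage one (and symmetrically stage three): you try to argue that the geodesic from $\rho$ to the trap can be taken to avoid $\partial H$, and your ``clean fix'' invoking \cref{prop:dist_along_bdry} is incorrect. That proposition says that boundary vertices \emph{far along the boundary} are at large graph distance from $\rho$; it says nothing preventing the geodesic from passing through boundary vertices $v_k$ with small $|k|$, which are at small graph distance from $\rho$. There is no reason a geodesic of length $\le C_1 n$ should avoid, say, $v_1$ or $v_2$. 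So the hypothesis of \cref{lem:gamma_bad} (path avoids $\partial H$ except at its start) is not available for the whole geodesic, and you have no bound on the degrees along its boundary portion.

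The paper does not try to avoid the boundary. Instead it lets $u$ be the \emph{last} boundary vertex on the geodesic, replaces the initial segment of the geodesic by the boundary segment $\Gamma_1$ from $\rho$ to $u$, and keeps the remainder $\Gamma_2$ from $u$ to the trap. Now $\Gamma_2$ avoids $\partial H$ except at $u$, and since $u\in B_{\lfloor C_1 n\rfloor}\cap\partial H$, the \emph{third} bullet of $\A_n$ gives $|\Gamma_2|\le \gamma C_1 n$. The degree sum along $\Gamma_1$ is controlled by the \emph{second} bullet of $\A_n$, since every vertex of $\Gamma_1$ lies in $B_{\lfloor C_1 n\rfloor}\cap\partial H$, so $|\Gamma_1|\le C_2 n$. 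This is exactly the role of the second bullet, which you quote but never use. With both pieces bounded, the A--G mean inequality~\eqref{eq:AMGM} gives the $e^{-cn}$ cost for following $\Gamma$ (and for returning along it), and the rest of your argument goes through.
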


\begin{proof}
  On the event $\mathcal A_n$, consider a geodesic path joining the root
  and a vertex $v$ in triangle $1$ of the promised trap.  Suppose the
  geodesic path hits $\partial H$ for the last time at $u$. Let $\Gamma_1$
  be the segment of the boundary from $\rho$ to $u$, and $\Gamma_2$ the
  segment of the geodesic from $u$ to the trap. Let $\Gamma$ be their
  concatenation, and let $\ell$ denote the length of $\Gamma$. We wish to
  show that with probability at least $\exp(-cn)$ the walker moves along
  $\Gamma$ to $v$, spends $t-2\ell$ steps in the trap such that at the end
  of these steps it is again at $v$, and then returns to the root along
  $\Gamma$.

  Notice that on the event $\A_n$ we have $|\Gamma_1|\leq C_2 n$ and
  $|\Gamma_2| \leq \gamma\cdot C_1 n$, since the length of $\Gamma_2$ is at
  most $C_1 n$. Thus $|\Gamma| \leq (\gamma C_1+C_2)n$.  Also, $n\leq \ell
  \leq (C_1+C_2)n$, since the trap is outside $B_n$ and by the choice of
  $\Gamma$. Thus the average degree along $\Gamma$ is at most $\gamma
  C_1+C_2$, and the probability that the simple random walk moves from the
  root to the $v$ along $\Gamma$ is or similarly comes back from the vertex
  $v$ to the root along $\Gamma$ has probability at least $(\gamma
  C_1+C_2)^{-(C_1+C_2)n}$. The probability of returning along $\Gamma$ is
  the same. Finally \cref{cor:trap_delay} (by plugging in $t-2\ell$)
  ensures that with probability at least $\exp(-cn)$, the random walker
  stays inside the trap for $t-2\ell$ steps such that at the end of these
  steps it is in $v$.
\end{proof}

\begin{proof}[Proof of \cref{T:return_prob} lower bound]
  Follows from \cref{lem:A_n,lem:A_n_prob} and an application of
  Borel-Cantelli lemma.
\end{proof}

\section{Comments and Open questions}\label{sec:open}

\paragraph*{Existence of the speed.} 
First, since the ergodic theorem does not apply directly, it is not obvious
whether the speed actually exists, as it does in the full plane map.  Is
there an almost sure constant speed for the simple random walk in $H$?  In
other words, does the sequence $d(X_n,\rho)/n$ converge almost surely to
some constant? In a similar token: does the speed away from the boundary
exist and is constant?  One approach would be to study renewal times, at
which the walker reaches a new distance from the boundary, and never
descends below that distance subsequently.  If these renewal times exist
and are sufficiently small, it would follow that the distance from the
boundary has an almost sure asymptotic speed and even that it has Brownian
fluctuations.  The distance from the root seems less accessible.

\paragraph*{Dependence of $\alpha$.}
If the speed $s_\alpha>0$ exists for simple random walk on a map with law
$\H_\alpha$ is it increasing in $\alpha$? Does $s_\alpha \to 0$ as $\alpha
\to 2/3$ and $s_\alpha \to 1$ as $\alpha \to 1$? Is the speed continuous in
$\alpha$? These questions are open also for the full plane hyperbolic maps,
for which it is known that the speed exists.

\paragraph*{Coupling as $\alpha\to 2/3$.}
Is the half plane UIPT recurrent? It is shown in \cref{lem:coupling} that
we can obtain a coupling such that $H$ is almost surely a sub-map of its
full plane version $F$.  Does there exist a similar coupling between
half-plane and full-plane UIPT?  This would imply in particular that the
half plane UIPT is recurrent since the UIPT is recurrent \cite{GN12}.
Existing couplings have the property that the distance of $H$ from the root
of $F$ tends to infinity as $\alpha\to 2/3$.  Can this be avoided?  One
approach which is known to fail is to take two copies of the half-plane
UIPTs and glue them along the boundary (Nicolas Curien, personal
communication).

\bibliographystyle{abbrv}
\bibliography{rwsuper}

\def\cprime{$'$}
\begin{thebibliography}{10}

\bibitem{UIPT2}
O.~Angel.
\newblock Growth and percolation on the uniform infinite planar triangulation.
\newblock {\em Geom. Funct. Anal.}, 13(5):935--974, 2003.

\bibitem{AC13}
O.~Angel and N.~Curien.
\newblock Percolations on random maps {I}: half-plane models.
\newblock arXiv:1301.5311, 2013.

\bibitem{AR13}
O.~Angel and G.~Ray.
\newblock Classification of half planar maps.
\newblock {\em Ann. Probab. to appear.}, 2013.
\newblock arXiv:1303.6582.

\bibitem{UIPT1}
O.~Angel and O.~Schramm.
\newblock Uniform infinite planar triangulations.
\newblock {\em Comm. Math. Phys.}, 241(2-3):191--213, 2003.

\bibitem{BK06}
M.~T. Barlow and T.~Kumagai.
\newblock Random walk on the incipient infinite cluster on trees.
\newblock {\em Illinois J. Math.}, 50(1-4):33--65 (electronic), 2006.

\bibitem{BC11}
I.~Benjamini and N.~Curien.
\newblock Simple random walk on the uniform infinite planar quadrangulation:
  subdiffusivity via pioneer points.
\newblock {\em Geom. Funct. Anal.}, 23(2):501--531, 2013.

\bibitem{BLS99}
I.~Benjamini, R.~Lyons, and O.~Schramm.
\newblock Percolation perturbations in potential theory and random walks.
\newblock In {\em Random walks and discrete potential theory ({C}ortona,
  1997)}, Sympos. Math., XXXIX, pages 56--84. Cambridge Univ. Press, Cambridge,
  1999.

\bibitem{BeSc}
I.~Benjamini and O.~Schramm.
\newblock Recurrence of distributional limits of finite planar graphs.
\newblock {\em Electron. J. Probab.}, 6:no. 23, 1--13, 2001.

\bibitem{bjornberg2013recurrence}
J.~E. Bj{\"o}rnberg and S.~O. Stefansson.
\newblock Recurrence of bipartite planar maps.
\newblock ArXiv:1311.0178, 2013.

\bibitem{C85}
T.~K. Carne.
\newblock A transmutation formula for {M}arkov chains.
\newblock {\em Bull. Sci. Math. (2)}, 109(4):399--405, 1985.

\bibitem{curien13glimpse}
N.~Curien.
\newblock A glimpse of the conformal structure of random planar maps.
\newblock ArXiv:1308.1807, 2013.

\bibitem{PSHIT}
N.~Curien.
\newblock Planar stochastic hyperbolic infinite triangulations.
\newblock arXiv:1401.3297, 2014.

\bibitem{Dodziuk84}
J.~Dodziuk.
\newblock Difference equations, isoperimetric inequality and transience of
  certain random walks.
\newblock {\em Trans. Amer. Math. Soc.}, 284(2):787--794, 1984.

\bibitem{GFbook}
I.~P. Goulden and D.~M. Jackson.
\newblock {\em Combinatorial enumeration}.
\newblock A Wiley-Interscience Publication. John Wiley \& Sons Inc., New York,
  1983.
\newblock With a foreword by Gian-Carlo Rota, Wiley-Interscience Series in
  Discrete Mathematics.

\bibitem{GN12}
O.~Gurel-Gurevich and A.~Nachmias.
\newblock Recurrence of planar graph limits.
\newblock {\em Ann. of Math. (2)}, 177(2):761--781, 2013.

\bibitem{HSC93}
W.~Hebisch and L.~Saloff-Coste.
\newblock Gaussian estimates for {M}arkov chains and random walks on groups.
\newblock {\em Ann. Probab.}, 21(2):673--709, 1993.

\bibitem{kestensubdiff}
H.~Kesten.
\newblock Subdiffusive behavior of random walk on a random cluster.
\newblock {\em Ann. Inst. H. Poincar\'e Probab. Statist.}, 22(4):425--487,
  1986.

\bibitem{GN09}
G.~Kozma and A.~Nachmias.
\newblock The {A}lexander-{O}rbach conjecture holds in high dimensions.
\newblock {\em Invent. Math.}, 178(3):635--654, 2009.

\bibitem{MCMT}
D.~A. Levin, Y.~Peres, and E.~L. Wilmer.
\newblock {\em Markov chains and mixing times}.
\newblock American Mathematical Society, Providence, RI, 2009.
\newblock With a chapter by James G. Propp and David B. Wilson.

\bibitem{LP:book}
R.~Lyons and Y.~Peres.
\newblock {\em Probability on Trees and Networks}.
\newblock Cambridge University Press, 2014.
\newblock In preparation, available at \hfil\break {\tt
  http://mypage.iu.edu/\string~rdlyons/}.

\bibitem{menard2013perc}
L.~M{\'e}nard and P.~Nolin.
\newblock Percolation on uniform infinite planar maps.
\newblock arXiv:1302.2851, 2013.

\bibitem{Mohar88}
B.~Mohar.
\newblock Isoperimetric inequalities, growth, and the spectrum of graphs.
\newblock {\em Linear Algebra Appl.}, 103:119--131, 1988.

\bibitem{Ray13}
G.~Ray.
\newblock Geometry and percolation on half planar triangulations.
\newblock {\em Electron. J. Probab.}, 19(47):1--28, 2014.

\bibitem{T92}
C.~Thomassen.
\newblock Isoperimetric inequalities and transient random walks on graphs.
\newblock {\em Ann. Probab.}, 20(3):1592--1600, 1992.

\bibitem{V85}
N.~Varopoulos.
\newblock Long range estimates for markov chains.
\newblock {\em Bulletin des sciences math{\'e}matiques}, 109(3):225--252, 1985.

\bibitem{V91}
N.~T. Varopoulos.
\newblock Groups of superpolynomial growth.
\newblock In {\em Harmonic analysis ({S}endai, 1990)}, ICM-90 Satell. Conf.
  Proc., pages 194--200. Springer, Tokyo, 1991.

\bibitem{B00}
B.~Vir{\'a}g.
\newblock Anchored expansion and random walk.
\newblock {\em Geom. Funct. Anal.}, 10(6):1588--1605, 2000.

\bibitem{Wat}
Y.~Watabiki.
\newblock Construction of non-critical string field theory by transfer matrix
  formalism in dynamical triangulation.
\newblock {\em Nuclear Phys. B}, 441(1-2):119--163, 1995.

\end{thebibliography}

\noindent {\sc Omer Angel, Asaf Nachmias, Gourab Ray} \\
Dept.\ of Mathematics, UBC. \\
{\tt angel@math.ubc.ca, asafnach@gmail.com, gourab1987@gmail.com}

\end{document}